\declaretheoremstyle[
notefont=\bfseries, notebraces={}{},
bodyfont=\normalfont,
postheadspace=0.5em,
numbered=no,
]{mystyle}
\declaretheoremstyle[
notefont=\bfseries, notebraces={}{},
bodyfont=\normalfont,
postheadspace=0.5em,
numbered=no,
]{mystyle}
\declaretheoremstyle[
notefont=\bfseries, notebraces={}{},
bodyfont=\normalfont,
postheadspace=0.5em,
numbered=no,
]{mystyle}
\declaretheorem[style=mystyle]{Corollary}
\renewcommand\Re{\operatorname{\mathfrak{Re}}}
\renewcommand\Im{\operatorname{\mathfrak{Im}}}
\newcommand{\Cset}{\mathop{\mathds{C}}}
\newcommand{\Nset}{\mathop{\mathds{N}}}
\newcommand{\Zset}{\mathop{\mathds{Z}}}
\newcommand{\Rset}{\mathop{\mathds{R}}}
\newcommand{\Dset}{\mathop{C_{\scriptscriptstyle{\!\Rset\rightarrow\Cset}}^{\infty}}}
\providecommand{\OO}[1]{\operatorname{O}\left(#1\right)}
\newcommand{\s}[1]{{s}_{#1}}
\newcommand{\ds}[1]{{d}_{#1}}
\newcommand{\zt}[1]{\mathop{a_{#1}}}
\newcommand{\ztt}[1]{\mathop{b_{#1}}}
\newcommand{\am}[1]{|\zt{#1}|}
\newcommand{\amt}[1]{|\ztt{#1}|}
\newcommand{\Te}[1]{{\mathop{\Phi_{#1}}}}
\newcommand{\sn}[1]{{\mathop{\Im(\zt{#1})}}}
\newcommand{\cs}[1]{{\mathop{\Re(\zt{#1})}}}
\newcommand{\mult}[1]{{\mathop{\mu(#1)}}}
\newcommand{\snt}[1]{{\mathop{\Im(\ztt{#1})}}}
\newcommand{\cst}[1]{{\mathop{\Re(\ztt{#1})}}}
\newcommand{\del}[1]{{\mathop{\Delta_{{#1}}}}}
\newcommand{\PP}[2]{P_{#1}(#2)}
\newcommand{\K}[1]{\ensuremath{{\mathcal{K}}^{#1}}}
\newcommand{\dx}{\mathrm{d}}
\newcommand{\dd}{{\mathop{{D}}}}
\newcommand{\e}{\mathop{{\mathrm{e}}}}
\newcommand{\ii}{\mathop{\dot{\imath}}}
\newcommand{\CC}{\ensuremath{\mathcal{L}}}
\newcommand{\LIM}[1]{\lim_{#1\rightarrow\infty}}
\newcommand{\g}[1]{\gamma_{#1}}
\newcommand{\res}[2]{\mathrm{Res}\left(#1;\, #2\right)}
\newtheorem{theorem}{Theorem}
\newtheorem{corollary}[theorem]{Corollary}
\newtheorem{conjecture}[theorem]{Conjecture}
\newtheorem{lemma}[theorem]{Lemma}
\newtheorem{definition}{Definition}
\begin{document}


\title[{ASYMPTOTIC BEHAVIOR OF FAMILIES OF ORTHONORMAL POLYNOMIALS}]{Asymptotic behaviour of some families of orthonormal polynomials and an associated Hilbert space}

\author[A.~IGNJATOVIC]{Aleksandar Ignjatovi\'{c }}

\thanks{This paper is dedicated to my wife Sharon Younghi Choi; without her love and patience this work would have never seen daylight. I also want to thank Jeff Geronimo, Doron Lubinsky, Paul Nevai, Vilmos Totik and especially the anonymous referees for their most valuable comments which have greatly improved this article.}

\address{School of Computer Science and Engineering, University of New South Wales, Sydney, Australia; email: ignjat@cse.unsw.edu.au}

\markboth{}{}

\maketitle

\begin{abstract} 
We characterise asymptotic behaviour of families of symmetric orthonormal polynomials whose recursion coefficients satisfy certain conditions, satisfied for example by the (normalised) Hermite polynomials. More generally, these conditions are satisfied by the recursion coefficients of the form $c(n+1)^p$ for $0<p<1$ and $c>0$, as well as by recursion coefficients which correspond to polynomials orthonormal with respect to the exponential weight $W(x)=\exp(-|x|^\beta)$ for $\beta>1$.  We use these results to show that, in a Hilbert space defined  in a natural way by such a family of  orthonormal polynomials, every two complex exponentials $e_{\omega}(t)={\e}^{\ii \omega t}$ and $e_{\sigma}(t)={\e}^{\ii \sigma t}$ of distinct frequencies $\omega,\sigma$ are mutually orthogonal. We finally formulate a surprising conjecture for the corresponding families of  non-symmetric orthonormal polynomials; extensive numerical tests indicate that such a conjecture appears to be true. \\

\noindent\textbf{keywords:} orthogonal polynomials, unbounded recurrence coefficients, Christoffel functions, almost periodic functions, signal processing\\
\textbf{AMS classification numbers:} 42C05,  41A60,  42A75
\end{abstract}


\section{Introduction}Let $\g{n}>0$ for $n\geq 0$  be the recursion coefficients that correspond to a symmetric positive definite family of  orthonormal polynomials $(p_n:n\in\Nset)$. Thus, $p_0(\omega)=1$ and if we set $\g{-1}=1$ and $p_{-1}(\omega)=0$, then the three term recurrence  
\begin{equation}\label{poly}
{\g{n}}p_{n+1}(\omega)={\omega}\, p_{n}(\omega)-{\g{n-1}}\, p_{n-1}(\omega)
\end{equation}
holds for all $n\geq 0$.\footnote{See, for example, \cite{GF}.}
Let also $\s{n}$ be the first and $\ds{n}$ the second order forward finite differences of these recursion coefficients:
\begin{align*}
&\s{n}=\g{n+1}-\g{n}; && \ds{n}=\s{n+1}-\s{n}. &
\end{align*}
We consider families of orthonormal polynomials such that the corresponding recursion coefficients $\gamma_n$ satisfy the following conditions. %
\newlist{UR}{enumerate}{3}
\setlist[UR]{label=($\mathcal{C}_{\arabic*})$}
\begin{UR}
\begin{multicols}{2}
\item\label{c2} $\g{n}\rightarrow \infty$;\\
\item\label{c3} $\s{n}\rightarrow 0;$
\end{multicols}
\item \label{c1} There exist $n_0, m_0$ such that  $\g{n+m}>\g{n}$ holds for  all $n\geq n_0$ and all $m\geq m_0$.\\~\\
A sequence $\gamma_n$ which satisfies condition \ref{c1} will be called an 
\emph{almost increasing sequence}; an \emph{almost decreasing sequence} is defined in an analogous way. Clearly, every increasing sequence is also an almost increasing sequence with $n_0=0$ and $m_0=1$.
\begin{multicols}{2}
\item\label{c7} $\displaystyle{\sum_{j=0}^{\infty}\frac{1}{\gamma_j}}=\infty$; 
\item\label{c8} there exists  $\kappa>1$ such that $\ \displaystyle{\sum_{j=0}^{\infty}\frac{1}{\gamma_j^\kappa}}<\infty$;
\end{multicols}
\begin{multicols}{2}
    \item\label{c5} $\displaystyle{\sum_{n=0}^{\infty}\frac{|\s{n}|}{\gamma_n^2}<\infty;}$
    \item\label{c6} $\displaystyle{\sum_{n=0}^{\infty}\frac{|\ds{n}|}{\g{n}}<\infty.}$
    \end{multicols}
\end{UR} 

Note that if the Hermite polynomials are normalised into a corresponding orthonormal family with respect to the weight $W(x)=\e^{-x^2}/\sqrt{\pi}$, then their recursion coefficients are of the form 
$\g{n}=\left(n+1\right)^{{1}/{2}}/\sqrt{2}$.

\begin{lemma}\label{lemma0} Conditions \ref{c2}-\ref{c6} are satisfied  by the Hermite polynomials, and more generally,  
\begin{enumerate}
\item[(a)] by  families with recursion coefficients of the form $\g{n}=c(n+1)^p$ for any $0<p<1$ and $c>0$;
\item[(b)] by families orthonormal with respect to the exponential weight $W(\omega)=\exp(-c|\omega|^\beta)$ for $\beta>1$ and $c>0$.
\end{enumerate}
\end{lemma}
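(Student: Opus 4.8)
The plan is to verify conditions \ref{c2}--\ref{c6} separately for the two families, starting with part (a) since the recursion coefficients $\g{n}=c(n+1)^p$ are given by an explicit closed form, and then reducing part (b) to part (a) by invoking known asymptotics for recursion coefficients associated with exponential weights. For part (a), I would compute the finite differences directly: by the mean value theorem (or a Taylor expansion of $x\mapsto x^p$), $\s{n}=c((n+2)^p-(n+1)^p)=\OO{n^{p-1}}$ and $\ds{n}=\OO{n^{p-2}}$. Condition \ref{c2} is immediate since $p>0$, and \ref{c3} follows from $p-1<0$. Condition \ref{c1} holds with $n_0=0$, $m_0=1$ because $(n+1)^p$ is strictly increasing. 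For \ref{c7}, $\sum 1/\g{j}=c^{-1}\sum (j+1)^{-p}=\infty$ because $p<1$; for \ref{c8}, pick any $\kappa$ with $1<\kappa<1/p$ (possible since $p<1$), so that $\kappa p>1$ and $\sum \g{j}^{-\kappa}<\infty$. For \ref{c5}, $|\s{n}|/\g{n}^2=\OO{n^{p-1}\cdot n^{-2p}}=\OO{n^{-1-p}}$, which is summable since $p>0$; for \ref{c6}, $|\ds{n}|/\g{n}=\OO{n^{p-2}\cdot n^{-p}}=\OO{n^{-2}}$, which is summable.

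For part (b), the approach is to quote the precise asymptotic expansions of the recursion coefficients $\g{n}$ for polynomials orthonormal with respect to $W(\omega)=\exp(-c|\omega|^\beta)$, $\beta>1$, which are available in the literature (e.g.\ the results of Lubinsky--Saff, Levin--Lubinsky, and Kriecherbauer--McLaughlin). These give $\g{n}=C_\beta n^{1/\beta}\bigl(1+\OO{n^{-1}}\bigr)$ for an explicit constant $C_\beta>0$, together with comparable expansions for the first and second differences, namely $\s{n}=\OO{n^{1/\beta-1}}$ and $\ds{n}=\OO{n^{1/\beta-2}}$. Writing $p=1/\beta\in(0,1)$, conditions \ref{c2}, \ref{c3}, \ref{c7}, \ref{c8} then follow exactly as in part (a) (using $1<\kappa<\beta$ for \ref{c8}), and \ref{c5}, \ref{c6} follow from the same exponent bookkeeping as above, the error terms only improving the estimates. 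Condition \ref{c1} requires knowing that $\g{n}$ is \emph{eventually} increasing, or at least almost increasing; this follows from the leading-order asymptotics $\g{n}\sim C_\beta n^{1/\beta}$ together with the bound on $\s{n}$, since for $n$ large and $m$ large the gain $C_\beta((n+m)^{1/\beta}-n^{1/\beta})$ dominates the accumulated error.

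The main obstacle is part (b): everything hinges on having sufficiently sharp asymptotics for the recursion coefficients of exponential-weight orthonormal polynomials --- not merely the leading term $\g{n}\sim C_\beta n^{1/\beta}$, but enough control on the differences $\s{n}$ and $\ds{n}$ to make \ref{c5} and \ref{c6} go through, and enough to establish the almost-increasing property \ref{c1}. The leading asymptotics alone do not obviously give \ref{c6}, which involves the second difference divided by $\g{n}$; one needs an expansion of the form $\g{n}=C_\beta n^{1/\beta}+\OO{n^{1/\beta-1}}$ with the error term itself behaving smoothly under differencing, so that $\ds{n}=\OO{n^{1/\beta-2}}$. I would therefore cite the strongest available results (the Riemann--Hilbert analyses of Kriecherbauer--McLaughlin for $\beta$ an even integer, and of Levin--Lubinsky / Vanlessen more generally) which do provide complete asymptotic expansions, and extract the needed bounds on $\s{n}$ and $\ds{n}$ from them. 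If such uniform difference estimates are not directly quotable for all $\beta>1$, a fallback is to treat the case of even integer $\beta$ via Kriecherbauer--McLaughlin and the general case by the Freud-type bounds, noting that even the weaker estimates $\s{n}=o(1)$ and $\sum|\ds{n}|/\g{n}<\infty$ can be read off from the regularity of the Mhaskar--Rakhmanov--Saff numbers.
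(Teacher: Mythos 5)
Your part (a) is essentially the paper's own argument: the same difference estimates $\s{n}=\OO{n^{p-1}}$, $\ds{n}=\OO{n^{p-2}}$ and the same exponent bookkeeping for \ref{c5} and \ref{c6}. One slip: for \ref{c8} you must take $\kappa>1/p$ (so that $\kappa p>1$), not $1<\kappa<1/p$; as written, your choice gives $\kappa p<1$ and $\sum\g{j}^{-\kappa}=\infty$. Since $1/p>1$, any $\kappa>1/p$ is admissible, which is exactly what the paper states.

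For part (b) your overall strategy (quote Riemann--Hilbert asymptotics for the recurrence coefficients of $\exp(-c|\omega|^\beta)$) is the paper's strategy, but there is a genuine gap in how you extract \ref{c5} and \ref{c6}. You posit that the literature supplies the pointwise difference rates $\s{n}=\OO{n^{1/\beta-1}}$ and $\ds{n}=\OO{n^{1/\beta-2}}$, you concede these may not be quotable for all $\beta>1$, and your fallback (reading $\sum|\ds{n}|/\g{n}<\infty$ off the ``regularity of the Mhaskar--Rakhmanov--Saff numbers'') is not a proof: regularity of a smooth comparison sequence says nothing about the second differences of $\g{n}$ itself without a quantitative error bound. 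The missing observation, which is how the paper closes the argument, is that no difference-smoothness of the error is needed. The paper cites Theorem 1.3 of \cite{DKMVZ}, which gives
\begin{equation*}
\frac{\g{n}}{(n+1)^{1/\beta}}=\frac{1}{2}+\OO{(n+1)^{-\beta}}+\OO{(n+1)^{1-2\beta}}+\OO{(n+1)^{-2}},
\end{equation*}
and then differences only the explicit main term $\tfrac12(n+1)^{1/\beta}$, bounding the first and second differences of the error terms crudely by their absolute values (no gain of $n^{-1}$). Because the relative errors $\OO{n^{-\beta}}+\OO{n^{1-2\beta}}+\OO{n^{-2}}$ are already summable when $\beta>1$, this crude bound is enough: one gets $|\s{n}|/\g{n}^2=\OO{n^{-1-1/\beta}}+\OO{n^{-\beta-1/\beta}}+\OO{n^{1-2\beta-1/\beta}}+\OO{n^{-2-1/\beta}}$ and $|\ds{n}|/\g{n}=\OO{n^{-\beta}}+\OO{n^{1-2\beta}}+\OO{n^{-2}}$, both summable, while \ref{c2}, \ref{c3}, \ref{c7}, \ref{c8} and the almost-increasing property \ref{c1} follow from the leading term dominating the (vanishing faster than $n^{1/\beta-1}$ per step) error fluctuations. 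In short, the obstacle you flagged as ``the main obstacle'' dissolves once you notice that \ref{c6} asks only for summability of $|\ds{n}|/\g{n}$, not for the rate $\ds{n}=\OO{n^{1/\beta-2}}$; as submitted, your part (b) leaves that step contingent on results you have not verified and on a fallback that does not deliver it.
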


\begin{proof}\emph{(a)}\ \  If $p>0$ then $\g{n}$ are increasing and $\g{n}\rightarrow \infty$; moreover, since for $\g{n}=c(n+1)^p$ all forward finite differences $\Delta^k(n)$ satisfy  $\Delta^k(n)=\OO{n^{p-k}}$, we obtain 
\begin{align*}  
\sum_{n=0}^{\infty}\frac{\s{n}}{\gamma_n^2}&=
\sum_{n=0}^{\infty}\OO{\frac{n^{p-1}}{n^{2p}}}=\OO{\sum_{n=0}^{\infty}n^{-p-1}}<\infty.
\end{align*}
On the other hand, if $p<1$ then  $\s{n}=\OO{n^{p-1}}\rightarrow 0$ and \ref{c7} holds; also, 
\begin{align}  
\sum_{n=0}^{\infty}{\ds{n}}&=\OO{\sum_{n=0}^{\infty}n^{p-2}}<\infty.\label{con15}
\end{align}
Note that \eqref{con15} is stronger than what is required by condition \ref{c6}.
Finally,  if $0<p<1$ then \ref{c8} holds for every $\kappa>1/p$. \\

\emph{(b)}\  Theorem 1.3 in \cite{DKMVZ} implies that for such a weight and for $\beta>1$ the recurrence coefficients satisfy 
\begin{equation*}
\frac{\gamma_n}{(n+1)^{1/\beta}}=\frac{1}{2}+\OO{(n+1)^{-\beta}}+\OO{(n+1)^{1-2\beta}}+\OO{(n+1)^{-2}}.
\end{equation*}
This in turn is easily seen to imply 
\begin{align*}\begin{split}
\frac{\s{n}}{\g{n}^2}=\OO{(n+1)^{-1-\frac{1}{\beta}}}+\OO{(n+1)^{-\beta-\frac{1}{\beta}}}+\OO{(n+1)^{1-2\beta-\frac{1}{\beta}}}+\\\OO{(n+1)^{-2-\frac{1}{\beta}}};
\end{split}
\end{align*}
\begin{align*}
\frac{\ds{n}}{\g{n}}&=\OO{(n+1)^{-\beta}}+\OO{(n+1)^{1-2\beta}}+\OO{(n+1)^{-2}}.
\end{align*}
The above three equations, together with the fact that $\beta>1$, imply \emph{(b)}.

\end{proof}
The goal of this paper is to prove the following theorem and obtain, as its consequences,  the two corollaries below, as well as  Theorem~\ref{HA}.\footnote{Theorem~\ref{HA} proves our conjecture from \cite{I1} under some additional assumptions.}

\begin{theorem}\label{t1}
Assume that the recursion coefficients $\g{n}$ which correspond to a symmetric positive definite family of orthonormal polynomials $(p_n:n\in\Nset)$ satisfy conditions \ref{c2}-\ref{c6}; then the limit $\lim_{n\rightarrow \infty}\g{n}(p^2_{n}(\omega)+p^2_{n+1}(\omega))$ exists for every $\omega\in\Rset$; moreover, for every $B>0$ there exist $m_B$ and $M_B$ such that $0<m_B<M_B<\infty$ and such that  for all $\omega$ which satisfy $|\omega|\leq B$,
\begin{equation}\label{eqbounds}
m_B\leq\lim_{n\rightarrow \infty}\g{n}(p^2_{n}(\omega)+p^2_{n+1}(\omega)) \leq M_B
\end{equation}
with the limit converging uniformly on the set of all $\omega$ such that $|\omega|\leq B$.
\end{theorem}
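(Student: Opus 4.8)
The plan is to study the quantity $A_n(\omega)=\g{n}(p_n^2(\omega)+p_{n+1}^2(\omega))$ by deriving a recursion for it and showing the increments are summable uniformly on compact sets. First I would compute $A_{n+1}(\omega)-A_n(\omega)$ using the three-term recurrence \eqref{poly}. Substituting $\g{n}p_{n+1}=\omega p_n-\g{n-1}p_{n-1}$ into $A_{n+1}$ and expanding, one expects the leading terms to cancel because the recursion is ``energy preserving'' to first order, leaving an expression of the schematic form
\begin{equation*}
A_{n+1}(\omega)-A_n(\omega)=\s{n}\,p_{n+1}^2(\omega)+\s{n-1}\,(\text{quadratic in }p_n,p_{n+1})+\frac{\omega\,(\text{cross terms})}{\g{n}}+\cdots,
\end{equation*}
where every surviving term carries either a factor $\s{n}$, a factor $\ds{n}$, or a factor $1/\g{n}$ times something that telescopes. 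The natural auxiliary quantity to track alongside $A_n$ is the ``cross term'' $p_n(\omega)p_{n+1}(\omega)$ (or $\g{n}p_np_{n+1}$), since the off-diagonal part of the transfer matrix feeds into the diagonal part; so I would set up a coupled pair of recursions for $A_n$ and $C_n=\g{n-1}p_{n-1}(\omega)p_n(\omega)$ or similar.

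The key analytic input is that $p_n(\omega)$ and $p_{n+1}(\omega)$ cannot both be large: more precisely, I would first establish a crude \emph{a priori} bound $p_n^2(\omega)+p_{n+1}^2(\omega)=\OO{1/\g{n}}$, i.e. $A_n(\omega)=\OO{1}$, uniformly for $|\omega|\le B$. This should follow from a Gronwall-type argument: from the recursion for $A_n$ one gets $A_{n+1}\le A_n(1+\varepsilon_n)$ with $\sum\varepsilon_n<\infty$ on account of conditions \ref{c2}, \ref{c3}, \ref{c5}, \ref{c6}, hence $A_n$ stays bounded. (This is where \ref{c7} and \ref{c8} presumably enter — \ref{c8} via $\sum 1/\g{n}^\kappa<\infty$ to control terms like $\omega^2/\g{n}^2$ summed against $A_n$, and \ref{c7} to pin down the lower bound, see below.) Once $A_n$ is known to be bounded, the increment formula shows $\sum_n|A_{n+1}(\omega)-A_n(\omega)|$ converges, bounded uniformly for $|\omega|\le B$, because each term is (bounded)$\times(|\s{n}|/\g{n}$-type or $|\ds{n}|/\g{n}$-type or $1/\g{n}^2$-type) summable sequence. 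Therefore $A_n(\omega)$ is uniformly Cauchy on $\{|\omega|\le B\}$ and the limit exists and is continuous, giving the upper bound $M_B$ and uniform convergence.

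For the strictly positive lower bound $m_B>0$, I would argue that the limit cannot vanish at any $\omega$. One route: $A_n(\omega)=0$ would force $p_n(\omega)=p_{n+1}(\omega)=0$ at the limit, but the Christoffel--Darboux/Turán-type determinant $\g{n}(p_{n+1}^2-p_np_{n+2})$ — or equivalently the Wronskian of the two solutions of the recurrence — is bounded below, and \ref{c7} (divergence of $\sum 1/\g{j}$) guarantees that the recurrence is of limit-point type in a way that prevents $A_n$ from decaying to $0$; combined with continuity of the limit in $\omega$ and compactness of $\{|\omega|\le B\}$, the infimum $m_B$ is attained and positive. Alternatively one shows directly that $\liminf_n A_n(\omega)>0$ by exhibiting the quantity $A_n(\omega)+(\text{a controlled multiple of the cross term})$ as \emph{almost non-increasing} from below, using \ref{c1} (almost monotonicity) to handle the sign of the $\s{n}$ contributions.

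The main obstacle I anticipate is the algebra and bookkeeping in the increment formula: getting the cancellation of the $\OO{1}$ terms to be exact and then correctly grouping the remainder so that every leftover term is visibly controlled by one of \ref{c3}, \ref{c5}, \ref{c6}, possibly after one summation by parts to convert a sum of $\s{n}\times(\text{oscillatory})$ into a sum involving $\ds{n}$. A secondary obstacle is making the lower bound $m_B>0$ genuinely uniform and not merely pointwise; this is where conditions \ref{c7} and \ref{c1} must be used in an essential (not just cosmetic) way, and I expect that is the subtlest point of the proof.
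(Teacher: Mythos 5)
Your strategy---showing that the increments of $A_n(\omega)=\g{n}(p_n^2(\omega)+p_{n+1}^2(\omega))$ are absolutely summable, uniformly on $|\omega|\le B$---breaks down at its central claim, namely that every surviving term in $A_{n+1}-A_n$ is of ``$|\s{n}|/\g{n}$-type, $|\ds{n}|/\g{n}$-type or $1/\g{n}^2$-type'' and hence summable. The hypotheses only give $\sum 1/\g{n}^\kappa<\infty$ for \emph{some} $\kappa>1$ (condition \ref{c8}), not for $\kappa=2$: already for the Hermite case $\g{n}=\sqrt{(n+1)/2}$ one has $\sum 1/\g{n}^2=\infty$, and for $\g{n}=c(n+1)^p$ with small $p$ the sum $\sum 1/\g{n}^{K}$ diverges for every $K<1/p$. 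When the expansion of $A_{n+1}-A_n$ is actually carried out, the terms of size $\OO{\omega^2/\g{n}^2}$ do not vanish; they survive multiplied by oscillatory factors $\cos(2\Te{n})$, where $\Te{n}$ is the accumulated phase of $p_{2n}(\omega)+\ii\,p_{2n+1}(\omega)$, whose increments are $\sim\omega/\g{2n-1}$ and which tends to infinity precisely because of \ref{c7}. So the series converges only \emph{conditionally}, through cancellation of oscillations, and an absolute-summability/Gronwall argument cannot even yield boundedness of $A_n$: your estimate $A_{n+1}\le A_n(1+\varepsilon_n)$ has $\varepsilon_n\asymp 1/\g{n}^2$, whose sum diverges. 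The lower bound suffers from the same problem---the Wronskian route needs an upper bound for the second solution, which is the same hard question---whereas in a correct argument both bounds come simultaneously from convergence of $\ln A_n$ to a finite limit.

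This is exactly why the paper's proof takes a different shape: it passes to $E_n=(-1)^n(p_{2n}+\ii\,p_{2n+1})$, derives the two-term recurrence $E_n=\zt{n}E_{n-1}+\ztt{n}\overline{E_{n-1}}$, and writes $\ln|E_n|$ as a sum of terms $\ln\mult{j}$ evaluated at the phases $2\Te{j-1}$. The finite-difference contributions (your $\s{n}$- and $\ds{n}$-terms) are indeed absolutely summable by \ref{c5} and \ref{c6} and are stripped off first; the remaining, non-absolutely-summable part is expanded in a Fourier series in the phase variable, with coefficients that are almost monotone in $n$ and of size $\OO{\g{2n-1}^{-|m|}}$, so that each harmonic is compared with an integral of an almost-monotonically damped sinusoid (this is where \ref{c1} and \ref{c7} are used essentially), and the Riemann-sum errors are re-expanded and the procedure iterated until all terms are $\OO{\g{2n-1}^{-\kappa}}$ and \ref{c8} applies. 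To salvage your more elementary plan you would have to replace ``absolutely summable increments'' by a summation-by-parts or van der Corput type treatment of the oscillatory $\OO{1/\g{n}^2}$ part, with almost-monotonicity of the amplitudes---which, in effect, reconstructs the paper's machinery rather than avoiding it.
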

Note that this is in contrast with the case when the recursion coefficients are bounded; a classical result 
of Nevai implies that in the bounded case the sequence $\g{n}(p^2_{n}(\omega)+p^2_{n+1}(\omega))$ cannot converge; see \cite{PN}, page 140, formula (16). 
So it is not surprising that in our case, the slower the $\g{n}$ grow to infinity, the slower the above limit converges.

\begin{corollary}\label{corollary1}
Let $\gamma_n$ be as in Theorem~\ref{t1}; then the limits below exist and satisfy
\begin{equation}\label{eqsum1}
\lim_{n\rightarrow \infty}
\frac{\sum_{k=0}^np^2_k(\omega)}{\sum_{k=0}^n\frac{1}{\gamma_k}}=\frac{1}{2}\lim_{n\rightarrow \infty}\g{n}(p^2_{n}(\omega)+p^2_{n+1}(\omega))
\end{equation}
and convergence of the two limits is uniform on every compact set.
\end{corollary}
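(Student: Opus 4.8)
The plan is to deduce the statement from Theorem~\ref{t1} by a Stolz–Cesàro (discrete L'Hôpital) argument. Write $P_n(\omega)=\sum_{k=0}^n p_k^2(\omega)$ and $S_n=\sum_{k=0}^n 1/\gamma_k$; by condition \ref{c7} we have $S_n\to\infty$, so the Stolz–Cesàro theorem applies provided the ratio of increments converges. Thus the first step is to observe
\[
\frac{P_n(\omega)-P_{n-1}(\omega)}{S_n-S_{n-1}}=\gamma_n\,p_n^2(\omega),
\]
so that it suffices to show $\gamma_n p_n^2(\omega)$ converges, and to identify its limit as half the limit in Theorem~\ref{t1}.

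The key step is therefore to pass from the convergence of $\gamma_n(p_n^2(\omega)+p_{n+1}^2(\omega))$ (given by Theorem~\ref{t1}) to the convergence of $\gamma_n p_n^2(\omega)$ alone, with limit exactly half as large. The natural route is to show that the "off-diagonal'' contribution is asymptotically the same as the diagonal one; concretely, I would try to prove that
\[
\gamma_n\bigl(p_{n+1}^2(\omega)-p_n^2(\omega)\bigr)\longrightarrow 0 .
\]
Factoring $p_{n+1}^2-p_n^2=(p_{n+1}-p_n)(p_{n+1}+p_n)$ and using the recurrence \eqref{poly} to write $\gamma_n(p_{n+1}-p_n)=\omega p_n-\gamma_{n-1}p_{n-1}-\gamma_n p_n = \omega p_n - (\gamma_n-\gamma_{n-1})p_n - \gamma_{n-1}(p_{n-1}+p_n)+\gamma_{n-1}p_n-\gamma_n p_n$ — more cleanly, $\gamma_n(p_{n+1}-p_n)=\omega p_n-\gamma_{n-1}(p_{n-1}+p_n)+(\gamma_{n-1}-\gamma_n)p_n$ — and then bounding each term. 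Here $\gamma_{n-1}-\gamma_n=-s_{n-1}\to 0$ by \ref{c3}; the quantities $\sqrt{\gamma_n}\,|p_n(\omega)|$ and $\sqrt{\gamma_{n-1}}\,|p_{n-1}(\omega)|$ are bounded (uniformly for $|\omega|\le B$) by Theorem~\ref{t1}; so after dividing appropriately by powers of $\gamma_n$ and using $\gamma_n\to\infty$ from \ref{c2}, each term tends to $0$. Combining $\gamma_n(p_{n+1}^2+p_n^2)\to L(\omega)$ with $\gamma_n(p_{n+1}^2-p_n^2)\to 0$ gives $\gamma_n p_n^2(\omega)\to L(\omega)/2$, and then Stolz–Cesàro yields \eqref{eqsum1}.

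The main obstacle is the uniformity claim on compact sets, and making the estimate $\gamma_n(p_{n+1}^2-p_n^2)\to 0$ genuinely uniform in $\omega$ for $|\omega|\le B$ rather than merely pointwise. For this I would keep careful track of the bounds: the bound $M_B$ in Theorem~\ref{t1} is uniform on $|\omega|\le B$, so $\sqrt{\gamma_n}|p_n(\omega)|\le \sqrt{M_B}$ uniformly, and the $\omega$-dependence in the recurrence manipulation above enters only through the explicit factor $\omega$, which is bounded by $B$. The remaining subtlety is that Stolz–Cesàro preserves uniform convergence only if one has a uniform rate on the increment ratios; since the bounds just described are all uniform in $\omega$ and the auxiliary sequences $s_{n-1}\to 0$, $1/\gamma_n\to 0$ do not depend on $\omega$ at all, a standard $\varepsilon$-$N$ version of Stolz–Cesàro (choosing $N$ uniformly, then absorbing the fixed finite head into a term that is $O(1)/S_n\to 0$ uniformly because $|p_k(\omega)|$ is bounded on compacts) transfers the uniformity. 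I would state this $\varepsilon$-$N$ version explicitly and apply it to close the argument.
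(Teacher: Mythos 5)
There is a genuine gap: your key intermediate claim, that $\gamma_n\bigl(p_{n+1}^2(\omega)-p_n^2(\omega)\bigr)\to 0$ (equivalently, that $\gamma_n p_n^2(\omega)$ converges to $L(\omega)/2$), is false in general. The paper's own representation makes this visible: $p_{2n}(\omega)=(-1)^n|E_n(\omega)|\cos\Phi_n$ and $p_{2n+1}(\omega)=(-1)^n|E_n(\omega)|\sin\Phi_n$, with $\gamma_{2n}|E_n(\omega)|^2\to L(\omega)>0$, while by \eqref{dlt} and \eqref{Te} the phase $\Phi_n$ tends to infinity with increments tending to $0$. Hence $\gamma_{2n}p_{2n}^2(\omega)\approx L(\omega)\cos^2\Phi_n$ sweeps slowly through essentially all values in $[0,L(\omega)]$ and has no limit, and $\gamma_n\bigl(p_{n+1}^2-p_n^2\bigr)\approx -L(\omega)\cos 2\Phi_n$ does not tend to $0$ (for the Hermite case this is the familiar Plancherel--Rotach oscillation: consecutive phases differ by roughly $\pi/2$, which is exactly why the \emph{sum} of two consecutive squares stabilises while each square oscillates). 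Your attempted proof of the claim also cannot close: in the factorisation $\gamma_n(p_{n+1}^2-p_n^2)=\bigl[\omega p_n-\gamma_{n-1}(p_{n-1}+p_n)+(\gamma_{n-1}-\gamma_n)p_n\bigr](p_{n+1}+p_n)$, the term $\gamma_{n-1}(p_{n-1}+p_n)(p_{n+1}+p_n)$ is only $\OO{1}$, since each factor $p_k$ is of size $\gamma_n^{-1/2}$, and it genuinely does not vanish. Consequently the increment ratio $\gamma_n p_n^2(\omega)$ in your single-step Stolz--Ces\`aro application does not converge, so the hypothesis of Stolz--Ces\`aro is not met in the form you use it.

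The repair is the one the paper uses: apply Stolz--Ces\`aro to the sums grouped in \emph{pairs}, i.e.\ to increments $p_{2j}^2(\omega)+p_{2j+1}^2(\omega)$ over increments $\frac{1}{\gamma_{2j}}+\frac{1}{\gamma_{2j+1}}$, whose ratio converges to $L(\omega)$ directly by Theorem~\ref{t1} together with \eqref{even}; since $\sum_k 1/\gamma_k$ diverges by \ref{c7}, this gives \eqref{eqsum1} along odd partial sums. For even partial sums one does not need convergence of $\gamma_n p_n^2$: it suffices to note that the single leftover term satisfies $0\le p_{2k+2}^2(\omega)\big/\sum_{j\le 2k+2}\gamma_j^{-1}\le \gamma_{2k+2}\bigl(p_{2k+2}^2(\omega)+p_{2k+3}^2(\omega)\bigr)\big/\sum_{j\le 2k+2}\gamma_j^{-1}\to 0$, using the uniform bound $M_B$ from Theorem~\ref{t1} and divergence of the denominator. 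Your remarks on transferring uniformity through an $\varepsilon$--$N$ version of Stolz--Ces\`aro are sound and would apply to this paired version verbatim, since the paired increment ratios converge uniformly on $|\omega|\le B$ by Theorem~\ref{t1}; but as written, your argument hinges on a limit that does not exist.
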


\begin{corollary}\label{corollary2} If 
 $\gamma_n=c(n+1)^p$ for some $c>0$ and some $p$ such that $0<p<1$, then 
 \[
\displaystyle{
0<\lim_{n\rightarrow\infty}\frac{\sum_{k=0}^np^2_k(\omega)}{(n+1)^{1-p}}<\infty,
}
\]
and convergence of the limit  is uniform on every compact set.
\end{corollary}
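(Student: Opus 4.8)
The plan is to derive Corollary~\ref{corollary2} as a direct combination of Corollary~\ref{corollary1} and the elementary asymptotics of the sequence $\g{n}=c(n+1)^p$. By Lemma~\ref{lemma0}(a) these recursion coefficients satisfy conditions \ref{c2}--\ref{c6}, so Corollary~\ref{corollary1} applies and gives
\[
\lim_{n\rightarrow\infty}\frac{\sum_{k=0}^{n}p_k^2(\omega)}{\sum_{k=0}^{n}\frac{1}{\gamma_k}}
=\frac12\lim_{n\rightarrow\infty}\g{n}\bigl(p_n^2(\omega)+p_{n+1}^2(\omega)\bigr),
\]
with uniform convergence on compact sets. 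By Theorem~\ref{t1}, the right-hand limit lies between positive constants $m_B$ and $M_B$ for $|\omega|\le B$, so in particular it is strictly positive and finite, and the convergence on the left is uniform on $|\omega|\le B$.

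Next I would compute the denominator $\sum_{k=0}^{n}\frac{1}{\gamma_k}=\frac1c\sum_{k=0}^{n}(k+1)^{-p}$. Since $0<p<1$, comparison with the integral $\int_1^{n+2}x^{-p}\,\dx x$ (or Euler--Maclaurin) yields
\[
\sum_{k=0}^{n}(k+1)^{-p}=\frac{(n+1)^{1-p}}{1-p}+\OO{1},
\]
hence $\sum_{k=0}^{n}\frac{1}{\gamma_k}\sim \dfrac{(n+1)^{1-p}}{c(1-p)}$ as $n\to\infty$. This is the only genuine computation, and it is completely routine.

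Combining the two displays,
\[
\lim_{n\rightarrow\infty}\frac{\sum_{k=0}^{n}p_k^2(\omega)}{(n+1)^{1-p}}
=\frac{1}{c(1-p)}\lim_{n\rightarrow\infty}\frac{\sum_{k=0}^{n}p_k^2(\omega)}{\sum_{k=0}^{n}\frac{1}{\gamma_k}}
=\frac{1}{2c(1-p)}\lim_{n\rightarrow\infty}\g{n}\bigl(p_n^2(\omega)+p_{n+1}^2(\omega)\bigr),
\]
which exists, is strictly positive and finite by Theorem~\ref{t1}, and inherits uniform convergence on compact sets from Corollary~\ref{corollary1} (the factor $\sum_{k=0}^n 1/\gamma_k$ relating the two quotients does not depend on $\omega$, so uniformity is preserved). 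There is no real obstacle here: the entire content of the corollary is already packaged in Theorem~\ref{t1} and Corollary~\ref{corollary1}, and the statement merely records the special case $\g{n}=c(n+1)^p$ together with the explicit growth rate $(n+1)^{1-p}$ of the partial sums $\sum_{k=0}^n 1/\gamma_k$. If anything deserves a moment's care, it is only checking that Lemma~\ref{lemma0}(a) indeed licenses the use of Corollary~\ref{corollary1} for the full range $0<p<1$, which it does.
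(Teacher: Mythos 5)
Your proposal is correct and follows essentially the same route as the paper: invoke Corollary~\ref{corollary1} (licensed by Lemma~\ref{lemma0}(a)) and combine it with the elementary growth estimate for $\sum_{k=0}^{n}1/\gamma_k$ when $\gamma_n=c(n+1)^p$. If anything, you are slightly more careful than the paper, which only records $\sum_{k=0}^{n}1/\gamma_k=\OO{(n+1)^{1-p}}$, whereas your two-sided asymptotic $\sum_{k=0}^{n}1/\gamma_k\sim (n+1)^{1-p}/(c(1-p))$ is what is actually needed to conclude that the limit exists and is positive.
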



\section{A Representation of Orthogonal Polynomials}

To estimate asymptotic behaviour of the sum $p_{m}^2(\omega)+p_{m+1}^2(\omega)$  as $m\rightarrow\infty$, we will consider functions of the form ${\ii}^m p_{m}(\omega)+{\ii}^{m+1}p_{m+1}(\omega)$; in this way we have $|{\ii}^m p_{m}(\omega)+{\ii}^{m+1}p_{m+1}(\omega)|^2=p_{m}^2(\omega)+p_{m+1}^2(\omega)$.  What the real and what the imaginary part of such a function is  depends on the parity of $m$.  For simplicity, we will assume that $m=2n$ and  define
\begin{align}\label{lab4}
E_n(\omega)&={\ii}^{2n}p_{2n}(\omega)+{\ii}^{2n+1}p_{2n+1}(\omega)=(-1)^n(p_{2n}(\omega)+{\ii}\,p_{2n+1}(\omega));
\end{align}
thus, 
\begin{align}\label{absE}
p_{2n}^2(\omega)+p_{2n+1}^2(\omega)=|E_n(\omega)|^2.
\end{align}
For odd $m$ we set $E_n(\omega)={\ii}^{2n+1}p _{2n+1}(\omega)+{\ii}^{2n+2}p_{2n+2}(\omega)$, and 
all of our  arguments go through with minor changes only.\footnote{These changes can be found in a \emph{Mathematica} file available online at \url{http://www.cse.unsw.edu.au/~ignjat/diff/OP2.zip}.} Also, since by our assumption \ref{c3} we have $\s{n}=\g{n+1}-\g{n}\rightarrow 0$, it follows that  $\g{n}/\g{n+1}\rightarrow 1$ and thus 
\[\LIM{n}\left(\frac{1}{\g{2n}}+\frac{1}{\g{2n+1}}\right)\frac{\g{2n}}{2}=1. \]
This implies that one of the two limits in \eqref{even} below  exists just in case the other also exists, in which case
\begin{equation}\label{even}
\frac{1}{2}\LIM{n}\g{2n}(p_{2n}^2(\omega)+p_{2n+1}^2(\omega))=\LIM{n}\frac{p_{2n}^2(\omega)+p_{2n+1}^2(\omega)}{\frac{1}{\g{2n}}+\frac{1}{\g{2n+1}}}.
\end{equation}

Thus, instead of proving \eqref{eqbounds} we will prove that the following limit exists and satisfies
\begin{equation}\label{7}
0<\LIM{n}\frac{p^2_{2n}(\omega)+p^2_{2n+1}(\omega)}{\frac{1}{\g{2n}}+\frac{1}{\g{2n+1}}}<\infty,
\end{equation}
and instead of proving \eqref{eqsum1} we will prove the following, more symmetric equivalent form of it,
\begin{equation}\label{8}
{\lim_{n\rightarrow \infty}
\frac{\sum_{k=0}^np^2_k(\omega)}{\sum_{k=0}^n\frac{1}{\gamma_k}}=\lim_{n\rightarrow \infty}{\frac{p^2_{2n}(\omega)+p^2_{2n+1}(\omega)}{\frac{1}{\g{2n}}+\frac{1}{\g{2n+1}}}}},
\end{equation}
as well as that the convergence of both limits is uniform on every compact interval.\\

We now look for a recurrence satisfied by $E_n(\omega)$ given by \eqref{lab4}. Substituting $n$ by $2n-1$ in the  three term recurrence \eqref{poly} and dividing both sides of the resulting equation by $\g{2n-1}$ we obtain
\begin{equation}\label{lab20}
p_{2n}(\omega)=
\frac{\omega}{\g{2n-1}}\, p_{2n-1}(\omega)-\frac{\g{2n-2}}{\g{2n-1}}\,p_{2n-2}(\omega).
\end{equation}
Multiplying both sides by ${\ii}^{2n}$ produces
\begin{equation}\label{lab2}
{\ii}^{2n}p_{2n}(\omega)=
\frac{{\ii}\,\omega}{\g{2n-1}}\, {\ii}^{2n-1}p_{2n-1}(\omega)+\frac{\g{2n-2}}{\g{2n-1}}\, {\ii}^{2n-2}p_{2n-2}(\omega).
\end{equation}
Similarly, substituting in  \eqref{poly} $n$ by $2n$ and using \eqref{lab20} to eliminate $p_{2n}(\omega)$ we obtain
\begin{align*}
\g{2n}p_{2n+1}(\omega)&=\omega\left(\frac{\omega}{\g{2n-1}}p_{2n-1}(\omega)-\frac{\g{2n-2}}{\g{2n-1}}p_{2n-2}(\omega)\right)-\g{2n-1}p_{2n-1}(\omega)\\
&=\left(\frac{\omega^2}{\g{2n-1}}-\g{2n-1}\right)p_{2n-1}(\omega)-
\frac{\omega\,\g{2n-2}}{\g{2n-1}}p_{2n-2}(\omega).
\end{align*}
Multiplying both sides by ${\ii}^{2n+1}/\g{2n}$ we obtain
\begin{equation}\label{lab3}\begin{split}
{\ii}^{2n+1}p_{2n+1}(\omega)=\left(-\frac{\omega^{2}}{\g{2n-1}\g{2n}}+\frac{\g{2n-1}}{\g{2n}}\right){\ii}^{2n-1}p_{2n-1}(\omega)+\\
\frac{{\ii}\,\omega\,\g{2n-2}}{\g{2n}\g{2n-1}}\, {\ii}^{2n-2}p_{2n-2}(\omega).\end{split}
\end{equation}

If we add  equations \eqref{lab2} and \eqref{lab3} together we obtain
\begin{equation}\label{lab6}\begin{split}
E_{n}(\omega)=\left(-\frac{\omega^{2}}{\g{2n-1}\g{2n}}+\frac{\g{2n-1}}{\g{2n}}+{\ii}\,\frac{\omega}{\g{2n-1}}\right) {\ii}^{2n-1}p_{2n-1}(\omega)+\\
\left(\frac{\g{2n-2}}{\g{2n-1}}+{\ii}\,\frac{\omega\,\g{2n-2}}{\g{2n-1}\g{2n}}\right)\,{\ii}^{2n-2}p_{2n-2}(\omega).\end{split}
\end{equation}

Since from \eqref{lab4} we have
\begin{align}\label{PT}
&{\ii}^{2n-2}p_{2n-2}(\omega)=\frac{E_{n-1}(\omega)+\overline{E_{n-1}(\omega)}}{2}; &&{\ii}^{2n-1}p_{2n-1}(\omega)=\frac{E_{n-1}(\omega)-\overline{E_{n-1}(\omega)}}{2},&
\end{align}
after corresponding substitutions of \eqref{PT} in \eqref{lab6} we obtain 
\begin{align}\label{EE}
&E_{n}(\omega)=\nonumber\\
&\left(-\frac{\omega^{2}}{2\g{2n-1}\g{2n}}+\frac{\g{2n-1}}{2\g{2n}}+\frac{\g{2n-2}}{2\g{2n-1}}+{\ii}\left(\frac{\omega}{2\g{2n-1}}+\frac{\omega\,\g{2n-2}}{2\g{2n-1}\g{2n}}\right)\right)E_{n-1}(\omega)+\nonumber\\
&\left(\frac{\omega^{2}}{2\g{2n-1}\g{2n}}-\frac{\g{2n-1}}{2\g{2n}}+\frac{\g{2n-2}}{2\g{2n-1}}+{\ii}\left(-\frac{\omega}{2\g{2n-1}}+\frac{\omega\,\g{2n-2}}{2\g{2n-1}\g{2n}}\right)\right)\overline{E_{n-1}(\omega)}.
\end{align}

Since  the families of orthonormal polynomials considered in this paper are symmetric, we will restrict our attention to $\omega>0$; in all of our propositions the case when $\omega=0$ can easily be  handled  separately. Moreover, we will assume that $\omega>0$ is fixed and, to make our formulas more readable,  we will sometimes suppress $\omega$ in our notation; thus, for example, we will write $E_n$ instead of $E_n(\omega)$.\\

To get a more compact form of equality \eqref{EE} we define for all $n\geq 1$,
\begin{align}
\zt{n}&=-\frac{\omega^{2}}{2\g{2n-1}\g{2n}}+\frac{\g{2n-1}}{2\g{2n}}+\frac{\g{2n-2}}{2\g{2n-1}}+\ii\left(
\frac{\omega}{2\g{2n-1}}+\frac{\omega\,\g{2n-2}}{2\g{2n-1}\g{2n}}\right);\label{basic1}\\
\ztt{n}&=\frac{\omega^{2}}{2\g{2n-1}\g{2n}}-\frac{\g{2n-1}}{2\g{2n}}+\frac{\g{2n-2}}{2\g{2n-1}}+\ii\left(-\frac{\omega}{2\g{2n-1}}+\frac{\omega\,\g{2n-2}}{2\g{2n-1}\g{2n}}\right).\label{basic2}
\end{align}
Equation~\eqref{EE} now becomes
\begin{align}\label{main1}
E_n&=\zt{n}E_{n-1}+\ztt{n}\overline{E_{n-1}};
\end{align}
thus, while polynomials $p_n(\omega)$ satisfy a three term recurrence, $E_n(\omega)$ satisfy a recurrence with only two terms.\\

Let  $\Te{-1}=0$ and
for all $n\geq 0$  let $\Te{n}$ be the least number larger than $\Te{n-1}$ such that 
\[\Te{n}\equiv \arg E_n\mod 2\pi.\]
Thus,  for $n\geq 0$, $\Te{n}$ is a sequence of positive reals, monotonically increasing in $n$, such that
\begin{align}\label{polar}
&E_n=|E_n|{\e}^{\ii \Te{n}};&&p_{2n}(\omega)=(-1)^n|E_n|\cos\Te{n}; && p_{2n+1}(\omega)=(-1)^n|E_n|\sin\Te{n}.&
\end{align} 

In signal processing terminology,  $\Te{n}$ is the \emph{unwound phase of} $E_n$. We now define 
\begin{equation}\label{delta1}
\del{n}=\Te{n}-\Te{n-1}>0.
\end{equation}

By dividing both sides of  \eqref{main1} by $E_{n-1}$ we obtain
\begin{align}\label{new1}
\frac{E_n}{E_{n-1}}=\zt{n}+\ztt{n}\frac{\overline{E_{n-1}}}{E_{n-1}}=\zt{n}+\ztt{n}{\e}^{-2\ii \Te{n-1}}.
\end{align}
Since
\begin{align*}
\frac{E_n}{E_{n-1}}=\frac{|E_n|}{|E_{n-1}|}\,{\e}^{\ii (\Te{n}-\Te{n-1})}=\frac{|E_n|}{|E_{n-1}|}\,{\e}^{\ii \Delta_{n}},
\end{align*}
from \eqref{new1} we obtain 
\begin{align}\label{main}
\frac{|E_n|}{|E_{n-1}|}\,{\e}^{\ii \Delta_{n}}=
\zt{n}+\ztt{n}{\e}^{-2\ii \Te{n-1}},
\end{align}
which implies
\begin{align}\label{ee3}
\frac{|E_n|}{|E_{n-1}|}&=
|\zt{n}+\ztt{n}{\e}^{-2\ii \Te{n-1}}|.
\end{align}
Let us define
\begin{align}\label{mult}
&\mult{0}=|E_0|;&&
\mult{n}=|\zt{n}+\ztt{n}{\e}^{-2\ii \Te{n-1}}|,\ \ (n>0);&
\end{align}
then \eqref{ee3} is equivalent to
\begin{align}\label{AA}
|E_n|&=|E_{n-1}|\mult{n}.
\end{align}
Consequently,
\begin{align*}
|E_n|&=\prod_{j=0}^{n}\mult{j}
~ \label{ee4}
\end{align*}
and this and \eqref{absE} imply
\begin{align*}
\frac{p_{2n}(\omega)^2+p_{2n+1}(\omega)^2}{\frac{1}{\g{2n}}+\frac{1}{\g{2n+1}}}&
=\frac{\prod_{j=0}^{n}\mult{j}^2}{\frac{1}{\g{2n}}+\frac{1}{\g{2n+1}}}.
\end{align*}
Taking the logarithm of both sides and letting 
\begin{equation}\label{largelog}
\mathcal{S}_n=2\sum_{j=0}^{n}\ln\mult{j}-\ln\left(\frac{1}{\g{2n}}+\frac{1}{\g{2n+1}}\right),
\end{equation}
we conclude that, in order  to prove Theorem~\ref{t1}, it is enough to prove that 
$\mathcal{S}_n(\omega)$ converges to a finite limit as $n\rightarrow\infty$, uniformly in $\omega$ from a compact set. \\

Let us define 
\begin{align*}
\lambda_0&=\frac{1}{\frac{1}{\g{0}}+\frac{1}{\g{1}}};&
\lambda_n&=\frac{\frac{1}{\g{2n-2}}+\frac{1}{\g{2n-1}}}{\frac{1}{\g{2n}}+\frac{1}{\g{2n+1}}},\ \ (n\geq 1).&
\end{align*} 
We can now represent $-\ln\left(\frac{1}{\g{2n}}+\frac{1}{\g{2n+1}}\right)$ as a telescopic sum,
\begin{align}\label{telescope}
-\ln\left(\frac{1}{\g{2n}}+\frac{1}{\g{2n+1}}\right)=\sum_{j=1}^{n+1}\ln\lambda_{j-1}
\end{align}
and obtain from \eqref{largelog}
\begin{align}
\mathcal{S}_n&=2\ln \mult{0}+\ln\lambda_{n}+\sum_{j=1}^{n}\left(2\ln\mult{j}+\ln\lambda_{j-1}\right)\nonumber\\
&=\ln \mult{0}+\ln\mult{n}+\ln\lambda_{n}+\sum_{j=1}^{n}\left(\ln\mult{j-1}+\ln\mult{j}+\ln\lambda_{j-1}\right).
\label{SS}
\end{align}
The reasons for introducing the telescopic sum \eqref{telescope} and for pairing $\ln\mult{j-1}$ with $\ln\mult{j}$ in \eqref{SS} will be clear later.\footnote{See the comment at the end of the proof of Lemma~\ref{FG1}, footnote \ref{ftn1} and the comment after equation \eqref{normw}.}
Before proceeding with the proof of convergence of $\mathcal{S}_n$, we must first prove some elementary properties of the basic sequences $\zt{n}, \ztt{n}$ and $\del{n}$.


\section{Properties of the Basic Sequences}

Let us define 
\begin{equation}\label{epsdef}
\epsilon_n=\frac{\g{2n-2}\g{2n}-\g{2n-1}^2}{\g{2n-1}}.
\end{equation} 

\begin{lemma}\label{l0}
\begin{align}\label{epsilon}
\epsilon_n=\ds{2n-2}-\frac{\s{2n-2}\s{2n-1}}{\g{2n-1}}\rightarrow 0.
\end{align}
\end{lemma}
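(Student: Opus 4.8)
The plan is to verify the algebraic identity displayed in \eqref{epsilon} by a direct substitution, and then read off the limit $\epsilon_n\to 0$ from conditions \ref{c2} and \ref{c3}.

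First I would express the two higher-index coefficients through $\g{2n-2}$ and the first differences: $\g{2n-1}=\g{2n-2}+\s{2n-2}$ and $\g{2n}=\g{2n-2}+\s{2n-2}+\s{2n-1}$. Substituting these into the numerator of \eqref{epsdef} and expanding, the $\g{2n-2}^2$ contributions cancel and one is left with
\[
\g{2n-2}\g{2n}-\g{2n-1}^2=\g{2n-2}(\s{2n-1}-\s{2n-2})-\s{2n-2}^2=\g{2n-2}\ds{2n-2}-\s{2n-2}^2.
\]
Dividing by $\g{2n-1}$ and writing $\g{2n-2}/\g{2n-1}=1-\s{2n-2}/\g{2n-1}$ gives $\epsilon_n=\ds{2n-2}-\s{2n-2}(\ds{2n-2}+\s{2n-2})/\g{2n-1}$, and since $\ds{2n-2}+\s{2n-2}=\s{2n-1}$ this is exactly the first equality in \eqref{epsilon}.

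For the convergence, condition \ref{c3} gives $\s{m}\to 0$, so in particular $\s{2n-2}\to 0$ and $\s{2n-1}\to 0$; hence $\ds{2n-2}=\s{2n-1}-\s{2n-2}\to 0$ and the product $\s{2n-2}\s{2n-1}\to 0$. By \ref{c2} we have $\g{2n-1}\to\infty$, so the second term $\s{2n-2}\s{2n-1}/\g{2n-1}\to 0$ as well, and therefore $\epsilon_n\to 0$. I do not expect any substantive obstacle here: the only care needed is the bookkeeping in the expansion of the numerator, the whole point being that after the telescoping cancellations $\epsilon_n$ is expressed entirely through the first and second differences, which are precisely the quantities controlled by \ref{c2} and \ref{c3} (and which will make condition \ref{c6} the natural tool for handling $\epsilon_n$ later on).
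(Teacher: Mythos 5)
Your computation is correct and is essentially the paper's own argument: the paper simply states that the identity follows by straightforward computation from the definition of $\epsilon_n$ (verified symbolically in the accompanying files), and your telescoping substitution $\g{2n-1}=\g{2n-2}+\s{2n-2}$, $\g{2n}=\g{2n-2}+\s{2n-2}+\s{2n-1}$ is exactly that computation. The limit argument via \ref{c2} and \ref{c3} is also the intended one, so nothing is missing.
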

\begin{proof} Lemma  \ref{l0} and Lemma \ref{l1}  both follow by straightforward computations from definitions of $a_n$, $b_n$ and $\epsilon_n$ and condition \ref{c3}.\footnote{Computations which are not presented in every detail in this paper have been verified using the symbolic functionality of  the \emph{Mathematica}{\texttrademark} software. All asymptotic representations  were also checked numerically, using ten families of orthonormal polynomials. \emph{Mathematica} files containing these symbolic verifications as well as files containing numerical tests are available online at \url{http://www.cse.unsw.edu.au/~ignjat/diff/OP.zip}. Expanding the compressed file \texttt{OP.zip} produces a folder \texttt{OP} with two subfolders, \texttt{OP/symbolic/}  and \texttt{OP/numerical/} containing files which can be viewed using either \emph{Mathematica} software package or \emph{Wolfram CDF Player} available free of charge at  \url{http://www.wolfram.com/cdf-player/ }. In particular, \emph{Mathematica} verifications of Lemma \ref{l0} and  Lemma  \ref{l1}  are in file \texttt{OP/symbolic/0\_lemmas\_\ref{l0}\_and\_\ref{l1}.nb}.} 
\end{proof}

\begin{lemma}\label{l1}
\begin{align} \label{cs}\begin{split}
&(a)\hspace*{2mm}\Re(\zt{n})=\frac{\g{2n-1}}{\g{2n}}\left(1-\frac{\omega^2}{2\,\g{2n-1}^2}+\frac{\epsilon_{n}}{2\,\g{2n-1}}\right);\\
&(b)\hspace*{2mm}\Im(\zt{n})=\frac{\omega}{\g{2n}}\left(1+\frac{\ds{2n-2}}{2\,\g{2n-1}}\right);
\end{split}
\end{align}
\begin{align} \begin{split}
&(a)\hspace*{2mm}\Re(\ztt{n})=\frac{\omega}{2\g{2n}}\left(\frac{\omega}{\,\g{2n-1}}+\frac{\epsilon_n}{\omega}\right);\\
&(b)\hspace*{2mm}\Im(\ztt{n})=-\frac{\omega\left(\s{2n-1}+\s{2n-2}\right)}{2\,\g{2n-1}\g{2n}}.\label{sn}
\end{split}
\end{align}\hfill\qed
\end{lemma}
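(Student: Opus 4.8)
The plan is to obtain each of the four identities by direct algebraic manipulation of the defining formulas \eqref{basic1} and \eqref{basic2} for $\zt{n}$ and $\ztt{n}$, using only the definition \eqref{epsdef} of $\epsilon_n$, the definitions of $\s{n}$ and $\ds{n}$ as forward differences of $\g{n}$, and condition \ref{c3}. No deep input is needed; the content is bookkeeping, which is why the author defers it to a \emph{Mathematica} verification. I would present the key steps as follows.

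First, for $\Re(\zt{n})$: from \eqref{basic1}, $\Re(\zt{n})=-\frac{\omega^2}{2\g{2n-1}\g{2n}}+\frac{\g{2n-1}}{2\g{2n}}+\frac{\g{2n-2}}{2\g{2n-1}}$. I would factor out $\frac{\g{2n-1}}{\g{2n}}$ from all three terms; the first two give $\frac{\g{2n-1}}{\g{2n}}\bigl(1-\frac{\omega^2}{2\g{2n-1}^2}\bigr)$ immediately, while the third becomes $\frac{\g{2n-1}}{\g{2n}}\cdot\frac{\g{2n-2}\g{2n}}{2\g{2n-1}^2}$. The point is then to recognise, using \eqref{epsdef}, that $\g{2n-2}\g{2n}=\g{2n-1}^2+\g{2n-1}\epsilon_n$, so that $\frac{\g{2n-2}\g{2n}}{2\g{2n-1}^2}=\frac12+\frac{\epsilon_n}{2\g{2n-1}}$, which collects into the claimed form (a). For $\Im(\ztt{n})$: from \eqref{basic2} the imaginary part is $\frac{\omega}{2\g{2n-1}\g{2n}}(\g{2n-2}-\g{2n-1})=-\frac{\omega\,\s{2n-2}}{2\g{2n-1}\g{2n}}$; but the claimed formula \eqref{sn}(b) has $\s{2n-1}+\s{2n-2}$ in the numerator, not $\s{2n-2}$, so here I would need to be careful — this discrepancy suggests either a typo or that I have mis-expanded, and I would recheck \eqref{basic2} against \eqref{EE} and \eqref{lab6}; the safe route is to recompute $\Im(\ztt{n})$ from \eqref{lab3} directly rather than trust the intermediate display.

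Continuing, $\Im(\zt{n})$: from \eqref{basic1}, $\Im(\zt{n})=\frac{\omega}{2\g{2n-1}}+\frac{\omega\g{2n-2}}{2\g{2n-1}\g{2n}}=\frac{\omega}{2\g{2n}}\bigl(\frac{\g{2n}}{\g{2n-1}}+\frac{\g{2n-2}}{\g{2n-1}}\bigr)=\frac{\omega}{2\g{2n}}\cdot\frac{\g{2n}+\g{2n-2}}{\g{2n-1}}$. Writing $\g{2n}+\g{2n-2}=2\g{2n-1}+\ds{2n-2}$ (since $\ds{2n-2}=\g{2n}-2\g{2n-1}+\g{2n-2}$) gives $\frac{\omega}{\g{2n}}\bigl(1+\frac{\ds{2n-2}}{2\g{2n-1}}\bigr)$, which is (b). For $\Re(\ztt{n})$: from \eqref{basic2}, $\Re(\ztt{n})=\frac{\omega^2}{2\g{2n-1}\g{2n}}-\frac{\g{2n-1}}{2\g{2n}}+\frac{\g{2n-2}}{2\g{2n-1}}$; the last two terms combine to $\frac{1}{2\g{2n}}\bigl(\frac{\g{2n-2}\g{2n}}{\g{2n-1}}-\g{2n-1}\bigr)=\frac{\epsilon_n}{2\g{2n}}$ by \eqref{epsdef}, so the whole thing is $\frac{1}{2\g{2n}}\bigl(\frac{\omega^2}{\g{2n-1}}+\epsilon_n\bigr)=\frac{\omega}{2\g{2n}}\bigl(\frac{\omega}{\g{2n-1}}+\frac{\epsilon_n}{\omega}\bigr)$, matching \eqref{sn}(a). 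Finally, Lemma~\ref{l0}: expanding $\epsilon_n$ via \eqref{epsdef} with $\g{2n-2}=\g{2n-1}-\s{2n-2}$ and $\g{2n}=\g{2n-1}+\s{2n-1}$ gives $\g{2n-2}\g{2n}=\g{2n-1}^2+\g{2n-1}(\s{2n-1}-\s{2n-2})-\s{2n-2}\s{2n-1}$, hence $\epsilon_n=(\s{2n-1}-\s{2n-2})-\frac{\s{2n-2}\s{2n-1}}{\g{2n-1}}=\ds{2n-2}-\frac{\s{2n-2}\s{2n-1}}{\g{2n-1}}$; since $\ds{2n-2}=\s{2n-1}-\s{2n-2}\to 0$ by \ref{c3} and $\s{2n-2}\s{2n-1}/\g{2n-1}\to 0$ as well (bounded numerator over $\g{n}\to\infty$ by \ref{c2}, or just $\s{n}\to 0$), we get $\epsilon_n\to 0$.

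The only real obstacle I anticipate is the sign/index reconciliation in $\Im(\ztt{n})$ noted above: the clean computation from \eqref{basic2} yields $-\omega\,\s{2n-2}/(2\g{2n-1}\g{2n})$, whereas the stated \eqref{sn}(b) has $\s{2n-1}+\s{2n-2}$. I would resolve this by tracing back through \eqref{lab3} and \eqref{lab6} to see whether the coefficient of $\ii^{2n-2}p_{2n-2}$ in \eqref{lab6} really is $\g{2n-2}/\g{2n-1}+\ii\,\omega\g{2n-2}/(\g{2n-1}\g{2n})$ — and in particular whether a term like $-\ii\,\omega\g{2n-1}/\g{2n-1}\g{2n}$ has been absorbed incorrectly — before committing to either expression; everything else is routine factoring that a symbolic check confirms at once.
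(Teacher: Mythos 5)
Your overall route is the same as the paper's: Lemma~\ref{l1} is proved there by straightforward algebraic manipulation of the definitions \eqref{basic1}, \eqref{basic2} and \eqref{epsdef} (the paper merely defers the bookkeeping to a symbolic verification), and three of your four computations — $\Im(\zt{n})$, $\Re(\ztt{n})$, and, up to a small slip noted below, $\Re(\zt{n})$ — are correct and match the paper. In part (a) the first two terms of $\Re(\zt{n})$ actually give $\frac{\g{2n-1}}{\g{2n}}\bigl(\tfrac12-\frac{\omega^2}{2\g{2n-1}^2}\bigr)$, not $\frac{\g{2n-1}}{\g{2n}}\bigl(1-\frac{\omega^2}{2\g{2n-1}^2}\bigr)$; the missing $\tfrac12$ is exactly the one you extract from the third term via $\g{2n-2}\g{2n}=\g{2n-1}^2+\g{2n-1}\epsilon_n$, so your final collected form is right but the intermediate statement double-counts the constant.

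The genuine gap is your treatment of $\Im(\ztt{n})$: the ``discrepancy'' you flag does not exist, and you leave that case unresolved by suspecting a typo in \eqref{sn}(b) and proposing to retrace \eqref{lab3} and \eqref{lab6}. Your expansion is the faulty step. Putting the two imaginary terms of \eqref{basic2} over the common denominator $2\g{2n-1}\g{2n}$, the first term is $-\frac{\omega}{2\g{2n-1}}=-\frac{\omega\,\g{2n}}{2\g{2n-1}\g{2n}}$ (you in effect replaced $\g{2n}$ by $\g{2n-1}$ in this numerator), so
\begin{align*}
\Im(\ztt{n})=-\frac{\omega}{2\g{2n-1}}+\frac{\omega\,\g{2n-2}}{2\g{2n-1}\g{2n}}
=\frac{\omega\left(\g{2n-2}-\g{2n}\right)}{2\g{2n-1}\g{2n}}
=-\frac{\omega\left(\s{2n-2}+\s{2n-1}\right)}{2\g{2n-1}\g{2n}},
\end{align*}
since $\g{2n}-\g{2n-2}=\s{2n-1}+\s{2n-2}$. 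This is precisely \eqref{sn}(b); the paper's formula is correct, no re-derivation from \eqref{lab3} or \eqref{lab6} is needed, and with this correction (and the $\tfrac12$ fix in (a)) your argument becomes a complete elementary proof of the lemma, identical in spirit to the paper's.
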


\begin{corollary}\label{corarg}$\arg\zt{n}\rightarrow 0$ and $\arg\zt{n}>0$ for all sufficiently large $n$.
\end{corollary}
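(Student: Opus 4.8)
The plan is to read $\arg\zt{n}$ off directly from the explicit expressions for $\Re(\zt{n})$ and $\Im(\zt{n})$ supplied by Lemma~\ref{l1}: I will show that $\zt{n}\to 1$ as $n\to\infty$, while for all sufficiently large $n$ the point $\zt{n}$ lies in the open first quadrant of $\Cset$.

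First I would collect the auxiliary limits that are needed. Condition \ref{c3} gives $\s{n}\to 0$, and therefore $\ds{n}=\s{n+1}-\s{n}\to 0$, so in particular $\ds{2n-2}\to 0$. Since $\g{n}\to\infty$ by \ref{c2}, we have $\g{n+1}/\g{n}=1+\s{n}/\g{n}\to 1$, whence $\g{2n-1}/\g{2n}\to 1$. Finally, $\epsilon_n\to 0$ by Lemma~\ref{l0}.

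Next I would substitute these into the formulas of Lemma~\ref{l1}. In \eqref{cs}(a) the correction terms $\omega^2/(2\g{2n-1}^2)$ and $\epsilon_n/(2\g{2n-1})$ both tend to $0$, since $\g{2n-1}\to\infty$ and $\epsilon_n\to 0$; together with $\g{2n-1}/\g{2n}\to 1$ this gives $\Re(\zt{n})\to 1$, so in particular $\Re(\zt{n})>0$ for all large $n$. In \eqref{cs}(b) the factor $1+\ds{2n-2}/(2\g{2n-1})$ tends to $1$, hence is positive for all large $n$; since $\omega>0$ (the standing assumption) and $\g{2n}>0$, this forces $\Im(\zt{n})>0$ for all large $n$, while at the same time $\Im(\zt{n})=\frac{\omega}{\g{2n}}\bigl(1+\frac{\ds{2n-2}}{2\g{2n-1}}\bigr)\to 0$. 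Consequently $\zt{n}$ lies in the open first quadrant for all large $n$, so $0<\arg\zt{n}<\pi/2$, and in particular $\arg\zt{n}>0$. Finally, since $\zt{n}\to 1\ne 0$ and $\arg$ is continuous at $1$, we conclude $\arg\zt{n}\to\arg 1=0$.

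I do not expect a genuine obstacle here — the statement is an immediate consequence of Lemma~\ref{l1} together with \ref{c2}, \ref{c3} and Lemma~\ref{l0}. The only points that need a moment's care are that the standing assumption $\omega>0$ is precisely what makes $\Im(\zt{n})$ eventually positive (the degenerate case $\omega=0$ having been set aside earlier), and that one must pass from $\s{n}\to 0$ to $\ds{n}\to 0$ in order to know that the correction term in \eqref{cs}(b) is genuinely negligible.
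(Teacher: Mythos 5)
Your proof is correct and follows essentially the same route as the paper: both read the claim directly off the expressions for $\Re(\zt{n})$ and $\Im(\zt{n})$ in Lemma~\ref{l1}, using $\epsilon_n\rightarrow 0$ (Lemma~\ref{l0}) and conditions \ref{c2}--\ref{c3}. The only difference is that the paper records the quantitative asymptotic $\arg\zt{n}=\frac{\omega}{\g{2n-1}}+\OO{\frac{1}{\g{2n-1}^2}}$ in \eqref{te}, which it reuses later, whereas your qualitative first-quadrant argument suffices for the corollary itself.
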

\begin{proof}
From \eqref{cs} we have 
\begin{align}\label{te}
\arg\zt{n}&=\arctan\frac{\frac{\omega}{\g{2n-1}}\left(1+\frac{\ds{2n-2}}{2\g{2n-1}}\right)}{1-\frac{\omega^2}{2\g{2n-1}^2}+\frac{\epsilon_n}{2\g{2n-1}}}=\frac{\omega}{\g{2n-1}}+\OO{\frac{1}{\g{2n-1}^2}},
\end{align}
which implies both claims.
\end{proof}

\begin{corollary}\label{l2}
\begin{align}\label{asts}
&(a)\ \ |\ztt{n}|\rightarrow 0;&&(b)\ \ |\zt{n}|\rightarrow 1. &
\end{align}
\end{corollary}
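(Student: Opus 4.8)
The plan is to read everything off the closed forms for the real and imaginary parts of $\zt{n}$ and $\ztt{n}$ given in Lemma~\ref{l1}, using only the two "soft" hypotheses $\g{n}\to\infty$ (condition \ref{c2}) and $\s{n}\to 0$ (condition \ref{c3}), together with $\epsilon_n\to 0$ from Lemma~\ref{l0}. First I would record the auxiliary limits that all the building blocks converge to their expected values: writing $\g{2n}=\g{2n-1}+\s{2n-1}$ and dividing by $\g{2n-1}\to\infty$ gives $\g{2n-1}/\g{2n}\to 1$, and similarly $\g{2n-2}/\g{2n-1}\to 1$; the second difference $\ds{2n-2}=\s{2n-1}-\s{2n-2}\to 0$ by \ref{c3}; and $\omega/\g{2n}\to 0$, $\omega^2/\g{2n-1}^2\to 0$, $1/\g{2n-1}\to 0$ by \ref{c2}. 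Finally, from Lemma~\ref{l0} we have $\epsilon_n\to 0$, hence also $\epsilon_n/\g{2n-1}\to 0$ and $\epsilon_n/\g{2n}\to 0$.

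For part $(a)$, expanding \eqref{sn} gives $\Re(\ztt{n})=\dfrac{\omega^{2}}{2\,\g{2n-1}\g{2n}}+\dfrac{\epsilon_n}{2\,\g{2n}}$ and $\Im(\ztt{n})=-\dfrac{\omega(\s{2n-1}+\s{2n-2})}{2\,\g{2n-1}\g{2n}}$. In each summand the numerator is bounded while the denominator tends to infinity (or the numerator tends to $0$ against a bounded denominator), so both $\Re(\ztt{n})$ and $\Im(\ztt{n})$ tend to $0$, and therefore $|\ztt{n}|\to 0$.

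For part $(b)$, from \eqref{cs} the bracket in $\Re(\zt{n})$ is $1-\dfrac{\omega^{2}}{2\,\g{2n-1}^2}+\dfrac{\epsilon_n}{2\,\g{2n-1}}\to 1$ and the prefactor $\g{2n-1}/\g{2n}\to 1$, so $\Re(\zt{n})\to 1$; likewise the bracket in $\Im(\zt{n})$ is $1+\dfrac{\ds{2n-2}}{2\,\g{2n-1}}\to 1$ while the prefactor $\omega/\g{2n}\to 0$, so $\Im(\zt{n})\to 0$. Hence $|\zt{n}|^2=\Re(\zt{n})^2+\Im(\zt{n})^2\to 1$, i.e. $|\zt{n}|\to 1$.

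There is essentially no obstacle: the only inputs one must have in hand are the reduction of the $\epsilon_n$-terms supplied by Lemma~\ref{l0} and the observation that second differences $\ds{n}=\s{n+1}-\s{n}$ inherit convergence to $0$ from \ref{c3}; everything else is continuity of $|\cdot|$ applied to quantities whose limits are immediate from the closed forms. (One could equally bypass the explicit formulas of Lemma~\ref{l1} and estimate \eqref{basic1}–\eqref{basic2} directly, term by term, with the same effort.)
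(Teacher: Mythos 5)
Your proposal is correct, and for part $(a)$ it is essentially the paper's argument: both read $|\ztt{n}|\to 0$ off the closed forms \eqref{sn} of Lemma~\ref{l1}, using $\epsilon_n\to 0$ from Lemma~\ref{l0} together with conditions \ref{c2} and \ref{c3}. For part $(b)$ you take a mildly different route: you pass to the limit in the real and imaginary parts separately via \eqref{cs}, getting $\cs{n}\to 1$ and $\sn{n}\to 0$ (your reductions $\ds{2n-2}=\s{2n-1}-\s{2n-2}\to 0$ and $\g{2n-1}/\g{2n}\to 1$ are exactly what is needed), whereas the paper instead derives from the definitions \eqref{basic1}--\eqref{basic2} the exact identity $\am{n}^2-\amt{n}^2=\g{2n-2}/\g{2n}=1-(\s{2n-2}+\s{2n-1})/\g{2n}$, i.e.\ equation \eqref{bb1}, and combines it with part $(a)$. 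Both arguments are equally short and rest on the same hypotheses; what the paper's route buys is the non-asymptotic identity \eqref{bb1} itself, which is reused later (shifted $n\mapsto n-1$ it is the starting point of the proof of \eqref{log1} in Lemma~\ref{twologs}, and the quantity $\am{n}^2-\amt{n}^2$ reappears in \eqref{lll1} and \eqref{*1}), while your termwise-limit version is marginally more self-contained for the corollary alone. No gap in your argument.
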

\begin{proof}
From \eqref{sn}$(a)$ and \eqref{sn}$(b)$ we have
\begin{align}\label{aa1}
\amt{n}^2=\cst{n}^2+\snt{n}^2=\frac{\omega^2}{4\g{2n}^2}\left(\left(\frac{\omega}{\g{2n-1}}+\frac{\epsilon_n}{\omega}\right)^2+\left(\frac{\s{2n-1}+\s{2n-2}}{\g{2n-1}}\right)^2\right),
\end{align}
which, together with Lemma \ref{l0} and condition \ref{c3},  implies the $(a)$ part. On the other hand, directly from definitions \eqref{basic1} and \eqref{basic2}, after some simplifications  we have 
\begin{align*}
\cs{n}^2-\cst{n}^2&=\frac{\g{2n-2}}{\g{2n-1}}\left(-\frac{\omega^2}{\g{2n-1}\g{2n}}+\frac{\g{2n-1}}{\g{2n}}\right);\\
\sn{n}^2-\snt{n}^2&=\frac{\omega^2\g{2n-2}}{\g{2n-1}^2\g{2n}}.
\end{align*}
Summing these two equations produces 
\begin{align}\label{bb1}
\am{n}^2-\amt{n}^2&=\frac{\g{2n-2}}{\g{2n}}=1-\frac{\s{2n-2}+\s{2n-1}}{\g{2n}},
\end{align}
which, together with $(a)$,  proves the $(b)$ part of the corollary.
\end{proof}

\begin{corollary}\label{l4}
 $\Im(\zt{n})>|\ztt{n}|$ for all sufficiently large $n$.
\end{corollary}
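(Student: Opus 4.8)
The plan is to read the two quantities off the formulas already established and compare their orders of magnitude; no new idea is needed. From \eqref{cs}$(b)$,
\[
\Im(\zt{n})=\frac{\omega}{\g{2n}}\left(1+\frac{\ds{2n-2}}{2\,\g{2n-1}}\right),
\]
and since $\ds{n}=\s{n+1}-\s{n}\to 0$ by \ref{c3} while $\g{n}\to\infty$ by \ref{c2}, the bracketed factor tends to $1$. In particular $\Im(\zt{n})>0$ for all sufficiently large $n$, and $\g{2n}\,\Im(\zt{n})/\omega\to 1$.

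Next I would invoke \eqref{aa1}, which gives
\[
|\ztt{n}|=\frac{\omega}{2\g{2n}}\sqrt{\left(\frac{\omega}{\g{2n-1}}+\frac{\epsilon_n}{\omega}\right)^2+\left(\frac{\s{2n-1}+\s{2n-2}}{\g{2n-1}}\right)^2}.
\]
Each summand under the square root tends to $0$: $\omega/\g{2n-1}\to 0$ by \ref{c2}, $\epsilon_n\to 0$ by Lemma~\ref{l0}, and $(\s{2n-1}+\s{2n-2})/\g{2n-1}\to 0$ by \ref{c2} and \ref{c3}. Hence $\g{2n}\,|\ztt{n}|/\omega\to 0$.

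Dividing the two asymptotics yields $|\ztt{n}|/\Im(\zt{n})\to 0$, so $\Im(\zt{n})>|\ztt{n}|$ for all sufficiently large $n$; when $|\ztt{n}|=0$ the inequality is immediate from $\Im(\zt{n})>0$. There is no real obstacle here: the corollary is merely the observation that $\Im(\zt{n})$ has exact order $1/\g{2n}$ whereas $|\ztt{n}|$ is of strictly smaller order $o(1/\g{2n})$. The only point requiring attention is to confirm that every error term appearing in the displayed expressions does vanish, which is precisely the content of Lemma~\ref{l0} together with conditions \ref{c2} and \ref{c3}.
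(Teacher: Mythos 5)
Your proof is correct and takes essentially the same route as the paper: both arguments compare $\Im(\zt{n})$ with $|\ztt{n}|$ directly from \eqref{cs}$(b)$ and \eqref{aa1}, using Lemma \ref{l0} and conditions \ref{c2}, \ref{c3} to see that for fixed $\omega>0$ one has $\Im(\zt{n})=\frac{\omega}{\g{2n}}(1+o(1))$ while $|\ztt{n}|=o\!\left(\frac{1}{\g{2n}}\right)$. The paper merely packages the comparison as an exact computation of $\Im(\zt{n})^2-|\ztt{n}|^2$ combined with the eventual positivity of $\Im(\zt{n})$, which is only a cosmetic difference from your order-of-magnitude argument.
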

\begin{proof}
We first obtain from \eqref{cs}(b)
\begin{align*}
&\sn{n}=\\
&\ \ \frac{\omega}{\g{2n-1}}\frac{\g{2n-1}}{\g{2n}}\left(1+\frac{\s{2n-1}-\s{2n-2}}{2\g{2n-1}}\right)=\frac{\omega}{\g{2n-1}}
\left(\frac{\g{2n-1}}{\g{2n}}+\frac{\s{2n-1}-\s{2n-2}}{2\g{2n}}\right)=\\
&\ \ \frac{\omega}{\g{2n-1}}
\left(1+\frac{\g{2n-1}-\g{2n}}{\g{2n}}+\frac{\s{2n-1}-\s{2n-2}}{2\g{2n}}\right)=\frac{\omega}{\g{2n-1}}
\left(1-\frac{\s{2n-1}+\s{2n-2}}{2\g{2n}}\right).
\end{align*}
This, together with \eqref{aa1}, yields
\begin{align*}
&\sn{n}^2-\amt{n}^2=\\
&\frac{\omega^2}{\g{2n-1}^2}\left(\left(1-\frac{\s{2n-1}+\s{2n-2}}{2\g{2n}}\right)^2-\left(\frac{\omega}{\g{2n-1}}+\frac{\epsilon_n}{\omega}\right)^2\frac{\g{2n-1}^2}{4\g{2n}^2}-\left(\frac{\s{2n-1}+\s{2n-2}}{2\g{2n}}\right)^2\right)\\
&=\frac{\omega^2}{\g{2n-1}^2}\left(1-\frac{\s{2n-1}+\s{2n-2}}{\g{2n}}-\frac{1}{4}\left(\frac{\omega}{\g{2n-1}}+\frac{\epsilon_n}{\omega}\right)^2\left(1-\frac{\s{2n-1}}{\g{2n}}\right)^2 \right).
\end{align*}
Since equations \eqref{cs} together with Lemma \ref{l0} and Condition \ref{c3} also imply that eventually  $\Im(\zt{n})>0$,
we obtain the claim of the corollary.
\end{proof}

We now return to equation \eqref{main}. Since equations \eqref{cs} together with Lemma \ref{l0} and Condition \ref{c3} imply that eventually $\Re(\zt{n})>0$ and $\Im(\zt{n})>0$, we obtain  $0<\arg{\zt{n}}<\pi/2$. On the other hand, by Corollary \ref{l4}, $\Im(\zt{n})>|\ztt{n}|$ for all sufficiently large $n$. Thus, 
$\arg(\zt{n}+\ztt{n}{\e}^{-2\ii \Te{n-1}})>0$; see Figure~\ref{small-plot}.

\begin{figure}[h!]
\begin{center}
\includegraphics[width=4in]{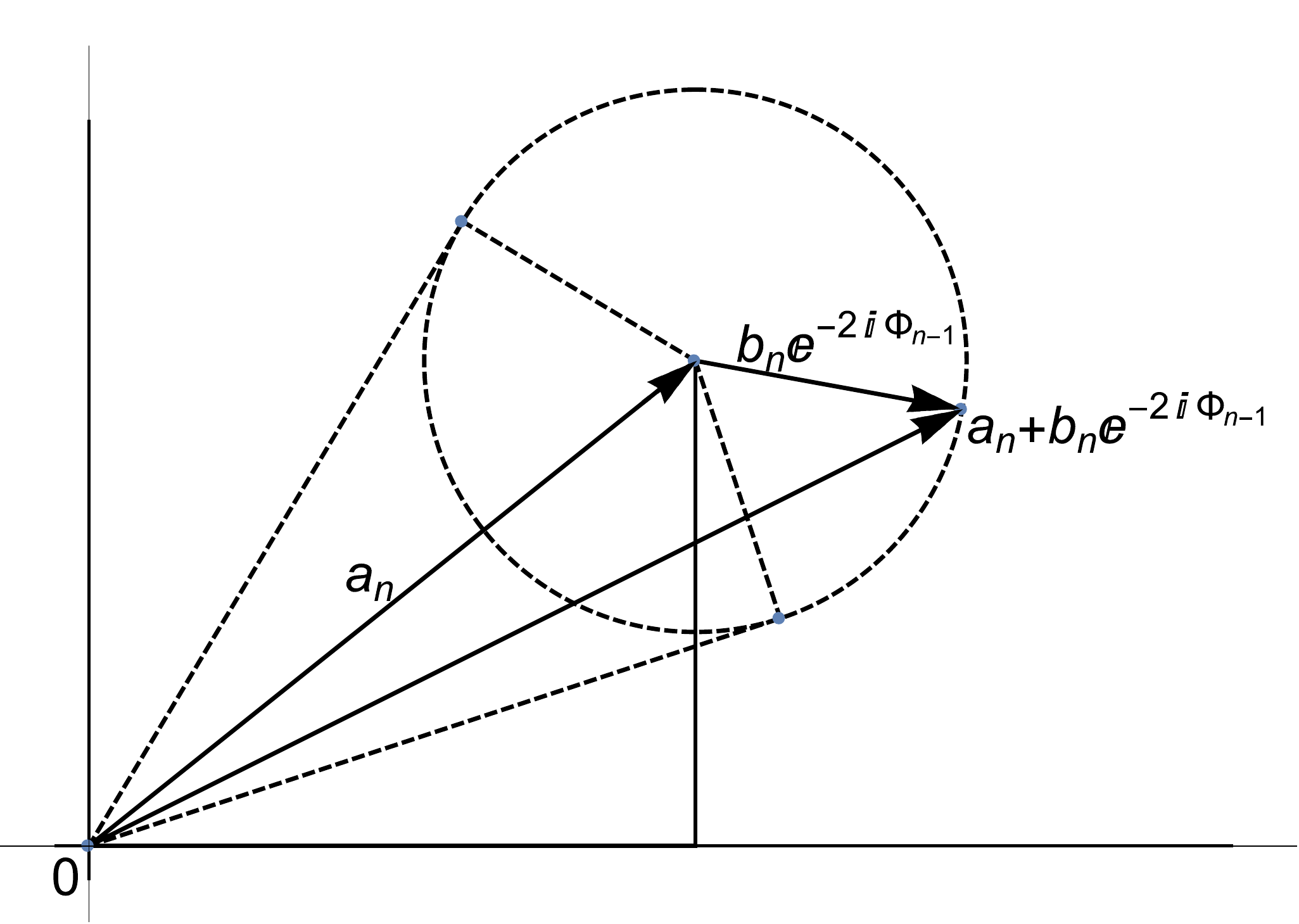}
\caption{{\bf }}
\label{small-plot}
\end{center}
\end{figure}

Since  \eqref{main} implies that $\del{n}-\arg\bigl(\zt{n} + \ztt{n}{\e}^{-2\ii \Te{n-1}}\bigr)=2k\pi$ for some $|k|\leq 1$ and since $2\pi\geq\del{n}>0$ and $\pi\geq\arg(\zt{n}+\ztt{n}{\e}^{-2\ii \Te{n-1}})>0$, we obtain that for all sufficiently large $n$,
\begin{align}\label{td}
\del{n}&=\arg\bigl(\zt{n} + \ztt{n}{\e}^{-2\ii \Te{n-1}}\bigr).
\end{align}

Note that also  (see again Figure \ref{small-plot})
\begin{align}\label{bt}
\arg(\zt{n})-\arcsin\frac{|\ztt{n}|}{|\zt{n}|}\leq\arg(\zt{n}+\ztt{n}{\e}^{-2\ii \Te{n-1}})\leq\arg(\zt{n})+\arcsin\frac{|\ztt{n}|}{|\zt{n}|}.
\end{align}
On the other hand, \eqref{aa1},  \eqref{bb1}  and
a series expansion of $\arcsin\sqrt{x}$ imply that\footnote{\emph{Mathematica} verification of equation \eqref{arcsin} is in file \texttt{OP/symbolic/1\_eq\_\ref{arcsin}.nb}.}
\begin{equation}\label{arcsin}\arcsin\frac{|\ztt{n}|}{|\zt{n}|}=\OO{\frac{|\epsilon_n|}{\g{2n-1}}}+\OO{\frac{1}{\g{2n-1}^2}},\end{equation}
which, together with \eqref{te}, \eqref{bt} and Lemma \ref{l0} yield
\begin{align}\label{dlt}
 \del{n}=\frac{\omega}{\g{2n-1}}(1 + o(1)).
\end{align}
Consequently, Condition \ref{c7} implies
\begin{equation}\label{Te}
\Te{n}=\Te{0}+\sum_{k=1}^n\Delta_k \sim\sum_{k=1}^n\frac{\omega}{\g{2k-1}}\rightarrow\infty.
\end{equation}~\\

Equations \eqref{mult} and \eqref{td} express $\mult{n}$ and $\del{n}$ via $\Te{n-1}$.  For a reason which will be clear later\footnote{See footnote \ref{ftn1}.}, we need to represent $\mult{n-1}$ and $\Delta_{n-1}$ also via $\Te{n-1}$, rather than $\Te{n-2}$. 
To this end, taking the complex conjugate of both sides of equation~\eqref{main1}, we obtain 
\begin{align}\label{mainc}
\overline{E_n}&=\overline{\zt{n}}\;\overline{E_{n-1}}+\overline{\ztt{n}}{E_{n-1}}.
\end{align}
Multiplying both sides of \eqref{main1} by $\overline{a_n}$, multiplying both sides of \eqref{mainc} by $b_n$ and subtracting the corresponding sides of thus obtained two equations produces 
\begin{align}\label{ll1}
E_{n-1}&=\frac{\overline{\zt{n}}E_{n}-\ztt{n}\overline{E_{n}}}{|{\zt{n}}|^2-|{\ztt{n}}|^2},
\end{align}
which implies 
\begin{align*}
\frac{E_{n-1}}{E_n}&=\frac{\overline{\zt{n}}-\ztt{n}{\e}^{-\ii 2\Te{n}}}{|{\zt{n}}|^2-|{\ztt{n}}|^2}.
\end{align*}
Thus,  
\begin{align*}
\frac{|E_{n-1}|}{|E_n|}{\e}^{-\ii\del{n}}&=\frac{\overline{\zt{n}}-\ztt{n}{\e}^{-\ii 2\Te{n}}}{|{\zt{n}}|^2-|{\ztt{n}}|^2}.
\end{align*}
Taking the complex conjugates of both sides and using \eqref{AA} we obtain
\begin{align}\label{nnm}
\frac{{\e}^{\ii\del{n}}}{\mult{n}}&=\frac{{\zt{n}}-\overline{\ztt{n}}{\e}^{\ii 2\Te{n}}}{|{\zt{n}}|^2-|{\ztt{n}}|^2}.
\end{align}
This implies 
\begin{align}\label{lll1}
{\mult{n}}&=\frac{|{\zt{n}}|^2-|{\ztt{n}}|^2}{|{\zt{n}}-\overline{\ztt{n}}{\e}^{\ii 2\Te{n}}|},
\end{align}
and, using the same reasoning as in the derivation of \eqref{td}, 
\begin{align}\label{ll5}
\del{n}&=\arg\left({\zt{n}}-\overline{\ztt{n}}{\e}^{\ii 2\Te{n}}\right).
\end{align}
Finally, substituting $n$ with $n-1$  in \eqref{lll1} and \eqref{ll5} we get
\begin{align}
{\mult{n-1}}&=\frac{|{\zt{n-1}}|^2-|{\ztt{n-1}}|^2}{|{\zt{n-1}}-\overline{\ztt{n-1}}{\e}^{\ii 2\Te{n-1}}|};\label{mun-1}\\
\del{n-1}&=\arg\left({\zt{n-1}}-\overline{\ztt{n-1}}{\e}^{\ii 2\Te{n-1}}\right).\label{deln-1}
\end{align}


\section{Representing $\mathcal{S}_n$ as a Riemann Sum}

A part of our strategy for proving convergence of $\mathcal{S}_n$ is to represent $\mathcal{S}_n$ as a Riemann sum.
Using  \eqref{mult} and  \eqref{mun-1} we get
\begin{align}\label{multmult1}\begin{split}
\ln\mult{n}+\ln\mult{n-1}+\ln\lambda_{n-1}=\ln({|\zt{n-1}}|^2-|{\ztt{n-1}}|^2)+\ln\lambda_{n-1}\\+\ln\left|{\zt{n} }+\ztt{n}{\e}^{-2\ii \Te{n-1}}\right|-\ln\left|{{\zt{n-1}}-\overline{\ztt{n-1}}{\e}^{2\ii\Te{n-1}}}\right|.\end{split}
\end{align}

Similarly, from  \eqref{td} and \eqref{deln-1} we also get that
\begin{align}\begin{split}
\del{n-1}+\del{n}=\arg\bigl(\zt{n-1}-\overline{\ztt{n-1}}{\e}^{2\ii\Te{n-1}}\bigr)+
\arg\bigl(\zt{n} + {\ztt{n}}{\e}^{-2\ii \Te{n-1}}\bigr).
\label{deldel1}\end{split}
\end{align}
Note that \eqref{dlt} implies that both summands in \eqref{deldel1} are positive and converge to zero; thus, since for all $z\in\Cset$ which are outside the branch cut $(-\infty,0]$ of the logarithm function we have
\begin{align*}
&\ln |z|= \frac{1}{2}\bigl(\ln\overline{z}+\ln z\bigr);&&
\arg z=\frac{\ii}{2}\bigl(\ln \overline{z}-\ln z\bigr),&
\end{align*}
equations \eqref{multmult1} and \eqref{deldel1} can be transformed into
\begin{align}\label{mult22}
\begin{split}
\ln\mult{n}+\ln\mult{n-1}+\ln\lambda_{n-1}=\ln({|\zt{n-1}}|^2-|{\ztt{n-1}}|^2)+\ln\lambda_{n-1}+\\\frac{1}{2}\ln\overline{\zt{n} +\ztt{n}{\e}^{-2\ii \Te{n-1}}}+
\frac{1}{2}\ln({\zt{n} }+\ztt{n}{\e}^{-2\ii \Te{n-1}})-\\\frac{1}{2}\ln\overline{\zt{n-1}-\overline{\ztt{n-1}}{\e}^{2\ii\Te{n-1}}}-\frac{1}{2}\ln({{\zt{n-1}}-\overline{\ztt{n-1}}{\e}^{2\ii\Te{n-1}}});
\end{split}
\end{align}
\begin{align}\label{del22}
\begin{split}
\del{n-1}+\del{n}=\frac{\ii}{2}\ln\overline{\zt{n} +\ztt{n}{\e}^{-2\ii \Te{n-1}}}-\frac{\ii}{2}\ln({\zt{n} }+\ztt{n}{\e}^{-2\ii \Te{n-1}})\\+\frac{\ii}{2}\ln\overline{\zt{n-1}-\overline{\ztt{n-1}}{\e}^{2\ii\Te{n-1}}}-
\frac{\ii}{2}\ln(\zt{n-1}-\overline{\ztt{n-1}}{\e}^{2\ii\Te{n-1}}).
\end{split}
\end{align}
~\\
Thus, quite remarkably and very fortunately for our proof,  the summands in the sum  appearing in \eqref{SS}, i.e.,  $ \ln\mult{j}+\ln\mult{j-1}$ as well as  $\del{j-1}+\del{j}$ are both expressible via the same logarithms, $\ln({\zt{j} }+\ztt{j}{\e}^{-2\ii \Te{j-1}})$ and $\ln(\zt{j-1}-\overline{\ztt{j-1}}{\e}^{2\ii\Te{j-1}})$. This fact will be used for representing $\mathcal{S}_n$ as a Riemann sum. Let us define for all $n\geq 1$,

\begin{align}\label{*1}
\begin{split}
F_n(t)=2\ln({|\zt{n-1}}|^2-|{\ztt{n-1}}|^2)+2\ln\lambda_{n-1}+\ln(\overline{\zt{n}} +\overline{\ztt{n}}{\e}^{\ii t})+\ln(\zt{n} +\ztt{n}{\e}^{-\ii t})\\-\ln(\overline{\zt{n-1}}-{\ztt{n-1}}{\e}^{-\ii t})
-\ln(\zt{n-1}-\overline{\ztt{n-1}}{\e}^{\ii t});
\end{split}
\end{align}
\begin{align}\begin{split}
\label{*2}
G_n(t)=\ii \left(\ln(\overline{\zt{n}} +\overline{\ztt{n}}{\e}^{\ii t})-\ln(\zt{n}+\ztt{n}{\e}^{-\ii t})+\ln(\overline{\zt{n-1}}-\ztt{n-1}{\e}^{-\ii t})-\right.\\
\left.\ln(\zt{n-1}-\overline{\ztt{n-1}}{\e}^{\ii t})\right).\end{split}
\end{align} 
If we let 
\begin{align*}
H_n(t)=\frac{F_n(t)}{G_n(t)},
\end{align*} 
then \eqref{mult22} and \eqref{del22} imply that for all $n\geq 2$, 
\begin{align}
F_n(2\Te{n-1})&=2(\ln\mult{n}+\ln\mult{n-1}+\ln\lambda_{n-1});\\
G_n(2\Te{n-1})&=2(\del{n-1}+\del{n}).\label{sumDel}
\end{align}
Thus, 
\begin{align}
H_n(2\Te{n-1})(\del{n-1}+\del{n})=\ln\mult{n}+\ln\mult{n-1}+\ln\lambda_{n-1},
\end{align}
and consequently
\begin{align*}
\sum_{j=1}^{n}H_j(2\Te{j-1})(\del{j-1}+\del{j})&=\sum_{j=1}^n( \ln\mult{j}+\ln\mult{j-1}+\ln\lambda_{j-1}).
\end{align*}
Combining this with \eqref{SS}  we obtain
\begin{align}
\mathcal{S}_n&=\ln \mult{0}+\ln\mult{n}+\ln\lambda_{n}+\sum_{j=1}^{n}H_j(2\Te{j-1})(\del{j-1}+\del{j}).
\end{align}
It is easy to see that condition \ref{c3} implies $\lambda_n\rightarrow 1$; see for example equation \eqref{alambda}.  Since 
\[|\zt{n}|-|\ztt{n}|\leq |\zt{n}+\ztt{n}{\e}^{-2\ii \Te{n}}|\leq |\zt{n}|+|\ztt{n}|,\]
Corollary \ref{l2} implies  $\mult{n}=|\zt{n}+\ztt{n}{\e}^{-2\ii \Te{n}}|\rightarrow 1$. Thus, to prove that $\mathcal{S}_n$ is convergent, it is enough to show that the sum  
\begin{align}\label{s*}
\mathcal{S}_n^\ast&=\sum_{j=1}^{n} H_j(2\Te{j-1})(\del{j-1}+\del{j})
\end{align}
converges to a finite limit as $n\rightarrow \infty$.\footnote{\label{ftn1}Formula \eqref{s*} explains why we paired $\del{n-1}$ with $\del {n}$ (and thus also $\mult{n-1}$ with $\mult{n}$) and why we expressed $\del{n-1}$ in terms of $\Te{n-1}$. Such pairing has other advantages as well; see, for example, the comment after equation \eqref{normw}.}

We now note that the sum  $\mathcal{S}_n^\ast$  already resembles a Riemann sum, with a partition of the interval of integration $[2\Te{1}-\del{1}, 2\Te{n-1}+\del{n}]$ into sub-intervals $[2\Te{j-1}-\del{j-1},2\Te{j-1}+\del{j}]$ and the integrand $H_j(t)$ evaluated at sampling points $2\Te{j-1}$, except that $H_j(t)$ is a sequence of functions, rather than a single function.  However, since functions $ H_j(t)$  are  $2\pi$ periodic, we can expand them into their Fourier series which, as we shall see, will reduce $\mathcal{S}^\ast_n$ to Riemann sums for integrals of some damped complex exponentials. We will then show that these integrals converge, as well as  that the errors of approximating these integrals by the corresponding Riemann sums also converge, which will entail convergence of $\mathcal{S}_n^\ast$ as well.  

Before proceeding with such a strategy, we first reduce functions $ H_j(t)$ to functions which do not contain any finite differences, plus some remainders which are absolutely summable. This is not just a simplification, but is crucial for obtaining almost monotonic Fourier coefficients, which we will need to carry out  our arguments.


\section{Asymptotic Representation of the Basic Sequences $a_n$ and $b_n$}
If $f(n,\omega,t)$ is a complex valued function of an integer $n$ and real variables $t,\omega$ and if $g(n, \omega)>0$ is a positive real valued function of $n, \omega$, then we write $f(n,\omega,t)=\OO{g(n,\omega)}$ to denote the fact that for every fixed $\omega>0$ there exists $M_{\omega}>0$ such that   $|f(n,\omega,t)|\leq M_{\omega}\, g(n,\omega)$ for all $t\in\Rset$. A direct inspection of our estimates reveals that, in general, $M_{\omega}\rightarrow\infty$ as $\omega\rightarrow\infty$; thus, in our results we can conclude that convergence is uniform only for $\omega$ belonging to a compact set. 
Also, to simplify our notation let us define
\begin{align}\label{eta}
\eta_n=|\s{2n-4}|+|\s{2n-3}|+|\s{2n-2}|+|\s{2n-1}|.
\end{align}
\begin{lemma}\label{basicn}
\begin{align}\label{csa}
\Re(\zt{n})&=1-\frac{\omega^2}{2\g{2n-1}^2}-\frac{\s{2n-1}+\s{2n-2}}{2\g{2n-1}}+\OO{\frac{|\s{2n-1}|}{\g{2n-1}^2}};\\
\Im(\zt{n})&=\frac{\omega}{\g{2n-1}}\left(1-\frac{\s{2n-1}+\s{2n-2}}{2\g{2n-1}}\right)+\OO{\frac{|\s{2n-1}|}{\g{2n-1}^3}};\label{sna}\\
\Re(\ztt{n})&=\frac{\omega^2}{2\g{2n-1}^2}+\frac{\ds{2n-2}}{2\g{2n-1}}+\OO{\frac{|\s{2n-1}|}{\g{2n-1}^2}};\label{csta}\\
\Im(\ztt{n})&=-\frac{\omega(\s{2n-1}+\s{2n-2})}{2\g{2n-1}^2}+\OO{\frac{|\s{2n-1}|}{\g{2n-1}^3}}.\label{snta}
\end{align}
\end{lemma}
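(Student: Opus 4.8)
The plan is to start from the exact formulas for $\Re(\zt{n})$, $\Im(\zt{n})$, $\Re(\ztt{n})$ and $\Im(\ztt{n})$ in Lemma~\ref{l1}, substitute the expression for $\epsilon_n$ from Lemma~\ref{l0}, and then expand the remaining rational expressions in $\g{2k}$, $\g{2k-1}$, $\g{2k-2}$ into ``clean'' leading terms plus controlled remainders. The key mechanism is to rewrite every occurrence of $\g{2n-2}$ and $\g{2n}$ in terms of $\g{2n-1}$ via $\g{2n-2}=\g{2n-1}-\s{2n-2}$ and $\g{2n}=\g{2n-1}+\s{2n-1}$, so that ratios like $\g{2n-2}/\g{2n-1}$ become $1-\s{2n-2}/\g{2n-1}$ and $\g{2n-1}/\g{2n}=1/(1+\s{2n-1}/\g{2n-1})$ can be expanded using the geometric series $1/(1+x)=1-x+\OO{x^2}$. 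Since $\s{2n-1}/\g{2n-1}\to 0$ by \ref{c3} and \ref{c2}, these expansions are legitimate for large $n$, and each step produces terms of the stated orders together with remainders of size $\OO{|\s{2n-1}|/\g{2n-1}^2}$ or smaller (once one also uses $\epsilon_n=\ds{2n-2}-\s{2n-2}\s{2n-1}/\g{2n-1}$ and the boundedness of $\s{n}$, which follows from \ref{c3}).

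Concretely, for \eqref{csa} I would take $\Re(\zt{n})=\frac{\g{2n-1}}{\g{2n}}\bigl(1-\frac{\omega^2}{2\g{2n-1}^2}+\frac{\epsilon_n}{2\g{2n-1}}\bigr)$, write $\frac{\g{2n-1}}{\g{2n}}=1-\frac{\s{2n-1}}{\g{2n-1}}+\OO{\frac{\s{2n-1}^2}{\g{2n-1}^2}}$, and multiply out; the cross terms $\frac{\s{2n-1}}{\g{2n-1}}\cdot\frac{\omega^2}{2\g{2n-1}^2}$ and $\frac{\s{2n-1}}{\g{2n-1}}\cdot\frac{\epsilon_n}{2\g{2n-1}}$ are $\OO{|\s{2n-1}|/\g{2n-1}^2}$ (using $|\epsilon_n|=\OO{1}$), and $\frac{\epsilon_n}{2\g{2n-1}}=\frac{\ds{2n-2}}{2\g{2n-1}}+\OO{|\s{2n-1}|/\g{2n-1}^2}$; collecting the surviving first-order piece $-\frac{\s{2n-1}}{\g{2n-1}}$ and rewriting it symmetrically, using that $\ds{2n-2}=\s{2n-1}-\s{2n-2}$ lets one replace $-\frac{\s{2n-1}}{\g{2n-1}}+\frac{\ds{2n-2}}{2\g{2n-1}}$ by $-\frac{\s{2n-1}+\s{2n-2}}{2\g{2n-1}}$, which is exactly the claimed form. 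The analogous manipulations handle \eqref{sna}, \eqref{csta}, \eqref{snta}: for \eqref{sna} expand $\frac{\omega}{\g{2n}}(1+\frac{\ds{2n-2}}{2\g{2n-1}})$ with $\frac1{\g{2n}}=\frac1{\g{2n-1}}(1-\frac{\s{2n-1}}{\g{2n-1}}+\dots)$ and again convert $-\s{2n-1}+\tfrac12\ds{2n-2}$ into $-\tfrac12(\s{2n-1}+\s{2n-2})$; for \eqref{csta} and \eqref{snta} start from \eqref{sn}(a),(b), substitute $\epsilon_n$, and replace the prefactor $\frac1{\g{2n}}$ by $\frac1{\g{2n-1}}+\OO{|\s{2n-1}|/\g{2n-1}^2}$, noting that the extra factor $\g{2n-1}/\g{2n}$ multiplying a term already of size $\OO{1/\g{2n-1}}$ only contributes to the error.

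There is essentially no deep obstacle here; this is a bookkeeping lemma, and indeed the author has flagged that these identities are machine-verified. The one point that requires a little care is making sure the remainder is uniformly $\OO{|\s{2n-1}|/\g{2n-1}^2}$ in $t$ and locally uniform in $\omega$ in the sense defined just before the lemma: the $t$-independence is automatic since $\zt{n},\ztt{n}$ do not depend on $t$, and the $\omega$-dependence of the implied constants is polynomial (coming from powers of $\omega$ and $\omega^2$ in the numerators), so it is bounded on each compact $\omega$-set, exactly matching the convention. The only genuine modelling decision is to keep the error at the level $\OO{|\s{2n-1}|/\g{2n-1}^2}$ rather than, say, $\OO{\eta_n/\g{2n-1}^2}$ — one should check that all discarded cross terms can be bounded using $|\s{2n-1}|$ alone, possibly after using $|\s{2n-2}|\le|\s{2n-1}|+|\ds{2n-2}|$ and absorbing $|\ds{2n-2}|/\g{2n-1}^2$ terms; if a cleaner bound in terms of $\eta_n$ is needed downstream that is where $\eta_n$ (defined in \eqref{eta}) will enter, but for the statement as written the $|\s{2n-1}|$ form suffices. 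I would close by remarking that, since each displayed identity is an algebraic rearrangement controlled termwise by \ref{c2} and \ref{c3} together with Lemma~\ref{l0}, the four estimates follow simultaneously.
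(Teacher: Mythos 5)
Your proposal is correct and follows essentially the same route as the paper: start from the exact expressions in Lemma~\ref{l1}, insert $\epsilon_n=\ds{2n-2}-\s{2n-2}\s{2n-1}/\g{2n-1}$ from Lemma~\ref{l0}, expand $\g{2n-1}/\g{2n}=1-\s{2n-1}/\g{2n-1}+\OO{|\s{2n-1}|/\g{2n-1}^2}$ (the paper does this by self-substitution of the identity $\g{2n-1}/\g{2n}=1-\tfrac{\s{2n-1}}{\g{2n-1}}\tfrac{\g{2n-1}}{\g{2n}}$ rather than by a geometric series, which is only a cosmetic difference), multiply out, and absorb the cross terms using boundedness of $\s{n}$, $\ds{n}$ and $\epsilon_n$, with the identity $\ds{2n-2}=\s{2n-1}-\s{2n-2}$ producing the symmetric term $-\tfrac{\s{2n-1}+\s{2n-2}}{2\g{2n-1}}$ exactly as in the paper. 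Your closing worry about needing $|\s{2n-2}|\le|\s{2n-1}|+|\ds{2n-2}|$ is unnecessary, since $\s{2n-2}$ only enters the discarded terms multiplied by $\s{2n-1}$, but this does not affect the correctness of the argument.
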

\begin{proof}
Note that
\begin{align*}
{\frac{\g{2n-1}}{\g{2n}}}=1+\frac{\g{2n-1}-\g{2n}}{\g{2n}}=1-\frac{\s{2n-1}}{\g{2n}};
\end{align*}
thus, ${{\g{2n-1}}/{\g{2n}}}\rightarrow 1$. Moreover, the above equality implies
\begin{align*}
{\frac{\g{2n-1}}{\g{2n}}}=1-\frac{\s{2n-1}}{\g{2n-1}}\frac{\g{2n-1}}{\g{2n}};
\end{align*}
substituting ${\g{2n-1}}/{\g{2n}}$ appearing in the right side of this equation with the entire right side we obtain 
\begin{align}\label{rr}
{\frac{\g{2n-1}}{\g{2n}}}=1-\frac{\s{2n-1}}{\g{2n-1}}\left(1-\frac{\s{2n-1}}{\g{2n-1}}\frac{\g{2n-1}}{\g{2n}}\right)=1-\frac{\s{2n-1}}{\g{2n-1}}+\OO{\frac{|\s{2n-1}|}{\g{2n-1}^2}}.
\end{align}
From \eqref{cs}$(a)$ we obtain
\begin{align} \label{css}\Re(\zt{n})=\left(1-\frac{\s{2n-1}}{\g{2n-1}}\frac{\g{2n-1}}{\g{2n}}\right)\left(1-\frac{\omega^2}{2\,\g{2n-1}^2}+\frac{\epsilon_{n}}{2\,\g{2n-1}}\right);
\end{align}
Substituting \eqref{epsilon} and \eqref{rr} in \eqref{css} and after performing the corresponding multiplications we obtain \eqref{csa}.
Similarly, from \eqref{cs}$(b)$ we obtain
\begin{align}
\Im(\zt{n})=\frac{\omega}{\g{2n-1}}\left(1-\frac{\s{2n-1}}{\g{2n-1}}\frac{\g{2n-1}}{\g{2n}}\right)
\left(1+\frac{\s{2n-1}-\s{2n-2}}{2\,\g{2n-1}}\right);
\end{align}
substituting \eqref{rr} we obtain \eqref{sna}.
Similarly, from \eqref{sn}$(a)$ we obtain
\begin{align}\label{snn}
\Re(\ztt{n})=\frac{\omega}{2\g{2n-1}}\frac{\g{2n-1}}{\g{2n}}\left(\frac{\omega}{\,\g{2n-1}}+\frac{\epsilon_n}{\omega}\right).
\end{align}
Substituting \eqref{epsilon} and \eqref{rr} in \eqref{snn} we obtain \eqref{csta}.
Finally, from \eqref{sn}$(b)$ we obtain 
\begin{align}\label{snnn}
\Im(\ztt{n})=-\frac{\omega\left(\s{2n-1}+\s{2n-2}\right)}{2\,\g{2n-1}^2}\frac{\g{2n-1}}{\g{2n}}.
\end{align}
Substituting \eqref{rr} in \eqref{snnn} we obtain \eqref{snta}.\footnote{\emph{Mathematica} verification of Lemma~\ref{basicn} is in the file \texttt{OP/symbolic/2\_lemma\_\ref{basicn}.nb}.}
\end{proof}

We now want to obtain asymptotic representations of $\zt{n-1}$ and $\ztt{n-1}$ in terms of $\g{2n-1}$ and the finite differences. 
\begin{lemma}\label{basicm}
\begin{align}
\cs{n-1}&=1-\frac{\omega^2}{2\g{2n-1}^2}-\frac{\s{2n-3}+\s{2n-4}}{2\g{2n-1}}+\OO{\frac{\eta_n}{\g{2n-1}^2}};\label{csma}\\
\sn{n-1}&=\frac{\omega}{\g{2n-1}}\left(1+\frac{\s{2n-3}+\s{2n-4}}{2\g{2n-1}}+\frac{\ds{2n-3}+\ds{2n-4}}{\g{2n-1}}\right)+\OO{\frac{\eta_n}{\g{2n-1}^3}};\\
\cst{n-1}&=\frac{\omega^2}{2\g{2n-1}^2}+\frac{\ds{2n-4}}{2\g{2n-1}}+\OO{\frac{\eta_n}{\g{2n-1}^2}};\\
\snt{n-1}&=-\frac{\omega(\s{2n-4}+\s{2n-3})}{2\g{2n-1}^2}+\OO{\frac{\eta_n}{\g{2n-1}^3}}.\label{sntma}
\end{align}
\end{lemma}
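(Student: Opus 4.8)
The strategy is to re-run the derivation of Lemma~\ref{basicn}, but now expressing $\zt{n-1}$ and $\ztt{n-1}$ — whose definitions \eqref{basic1}, \eqref{basic2} naturally involve $\g{2n-3}$, $\g{2n-4}$ — in terms of the single reference quantity $\g{2n-1}$. The point is that Lemma~\ref{basicn} already gives representations of $\cs{n-1}, \sn{n-1}, \cst{n-1}, \snt{n-1}$ in terms of $\g{2n-3}$ and the differences $\s{2n-3}, \s{2n-4}, \ds{2n-4}$; the only new work is to trade $\g{2n-3}$ for $\g{2n-1}$ at the cost of the extra first differences $\s{2n-2}$ and $\s{2n-3}$, whose combined absolute values (together with $|\s{2n-1}|$, $|\s{2n-4}|$) are exactly what the error term $\OO{\eta_n/\g{2n-1}^{k}}$ records, $\eta_n$ being defined in \eqref{eta}.

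Concretely, I would first establish, by the same self-substitution trick used for \eqref{rr}, the ratio estimate
\[
\frac{\g{2n-3}}{\g{2n-1}} = 1 - \frac{\s{2n-2}+\s{2n-3}}{\g{2n-1}} + \OO{\frac{\eta_n}{\g{2n-1}^2}},
\]
and correspondingly $1/\g{2n-3} = (1 + \OO{\eta_n/\g{2n-1}})/\g{2n-1}$ and $1/\g{2n-3}^2 = (1+\OO{\eta_n/\g{2n-1}})/\g{2n-1}^2$. I would also record that $\epsilon_{n-1} = \ds{2n-4} - \s{2n-4}\s{2n-3}/\g{2n-3} = \ds{2n-4} + \OO{\eta_n/\g{2n-1}}$ via Lemma~\ref{l0}, and that each of $\s{2n-3}$, $\ds{2n-4}$, $\ds{2n-3}$ is itself $\OO{\eta_n}$, so that any product of such a difference with a $1/\g{}$-factor can be absorbed into the stated remainder. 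Then I would take the four exact formulas of Lemma~\ref{l1} with index $n-1$ in place of $n$, substitute these ratio and $\epsilon$ estimates, multiply out, and collect terms: the leading $1$, the $-\omega^2/(2\g{2n-1}^2)$, the first-order difference terms $\pm(\s{2n-3}+\s{2n-4})/(2\g{2n-1})$ and $\ds{2n-4}/(2\g{2n-1})$, with everything of combined order $\eta_n/\g{2n-1}^2$ (respectively $\eta_n/\g{2n-1}^3$ for the imaginary parts, which carry an extra $1/\g{}$) swept into the $\OO{\cdot}$ term. This yields \eqref{csma}--\eqref{sntma} directly; the factor-of-two discrepancy between the $\s$-coefficient in \eqref{sna} and that in the second equation of Lemma~\ref{basicm} is precisely the bookkeeping effect of shifting the reference index from $\g{2n-3}$ to $\g{2n-1}$, and should fall out of the multiplication.

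The main obstacle is purely organisational rather than conceptual: one must keep careful track of which first and second differences appear at which order, since $\eta_n$ bundles four consecutive $|\s{}|$'s and the error terms must genuinely be dominated by $\eta_n$ times the appropriate power of $1/\g{2n-1}$ — in particular one needs that a difference of differences like $\ds{2n-3}$ does not sneak in at first order where only $\ds{2n-4}$ is allowed, and that cross terms such as $\s{2n-3}\cdot\omega^2/\g{2n-1}^2$ are correctly classified as $\OO{\eta_n/\g{2n-1}^2}$. As with Lemmas~\ref{l0}, \ref{l1} and \ref{basicn}, the algebra is routine enough to be (and, per the paper's footnotes, has been) checked symbolically; I would present the ratio estimate and the substitution explicitly and leave the final expansion as a direct computation.
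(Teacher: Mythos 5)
Your proposal is correct and matches the paper's own proof in essence: the paper likewise establishes the ratio estimate $\g{2n-1}/\g{2n-3}=1+\frac{\s{2n-2}+\s{2n-3}}{\g{2n-1}}+\OO{\frac{\eta_n}{\g{2n-1}^2}}$ by the same self-substitution trick, then trades $\g{2n-3}$ for $\g{2n-1}$ in the index-shifted formulas and absorbs all products of differences into $\OO{\eta_n/\g{2n-1}^2}$ (resp.\ $\OO{\eta_n/\g{2n-1}^3}$), leaving the remaining algebra as a routine (symbolically verified) expansion. Whether you start from the exact formulas of Lemma~\ref{l1} at index $n-1$ or, as the paper does, from Lemma~\ref{basicn} with $n$ replaced by $n-1$ is immaterial, since the latter was itself obtained from the former by the same device.
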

\begin{proof}
We first obtain
\begin{align*}
\frac{\g{2n-1}}{\g{2n-3}}=1+\frac{\g{2n-1}-\g{2n-3}}{\g{2n-3}}=1+\frac{\s{2n-2}+\s{2n-3}}{\g{2n-3}}=1+\frac{\s{2n-2}+\s{2n-3}}{\g{2n-1}}\frac{\g{2n-1}}{\g{2n-3}}.
\end{align*}
Thus, also ${\g{2n-1}}/{\g{2n-3}}\rightarrow 1$ and substituting ${\g{2n-1}}/{\g{2n-3}}$ in the right side of the last equality with the entire right side of this equality, after a corresponding multiplication we obtain 
\begin{align}\label{rr3}
\frac{\g{2n-1}}{\g{2n-3}}=1+\frac{\s{2n-2}+\s{2n-3}}{\g{2n-1}}+\OO{\frac{\eta_n}{\g{2n-1}^2}}.
\end{align}
Substituting $n$ with $n-1$ in equation \eqref{csa} we obtain
\begin{align*}
\Re(\zt{n-1})&=1-\frac{\omega^2}{2\g{2n-3}^2}-\frac{\s{2n-3}+\s{2n-4}}{2\g{2n-3}}+\OO{\frac{|\s{2n-3}|}{\g{2n-3}^2}}\\
&=1-\frac{\omega^2}{2\g{2n-1}^2}\frac{\g{2n-1}^2}{\g{2n-3}^2}-\frac{\s{2n-3}+\s{2n-4}}{2\g{2n-1}}\frac{\g{2n-1}}{\g{2n-3}}+\OO{\frac{|\s{2n-3}|}{\g{2n-1}^2}}
\end{align*}
Substituting \eqref{rr3} in the above equation, after the corresponding multiplications we obtain \eqref{csma}. Other equalities are proved in an identical manner.\footnote{\emph{Mathematica} symbolic verification of Lemma~\ref{basicm} is in the file \texttt{OP/symbolic/3\_lemma\_\ref{basicm}.nb};  numerical ``confirmations'' of  Lemma~\ref{basicn} and Lemma~\ref{basicm} are in the file \texttt{OP/numerical/1\_lemmas\_\ref{basicn}\_and\_\ref{basicm}.nb}. {Of course, our numerical tests are not a part of the proofs; however, given the complexities of some of the formulas involved, they provide reassurance that nothing has been overlooked in derivations of asymptotic estimates.} Test families include five families defined by $\g{n}=(n+1)^p$ for $p=1/100,\ 1/4,\ 1/2,\ 3/4,\ 99/100$; the remaining five families are obtained from the same recursion coefficients by going a few steps backwards and then forward again; in this way one obtains non-monotonic but \emph{almost increasing} sequences; for details see any of the Mathematica files in the folder \texttt{OP/numerical}.}
\end{proof}

\section{Asymptotic behaviour of $H_j(t)$}

The following lemma follows directly from Lemma \ref{basicn} and Lemma \ref{basicm} by substitution and some simplification.
\begin{lemma}\label{ztztt}
\begin{align}\begin{split}
\zt{n}+\ztt{n}{\e}^{-\ii t}=1-\frac{\omega^2}{2\g{2n-1}^2}+\frac{\ii\,\omega}{\g{2n-1}}+\frac{\omega^2}{2\g{2n-1}^2}{\e}^{-\ii t}+\frac{\ds{2n-2}\cos t}{2\g{2n-1}}\\-\frac{\s{2n-2}+\s{2n-1}}{2\g{2n-1}}-\frac{\ii\,\ds{2n-2}\sin t}{2\g{2n-1}}+\OO{\frac{\eta_n}{\g{2n-1}^2}};\label{zt1}\end{split}
\end{align}
\begin{align}\begin{split}
\zt{n-1}-\overline{\ztt{n-1}}{\e}^{\ii t}=1-\frac{\omega^2}{2\g{2n-1}^2}+\frac{\ii\,\omega}{\g{2n-1}}-\frac{\omega^2}{2\g{2n-1}^2}{\e}^{\ii t}-\frac{\ds{2n-4}\cos t}{2\g{2n-1}}\\-\frac{\s{2n-4}+\s{2n-3}}{2\g{2n-1}}-\frac{\ii\,\ds{2n-4}\sin t}{2\g{2n-1}}+\OO{\frac{\eta_n}{\g{2n-1}^2}}.\label{ztt1}\end{split}
\end{align}\hfill$\Box$
\end{lemma}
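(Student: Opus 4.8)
The plan is to prove both displays by the same mechanical route: substitute the real and imaginary parts supplied by Lemma~\ref{basicn} (for $\zt{n}$, $\ztt{n}$) and Lemma~\ref{basicm} (for $\zt{n-1}$, $\ztt{n-1}$), expand $\e^{\pm\ii t}=\cos t\pm\ii\sin t$, and collect like terms. Throughout I would invoke the convention introduced at the start of Section~5, under which an estimate $f(n,\omega,t)=\OO{g(n,\omega)}$ is uniform in $t\in\Rset$; since $|\e^{\pm\ii t}|=|\cos t|=|\sin t|\le 1$, any such bounded trigonometric factor multiplying an $\OO{\cdot}$ term may simply be dropped, and this is what lets the error bounds stay in the clean form $\OO{\eta_n/\g{2n-1}^2}$.

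First I would write $\zt{n}=\Re(\zt{n})+\ii\,\Im(\zt{n})$ using \eqref{csa} and \eqref{sna}. Because $\omega$ is fixed and $|\s{2n-1}|,|\s{2n-2}|\le\eta_n$, the correction $-\tfrac{\omega(\s{2n-1}+\s{2n-2})}{2\g{2n-1}^2}$ to the leading imaginary part $\tfrac{\omega}{\g{2n-1}}$, together with all error terms of Lemma~\ref{basicn}, is $\OO{\eta_n/\g{2n-1}^2}$, so $\zt{n}=1-\tfrac{\omega^2}{2\g{2n-1}^2}-\tfrac{\s{2n-1}+\s{2n-2}}{2\g{2n-1}}+\tfrac{\ii\omega}{\g{2n-1}}+\OO{\eta_n/\g{2n-1}^2}$. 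The only step that is not completely automatic — and it is really just a choice of bookkeeping — is to peel off the leading term $\tfrac{\omega^2}{2\g{2n-1}^2}$ of $\Re(\ztt{n})$ before multiplying by $\e^{-\ii t}$: writing $\ztt{n}=\tfrac{\omega^2}{2\g{2n-1}^2}+r_n$, equations \eqref{csta} and \eqref{snta} give $r_n=\tfrac{\ds{2n-2}}{2\g{2n-1}}+\OO{\eta_n/\g{2n-1}^2}$, since $\Im(\ztt{n})$ is on its own $\OO{\eta_n/\g{2n-1}^2}$. Hence $\ztt{n}\e^{-\ii t}=\tfrac{\omega^2}{2\g{2n-1}^2}\e^{-\ii t}+\tfrac{\ds{2n-2}\cos t}{2\g{2n-1}}-\tfrac{\ii\,\ds{2n-2}\sin t}{2\g{2n-1}}+\OO{\eta_n/\g{2n-1}^2}$, and adding this to the expression for $\zt{n}$ gives \eqref{zt1} term for term.

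Identity \eqref{ztt1} follows in exactly the same way from \eqref{csma}–\eqref{sntma}: one forms $\overline{\ztt{n-1}}=\cst{n-1}-\ii\,\snt{n-1}=\tfrac{\omega^2}{2\g{2n-1}^2}+\tfrac{\ds{2n-4}}{2\g{2n-1}}+\OO{\eta_n/\g{2n-1}^2}$ (with $\snt{n-1}$ absorbed into the error), multiplies by $\e^{\ii t}$ after peeling off the leading $\tfrac{\omega^2}{2\g{2n-1}^2}$ as before, and subtracts the result from $\zt{n-1}=\cs{n-1}+\ii\,\sn{n-1}=1-\tfrac{\omega^2}{2\g{2n-1}^2}-\tfrac{\s{2n-3}+\s{2n-4}}{2\g{2n-1}}+\tfrac{\ii\omega}{\g{2n-1}}+\OO{\eta_n/\g{2n-1}^2}$.

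There is no genuine obstacle here: the substantive asymptotics are already contained in Lemmas~\ref{basicn} and~\ref{basicm}, and what remains is only to verify that every discarded quantity — the products of $\tfrac{\omega}{\g{2n-1}}$ with a first difference over $\g{2n-1}$, the full imaginary parts $\Im(\ztt{n})$ and $\snt{n-1}$, and the $\OO{|\s{2n-1}|/\g{2n-1}^2}$ errors of Lemma~\ref{basicn} — is dominated by $\eta_n/\g{2n-1}^2$, which is immediate from $\omega$ being fixed and $|\s{2n-j}|\le\eta_n$ for $j=1,2,3,4$ (note that $|\ds{2n-2}|,|\ds{2n-3}|,|\ds{2n-4}|\le\eta_n$ as well, so the retained $\tfrac{\ds{}\cos t}{2\g{2n-1}}$ and $\tfrac{\ii\,\ds{}\sin t}{2\g{2n-1}}$ are precisely the terms of order $1/\g{2n-1}$ that one must carry). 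If a safety net is wanted, the two collected identities can be checked symbolically against the raw definitions \eqref{basic1}–\eqref{basic2}, as is done for the neighbouring lemmas.
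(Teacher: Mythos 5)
Your proposal is correct and follows exactly the paper's route: the paper proves this lemma by substituting the expansions of Lemma~\ref{basicn} and Lemma~\ref{basicm} and simplifying, which is precisely what you do, with the error bookkeeping ($|\s{2n-j}|,|\ds{2n-j}|\le\eta_n$, uniformity in $t$ from $|\e^{\pm\ii t}|=1$) spelled out in more detail than the paper bothers to.
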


Taking logarithms of both sides of each of the above two equations and using a first order expansion of the logarithm function we obtain the following lemma.\footnote{\emph{Mathematica}  symbolic verifications of Lemma~\ref{ztztt} and Lemma~\ref{lgztztt} are in file \texttt{OP/symbolic/4\_lemmas\_\ref{ztztt}\_and\_\ref{lgztztt}.nb}; numerical corroborations of these two lemmata are included in file \texttt{OP/numerical/2\_lemmas\_12\_13\_and\_14.nb.}}

\begin{lemma}\label{lgztztt}
\begin{align}\label{zt11}
\ln(\zt{n}+\ztt{n}{\e}^{-\ii t})=\ln\left(1-\frac{\omega^2}{2\g{2n-1}^2}+\frac{\ii\,\omega}{\g{2n-1}}+\frac{\omega^2}{2\g{2n-1}^2}{\e}^{-\ii t}\right)+\frac{\ds{2n-2}\cos t}{2\g{2n-1}}\\-\frac{\s{2n-2}+\s{2n-1}}{2\g{2n-1}}-\frac{\ii\,\ds{2n-2}\sin t}{2\g{2n-1}}+\OO{\frac{\eta_n}{\g{2n-1}^2}};
\nonumber\\
\ln(\zt{n-1}-\overline{\ztt{n-1}}{\e}^{\ii t})=\ln\left(1-\frac{\omega^2}{2\g{2n-1}^2}+\frac{\ii\,\omega}{\g{2n-1}}-\frac{\omega^2}{2\g{2n-1}^2}{\e}^{\ii t}\right)-\frac{\ds{2n-4}\cos t}{2\g{2n-1}}\label{ztt11}\\-\frac{\s{2n-4}+\s{2n-3}}{2\g{2n-1}}-\frac{\ii\,\ds{2n-4}\sin t}{2\g{2n-1}}+\OO{\frac{\eta_n}{\g{2n-1}^2}}.\nonumber
\end{align}\hfill$\Box$
\end{lemma}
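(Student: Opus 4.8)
The plan is to derive each of \eqref{zt11} and \eqref{ztt11} directly from the matching line of Lemma~\ref{ztztt} by a first-order Taylor expansion of the logarithm. Write the right-hand side of \eqref{zt1} as $Z_n(t)+W_n(t)$, where
\[
Z_n(t)=1-\frac{\omega^2}{2\g{2n-1}^2}+\frac{\ii\,\omega}{\g{2n-1}}+\frac{\omega^2}{2\g{2n-1}^2}{\e}^{-\ii t}
\]
is the term that is to appear inside the logarithm on the right of \eqref{zt11}, and
\[
W_n(t)=\frac{\ds{2n-2}\cos t}{2\g{2n-1}}-\frac{\s{2n-2}+\s{2n-1}}{2\g{2n-1}}-\frac{\ii\,\ds{2n-2}\sin t}{2\g{2n-1}}+\OO{\frac{\eta_n}{\g{2n-1}^2}}
\]
gathers the finite-difference terms together with the error term already present in Lemma~\ref{ztztt}. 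Then $\ln(\zt{n}+\ztt{n}{\e}^{-\ii t})=\ln Z_n(t)+\ln\bigl(1+W_n(t)/Z_n(t)\bigr)$, and the task reduces to controlling $W_n(t)/Z_n(t)$.

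First I would record two elementary observations. Since $\ds{2n-2}=\s{2n-1}-\s{2n-2}$, the quantities $|\ds{2n-2}|$ and $|\s{2n-2}+\s{2n-1}|$ are both bounded by $\eta_n$ (cf.\ \eqref{eta}); hence $W_n(t)=\OO{\eta_n/\g{2n-1}}$ uniformly in $t$, and $\eta_n\to0$ by condition \ref{c3}. Next, $\g{2n-1}\to\infty$ by condition \ref{c2}, so for all large $n$ the point $Z_n(t)$ lies in a fixed small disc centred at $1$, uniformly in $t$; in particular it stays off the branch cut $(-\infty,0]$, so $\ln Z_n(t)$ is well defined, $|Z_n(t)|^{-1}$ is bounded, and $1/Z_n(t)=1+\OO{1/\g{2n-1}}$. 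Consequently $W_n(t)/Z_n(t)=W_n(t)+\OO{\eta_n/\g{2n-1}^2}$, this tends to $0$ uniformly in $t$, and the expansion $\ln(1+x)=x+\OO{x^2}$ gives
\[
\ln\bigl(1+W_n(t)/Z_n(t)\bigr)=W_n(t)+\OO{\eta_n/\g{2n-1}^2}+\OO{\eta_n^2/\g{2n-1}^2}=W_n(t)+\OO{\eta_n/\g{2n-1}^2},
\]
the last equality because $\eta_n\le1$ for large $n$ lets $\eta_n^2$ be absorbed into $\eta_n$. Substituting back the explicit form of $W_n(t)$ produces \eqref{zt11}.

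Equation \eqref{ztt11} follows in exactly the same way from \eqref{ztt1}, with $Z_n(t)$ replaced by $1-\frac{\omega^2}{2\g{2n-1}^2}+\frac{\ii\,\omega}{\g{2n-1}}-\frac{\omega^2}{2\g{2n-1}^2}{\e}^{\ii t}$ and the finite-difference indices shifted down by two. There is no genuine obstacle here; the only points needing care are (i) confirming that every finite-difference coefficient occurring in $W_n(t)$ is dominated by $\eta_n$, so that the argument of the logarithm really is a small perturbation of something near $1$, and (ii) ensuring that the quadratic remainder from $\ln(1+x)$ does not exceed the advertised $\OO{\eta_n/\g{2n-1}^2}$, which is exactly where $\eta_n\to0$ enters. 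Both are guaranteed by conditions \ref{c2} and \ref{c3}.
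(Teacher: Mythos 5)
Your argument is correct and is essentially the paper's own proof: the paper likewise obtains Lemma~\ref{lgztztt} from Lemma~\ref{ztztt} by a first-order expansion of the logarithm, with the finite-difference terms and the $\OO{\eta_n/\g{2n-1}^2}$ remainder absorbed exactly as you describe. Your explicit bookkeeping of $Z_n(t)$, $W_n(t)$, the branch-cut issue, and the quadratic remainder simply spells out the details the paper leaves to symbolic verification.
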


To obtain an asymptotic representation of $F_n(t)$ we need the following technical lemma.
\begin{lemma}\label{twologs}
\begin{align}
\ln(|\zt{n-1}|^2-|\ztt{n-1}|^2)&=-\frac{\s{2n-4}+\s{2n-3}}{\g{2n-1}}+\OO{\frac{\eta_n}{\g{2n-1}^2}};\label{log1}\\
\ln\lambda_{n-1}&=\frac{\s{2n-4}+2\s{2n-3}+\s{2n-2}}{2\g{2n-1}}+\OO{\frac{\eta_n}{\g{2n-1}^2}}\label{log2}.
\end{align}
\end{lemma}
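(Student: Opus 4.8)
The plan is to obtain both formulas by reducing to identities already in hand, expanding everything in powers of $1/\g{2n-1}$, and using condition \ref{c3} (which gives $\eta_n\to0$, hence $\eta_n^2=\OO{\eta_n}$ for all large $n$) to collapse every quadratic remainder into $\OO{\eta_n/\g{2n-1}^2}$.

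For \eqref{log1} I would \emph{not} go through Lemma~\ref{basicm}, but instead start from equation \eqref{bb1} with $n$ replaced by $n-1$, which gives the \emph{exact} identity $\am{n-1}^2-\amt{n-1}^2=\g{2n-4}/\g{2n-2}=1-(\s{2n-4}+\s{2n-3})/\g{2n-2}$ (the last step because $\g{2n-2}-\g{2n-4}=\s{2n-3}+\s{2n-4}$); the advantage of this route is that the troublesome $\OO{1/\g{2n-1}^4}$ terms, which separately occur in $|\zt{n-1}|^2$ and $|\ztt{n-1}|^2$ and need \emph{not} be of the form $\OO{\eta_n/\g{2n-1}^2}$, have already cancelled. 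Since $\frac{1}{\g{2n-2}}-\frac{1}{\g{2n-1}}=\frac{\s{2n-2}}{\g{2n-1}\g{2n-2}}=\OO{|\s{2n-2}|/\g{2n-1}^2}$ and $|\s{2n-4}+\s{2n-3}|\leq\eta_n$, one gets $\am{n-1}^2-\amt{n-1}^2=1-(\s{2n-4}+\s{2n-3})/\g{2n-1}+\OO{\eta_n/\g{2n-1}^2}$; this tends to $1$, so applying $\ln(1+x)=x+\OO{x^2}$ with $x=\OO{\eta_n/\g{2n-1}}$ yields \eqref{log1}.

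For \eqref{log2} I would expand the quotient defining $\lambda_{n-1}$, namely $\bigl(\tfrac{1}{\g{2n-4}}+\tfrac{1}{\g{2n-3}}\bigr)\big/\bigl(\tfrac{1}{\g{2n-2}}+\tfrac{1}{\g{2n-1}}\bigr)$, by writing each $\tfrac{1}{\g{2n-k}}=\tfrac{1}{\g{2n-1}}\cdot\tfrac{\g{2n-1}}{\g{2n-k}}$ and invoking the same iteration that produced \eqref{rr} and \eqref{rr3}: for $k=2,3,4$ one has $\tfrac{\g{2n-1}}{\g{2n-k}}=1+\tfrac{\g{2n-1}-\g{2n-k}}{\g{2n-1}}+\OO{\eta_n/\g{2n-1}^2}$, where $\g{2n-1}-\g{2n-k}$ is a sum of at most three consecutive first differences, hence $\OO{\eta_n}$. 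This gives numerator $\tfrac{2}{\g{2n-1}}\bigl(1+\tfrac{\s{2n-4}+2\s{2n-3}+2\s{2n-2}}{2\g{2n-1}}+\OO{\eta_n/\g{2n-1}^2}\bigr)$ and denominator $\tfrac{2}{\g{2n-1}}\bigl(1+\tfrac{\s{2n-2}}{2\g{2n-1}}+\OO{\eta_n/\g{2n-1}^2}\bigr)$; dividing with $\tfrac{1}{1+x}=1-x+\OO{x^2}$ and simplifying the linear term gives $\lambda_{n-1}=1+\tfrac{\s{2n-4}+2\s{2n-3}+\s{2n-2}}{2\g{2n-1}}+\OO{\eta_n/\g{2n-1}^2}$, and one more logarithm produces \eqref{log2}.

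The only step that genuinely needs care — and the natural place for a slip — is this systematic replacement of the various $\g{2n-k}$ in denominators by $\g{2n-1}$ together with the verification that each correction is really $\OO{\eta_n/\g{2n-1}^2}$; this rests entirely on the elementary observation that $\g{2n-1}-\g{2n-k}$ is a bounded sum of the $\s{j}$'s, so its square is $\OO{\eta_n}$ once $\s{j}\to0$. Beyond this bookkeeping nothing nontrivial occurs: no cancellation beyond what is already visible in the linear terms is required, so (as the accompanying footnote records) the computation is readily checked symbolically.
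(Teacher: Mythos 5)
Your proof is correct and follows essentially the paper's own route: for \eqref{log1} the paper likewise starts from \eqref{bb1} with $n$ replaced by $n-1$ (so the cancellation you highlight is exactly how the paper avoids the uncontrolled $\OO{1/\g{2n-1}^4}$ terms), and for \eqref{log2} your direct expansion of numerator and denominator of $\lambda_{n-1}$ is just a reorganization of the paper's expansion of the three factors in \eqref{lambda1}, with the same use of $\eta_n^2=\OO{\eta_n}$ and a first-order logarithm at the end.
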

\begin{proof}
Substituting $n$ by $n-1$ in \eqref{bb1} we obtain
\begin{align*}
|\zt{n-1}|^2-|\ztt{n-1}|^2&=1-\frac{\s{2n-4}+\s{2n-3}}{\g{2n-2}}=
1-\frac{\s{2n-4}+\s{2n-3}}{\g{2n-1}}\left(1+\frac{\s{2n-2}}{\g{2n-2}}\right)\\
&=1-\frac{\s{2n-4}+\s{2n-3}}{\g{2n-1}}+\OO{\frac{|\s{2n-2}|}{\g{2n-1}^2}}.
\end{align*}
A first order Taylor's formula for the logarithm now yields \eqref{log1}. To prove \eqref{log2}, 
\begin{align}
&\frac{\frac{1}{\g{2n-4}}+\frac{1}{\g{2n-3}}}{\frac{1}{\g{2n-2}}+\frac{1}{\g{2n-1}}}=
\frac{\g{2n-1}}{\g{2n-3}}\frac{\g{2n-2}}{\g{2n-4}}\frac{\g{2n-3}+\g{2n-4}}{\g{2n-1}+\g{2n-2}}\nonumber\\
&\hspace*{5mm}=\left(1+\frac{\g{2n-1}-\g{2n-3}}{\g{2n-3}}\right)\left(1+\frac{\g{2n-2}-\g{2n-4}}{\g{2n-4}}\right)\nonumber\\
&\hspace*{30mm}\left(1+\frac{\g{2n-3}-\g{2n-1}+\g{2n-4}-\g{2n-2}}{\g{2n-1}+\g{2n-2}}\right)\nonumber\\
&\hspace*{5mm}=\left(1+\frac{\s{2n-3}+\s{2n-2}}{\g{2n-1}}\frac{\g{2n-1}}{\g{2n-3}}\right)\left(1+\frac{\s{2n-3}+\s{2n-4}}{\g{2n-1}}\frac{\g{2n-1}}{\g{2n-4}}\right)\nonumber\\
&\hspace*{30mm}\left(1-\frac{2\s{2n-3}+\s{2n-2}+\s{2n-4}}{2\g{2n-1}}\frac{2\g{2n-1}}{\g{2n-1}+\g{2n-2}}\right).\label{lambda1}
\end{align}
By \eqref{rr3} we have ${\g{2n-1}}/{\g{2n-3}}=1+\OO{\frac{\eta_n}{\g{2n-1}}};$ in a similar way we also obtain
\begin{align*}
\frac{\g{2n-1}}{\g{2n-4}}=1+\OO{\frac{\eta_n}{\g{2n-1}}};&&
\frac{2\g{2n-1}}{\g{2n-1}+\g{2n-2}}=1+\OO{\frac{\eta_n}{\g{2n-1}}}.&
\end{align*}
After substituting these in \eqref{lambda1} and after performing the corresponding multiplications, keeping only terms which are not necessarily absolutely summable, we obtain 
\begin{align}\label{alambda}
\lambda_{n-1}&=1+\frac{\s{2n-3}+\s{2n-2}}{\g{2n-1}}+\frac{\s{2n-3}+\s{2n-4}}{\g{2n-1}}-\frac{2\s{2n-3}+\s{2n-2}+\s{2n-4}}{2\g{2n-1}}+\OO{\frac{\eta_n}{\g{2n-1}^2}}\nonumber\\
&=1+\frac{\s{2n-4}+2\s{2n-3}+\s{2n-2}}{2\g{2n-1}}+\OO{\frac{\eta_n}{\g{2n-1}^2}}.
\end{align}
Applying now a first order approximation for the logarithm function yields  \eqref{log2}.\footnote{\emph{Mathematica } numerical confirmation of Lemma \ref{twologs} is included in file \texttt{OP/numerical/2\_lemmas\_\ref{ztztt}\_\ref{lgztztt}\_and\_\ref{twologs}.nb}.}\end{proof}

To simplify our formulas let us set
\begin{align}\begin{split}\label{fff}
f(x,t)=\ln\left(1-\frac{x^2}{2}-\ii x+\frac{x^2}{2}{\e}^{\ii t}\right)+\ln\left(1-\frac{x^2}{2}+\ii x+\frac{x^2}{2}{\e}^{-\ii t}\right)
\\-\ln\left(1-\frac{x^2}{2}-\ii x-\frac{x^2}{2}{\e}^{-\ii t}\right)-\ln\left(1-\frac{x^2}{2}+\ii x-\frac{x^2}{2}{{\e}^{\ii t}}\right);
\end{split}
\end{align}
\begin{align}\begin{split}
\label{ggg}g(x,t)=\ii \ln\left(1-\frac{x^2}{2}-\ii x+\frac{x^2}{2}{{\e}^{\ii t}}\right)-\ii \ln\left(1-\frac{x^2}{2}+\ii x+\frac{x^2}{2}{\e}^{-\ii t}\right)\\+\ii \ln\left(1-\frac{x^2}{2}-\ii x-\frac{x^2}{2}{\e}^{-\ii t}\right)-\ii \ln\left(1-\frac{x^2}{2}+\ii x-\frac{x^2}{2}{\e}^{\ii t}\right);
\end{split}
\end{align}
\begin{align}\label{hhh}
h(x,t)=&\frac{f(x,t)}{g(x,t)}.
\end{align}

\begin{lemma}\label{FG1}
\begin{align}
F_n(t)=&f\!\left(\frac{\omega}{\g{2n-1}},t\right)+\frac{(\ds{2n-4}+\ds{2n-2})\cos t}{\g{2n-1}}-\frac{\ds{2n-3}+\ds{2n-2}}{\g{2n-1}}+\OO{\frac{\eta_n}{\g{2n-1}^2}};\label{FFF}\\
G_n(t)=&g\!\left(\frac{\omega}{\g{2n-1}},t\right)-\frac{(\ds{2n-4}+\ds{2n-2})\sin t}{\g{2n-1}}+\OO{\frac{\eta_n}{\g{2n-1}^2}}.\label{GGG}
\end{align}
\end{lemma}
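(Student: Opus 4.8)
The plan is to obtain \eqref{FFF} and \eqref{GGG} directly from Lemma~\ref{lgztztt} and Lemma~\ref{twologs} by substituting the asymptotic representations of the four logarithms into the definitions \eqref{*1} and \eqref{*2} of $F_n(t)$ and $G_n(t)$, recognizing that the leading logarithmic terms assemble precisely into $f(\omega/\g{2n-1},t)$ and $g(\omega/\g{2n-1},t)$, and then collecting the first-order finite-difference corrections. First I would set $x=\omega/\g{2n-1}$ and observe that the four ``pure'' logarithms appearing in \eqref{zt11} and \eqref{ztt11}, namely $\ln(1-\tfrac{x^2}{2}+\ii x+\tfrac{x^2}{2}\e^{-\ii t})$ and $\ln(1-\tfrac{x^2}{2}+\ii x-\tfrac{x^2}{2}\e^{\ii t})$ together with their conjugate-argument counterparts, are exactly the four logarithms whose appropriate signed sum defines $f(x,t)$ in \eqref{fff} and whose $\ii$-weighted signed sum defines $g(x,t)$ in \eqref{ggg}; this identification is just a matter of matching \eqref{*1}--\eqref{*2} term by term against \eqref{fff}--\eqref{ggg} after using the conjugation rule $\overline{\ln z}=\ln\overline z$ valid off the branch cut.

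Next I would track the finite-difference remainders. From \eqref{zt11} the logarithm $\ln(\zt{n}+\ztt{n}\e^{-\ii t})$ contributes, beyond its pure part, the terms $\tfrac{\ds{2n-2}\cos t}{2\g{2n-1}}-\tfrac{\s{2n-2}+\s{2n-1}}{2\g{2n-1}}-\tfrac{\ii\,\ds{2n-2}\sin t}{2\g{2n-1}}$ modulo $\OO{\eta_n/\g{2n-1}^2}$, and its conjugate-argument version $\ln(\overline{\zt{n}}+\overline{\ztt{n}}\e^{\ii t})$ contributes the complex conjugate of this, i.e.\ the same real parts and the opposite imaginary part. Similarly \eqref{ztt11} gives for $\ln(\zt{n-1}-\overline{\ztt{n-1}}\e^{\ii t})$ the extra terms $-\tfrac{\ds{2n-4}\cos t}{2\g{2n-1}}-\tfrac{\s{2n-4}+\s{2n-3}}{2\g{2n-1}}-\tfrac{\ii\,\ds{2n-4}\sin t}{2\g{2n-1}}$, again with its conjugate version supplying the same real parts and the negated imaginary part. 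Adding these according to the sign pattern in \eqref{*1}: the $\ln(\overline{\zt{n}}+\dots)$ and $\ln(\zt{n}+\dots)$ terms enter with $+$, the two $\zt{n-1}$ logarithms with $-$, so the $\ii\,\ds{}\sin t$ pieces cancel in pairs (conjugates), the $\ds{}\cos t$ pieces double and, after the overall factor handling, produce $\tfrac{(\ds{2n-4}+\ds{2n-2})\cos t}{\g{2n-1}}$, and the $\s{}$-pieces combine with the $\ln(|\zt{n-1}|^2-|\ztt{n-1}|^2)$ term and the $\ln\lambda_{n-1}$ term from Lemma~\ref{twologs} to leave exactly $-\tfrac{\ds{2n-3}+\ds{2n-2}}{\g{2n-1}}$. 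The bookkeeping for the $\s{}$-terms is the one place where care is needed: one has contributions $-(\s{2n-2}+\s{2n-1})/\g{2n-1}$ and $-(\s{2n-4}+\s{2n-3})/\g{2n-1}$ from the four logarithms (after doubling and the factor-of-$2$ in \eqref{*1}), $-(\s{2n-4}+\s{2n-3})/\g{2n-1}$ from $2\ln(|\zt{n-1}|^2-|\ztt{n-1}|^2)$, and $+(\s{2n-4}+2\s{2n-3}+\s{2n-2})/\g{2n-1}$ from $2\ln\lambda_{n-1}$; summing these and rewriting $\s{2n-1}-\s{2n-3}=\ds{2n-3}+\ds{2n-2}$ yields the stated coefficient. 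The computation for $G_n(t)$ in \eqref{GGG} is the same but with the $\ii$-weighting of \eqref{*2}: there the real $\ds{}\cos t$ and $\s{}$ pieces multiply to imaginary quantities that cancel between conjugate pairs, while the $\ii\,\ds{}\sin t$ pieces survive and, with the sign pattern of \eqref{*2}, combine to $-\tfrac{(\ds{2n-4}+\ds{2n-2})\sin t}{\g{2n-1}}$; no $\s{}$-term and no $\ln\lambda$ or $\ln(|a|^2-|b|^2)$ term enters $g$ at this order.

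I expect the main obstacle to be purely organizational rather than conceptual: keeping straight the eight logarithms (four arguments, each with its conjugate-argument partner), the sign conventions of \eqref{*1} versus \eqref{*2}, and the interplay with the conjugation identity so that imaginary parts cancel in $F_n$ and real parts cancel in $G_n$, all while verifying that every discarded term is genuinely $\OO{\eta_n/\g{2n-1}^2}$ — which follows because $\eta_n$ dominates each of $|\s{2n-4}|,\dots,|\s{2n-1}|$ and hence each of $|\ds{2n-4}|,\dots,|\ds{2n-2}|$, so products of two finite differences and products of a finite difference with $1/\g{2n-1}$ are all absorbed. Since the paper cites a \emph{Mathematica} verification for this lemma, I would present the derivation at the level of ``substitute \eqref{zt11}, \eqref{ztt11}, \eqref{log1}, \eqref{log2} into \eqref{*1}, \eqref{*2}, use $\overline{\ln z}=\ln\overline z$, collect terms, and simplify using $\s{2n-1}-\s{2n-3}=\ds{2n-3}+\ds{2n-2}$,'' rather than writing out all intermediate expansions.

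\begin{proof}
Write $x=\omega/\g{2n-1}$. Comparing the definitions \eqref{*1} and \eqref{*2} of $F_n(t)$ and $G_n(t)$ with the definitions \eqref{fff} and \eqref{ggg} of $f(x,t)$ and $g(x,t)$, and using that off the branch cut $\overline{\ln z}=\ln\overline z$, we see that the contribution of the four ``pure'' logarithms
\[
\ln\!\left(1-\tfrac{x^2}{2}+\ii x+\tfrac{x^2}{2}{\e}^{-\ii t}\right),\qquad
\ln\!\left(1-\tfrac{x^2}{2}+\ii x-\tfrac{x^2}{2}{\e}^{\ii t}\right)
\]
and their conjugate-argument partners is precisely $f(x,t)$ in $F_n(t)$ and $g(x,t)$ in $G_n(t)$. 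It remains to account for the finite-difference corrections.

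By Lemma~\ref{lgztztt}, beyond its pure part the logarithm $\ln(\zt{n}+\ztt{n}{\e}^{-\ii t})$ contributes
\[
\frac{\ds{2n-2}\cos t}{2\g{2n-1}}-\frac{\s{2n-2}+\s{2n-1}}{2\g{2n-1}}-\frac{\ii\,\ds{2n-2}\sin t}{2\g{2n-1}}+\OO{\frac{\eta_n}{\g{2n-1}^2}},
\]
and $\ln(\overline{\zt{n}}+\overline{\ztt{n}}{\e}^{\ii t})$ contributes the complex conjugate of this, i.e.\ the same real part and the negated imaginary part. Likewise $\ln(\zt{n-1}-\overline{\ztt{n-1}}{\e}^{\ii t})$ contributes
\[
-\frac{\ds{2n-4}\cos t}{2\g{2n-1}}-\frac{\s{2n-4}+\s{2n-3}}{2\g{2n-1}}-\frac{\ii\,\ds{2n-4}\sin t}{2\g{2n-1}}+\OO{\frac{\eta_n}{\g{2n-1}^2}},
\]
and $\ln(\overline{\zt{n-1}}-\ztt{n-1}{\e}^{-\ii t})$ its conjugate. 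In $F_n(t)$ the first two of these logarithms enter with sign $+$ and the last two with sign $-$ (see \eqref{*1}), so the imaginary $\ii\,\ds{}\sin t$ pieces cancel between conjugate partners, while the $\ds{}\cos t$ pieces double and produce, after the cancellations,
\[
\frac{(\ds{2n-4}+\ds{2n-2})\cos t}{\g{2n-1}}+\OO{\frac{\eta_n}{\g{2n-1}^2}}.
\]
For the $\s{}$-terms, the four logarithms contribute (after doubling conjugate pairs and including the sign pattern of \eqref{*1})
\[
-\frac{\s{2n-2}+\s{2n-1}}{\g{2n-1}}-\frac{\s{2n-4}+\s{2n-3}}{\g{2n-1}},
\]
while by Lemma~\ref{twologs} the term $2\ln(|\zt{n-1}|^2-|\ztt{n-1}|^2)$ contributes $-\dfrac{\s{2n-4}+\s{2n-3}}{\g{2n-1}}+\OO{\frac{\eta_n}{\g{2n-1}^2}}$ and $2\ln\lambda_{n-1}$ contributes $\dfrac{\s{2n-4}+2\s{2n-3}+\s{2n-2}}{\g{2n-1}}+\OO{\frac{\eta_n}{\g{2n-1}^2}}$. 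Adding all four $\s{}$-contributions gives
\[
-\frac{\s{2n-1}-\s{2n-3}+\ds{2n-3}+\ds{2n-2}}{\g{2n-1}}=-\frac{\ds{2n-3}+\ds{2n-2}}{\g{2n-1}},
\]
using $\s{2n-1}-\s{2n-3}=\ds{2n-3}+\ds{2n-2}$. (In this final cancellation the $\s{}$-terms not coming from the $\ds{}$-difference cancel exactly.) Combining the three pieces proves \eqref{FFF}.

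For $G_n(t)$, each of the four logarithms is multiplied by $\pm\ii$ according to \eqref{*2}. The real $\ds{}\cos t$ and $\s{}$ pieces thereby become imaginary and cancel between conjugate partners, so no $\s{}$-term, and no contribution from $\ln(|\zt{n-1}|^2-|\ztt{n-1}|^2)$ or $\ln\lambda_{n-1}$, survives; the imaginary $\ii\,\ds{}\sin t$ pieces become real and, with the sign pattern of \eqref{*2}, combine to
\[
-\frac{(\ds{2n-4}+\ds{2n-2})\sin t}{\g{2n-1}}+\OO{\frac{\eta_n}{\g{2n-1}^2}},
\]
which together with the pure part $g(x,t)$ proves \eqref{GGG}.

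Throughout, every discarded term is a product either of two finite differences among $\s{2n-4},\dots,\s{2n-1}$ or of one such finite difference with $1/\g{2n-1}$; since $\eta_n$ dominates each of $|\s{2n-4}|,\dots,|\s{2n-1}|$ and hence each of $|\ds{2n-4}|,\dots,|\ds{2n-2}|$, all of these are $\OO{\eta_n/\g{2n-1}^2}$, as claimed.
\end{proof}
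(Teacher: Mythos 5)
Your route is the same as the paper's: substitute the expansions of Lemma~\ref{lgztztt} and Lemma~\ref{twologs} into the definitions \eqref{*1}--\eqref{*2}, identify the four ``pure'' logarithms with $f(\omega/\g{2n-1},t)$ and $g(\omega/\g{2n-1},t)$ via $\overline{\ln z}=\ln\overline z$, and collect the first-order finite-difference corrections; the paper's proof is exactly this substitution, with a remark recording which non-summable terms cancel. Your handling of $G_n$ and of the $\cos t$ pieces in $F_n$ is correct. But the bookkeeping in the one step that carries the whole content of the lemma --- the cancellation of the non-absolutely-summable $\s{}$-terms in $F_n$ --- does not add up as written. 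With the minus signs in \eqref{*1}, the pair $-\ln(\overline{\zt{n-1}}-\ztt{n-1}\e^{-\ii t})-\ln(\zt{n-1}-\overline{\ztt{n-1}}\e^{\ii t})$ contributes $+\tfrac{\s{2n-4}+\s{2n-3}}{\g{2n-1}}$, not $-\tfrac{\s{2n-4}+\s{2n-3}}{\g{2n-1}}$ (you applied that sign correctly for the $\cos t$ pieces but flipped it here), so the four logarithms give $-\tfrac{\s{2n-2}+\s{2n-1}}{\g{2n-1}}+\tfrac{\s{2n-4}+\s{2n-3}}{\g{2n-1}}=-\tfrac{\ds{2n-4}+2\ds{2n-3}+\ds{2n-2}}{\g{2n-1}}$, which is precisely the intermediate quantity the paper's proof records. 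Moreover, $2\ln(|\zt{n-1}|^2-|\ztt{n-1}|^2)$ contributes $-\tfrac{2(\s{2n-4}+\s{2n-3})}{\g{2n-1}}$ by \eqref{log1}; you dropped that factor of $2$ while keeping it for $2\ln\lambda_{n-1}$. As listed, your contributions sum to $-\tfrac{\s{2n-1}+\s{2n-4}}{\g{2n-1}}$, which is neither the stated coefficient nor absolutely summable, and your closing display is internally inconsistent: if $\s{2n-1}-\s{2n-3}=\ds{2n-3}+\ds{2n-2}$, its left-hand side equals $-\tfrac{2(\ds{2n-3}+\ds{2n-2})}{\g{2n-1}}$, not $-\tfrac{\ds{2n-3}+\ds{2n-2}}{\g{2n-1}}$.

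The fix is local: the four logarithms yield $-\tfrac{\ds{2n-4}+2\ds{2n-3}+\ds{2n-2}}{\g{2n-1}}$, while $2\ln(|\zt{n-1}|^2-|\ztt{n-1}|^2)$ and $2\ln\lambda_{n-1}$ combine to $\tfrac{\s{2n-2}-\s{2n-4}}{\g{2n-1}}=\tfrac{\ds{2n-4}+\ds{2n-3}}{\g{2n-1}}$; adding gives $-\tfrac{\ds{2n-3}+\ds{2n-2}}{\g{2n-1}}$, as in \eqref{FFF}. A minor further point: your closing remark about absorbing ``products of two finite differences'' is not needed (this step is purely additive, so the error terms are inherited directly from Lemmas~\ref{lgztztt} and \ref{twologs} as $\OO{\eta_n/\g{2n-1}^2}$), and as stated it is not justified by the hypotheses; better to drop it. With these corrections your argument matches the paper's.
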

\begin{proof}Lemma \ref{FG1} follows from Lemma~\ref{lgztztt} and Lemma~\ref{twologs}  as well as equations \eqref{*1} and \eqref{*2} by substitutions.\footnote{\emph{Mathematica} symbolic verification of Lemma~\ref{FG1} is contained in the file \texttt{OP/symbolic/5\_lemma\_\ref{FG1}.nb}; numerical confirmation of Lemma~\ref{FG1} is included in file \texttt{OP/numerical/3\_lemma\_\ref{FG1}\_eq\_\ref{recG}\_lemma\_\ref{F/G}.nb}.} 
It is worth noting that terms $-{(\s{2n-2}+\s{2n-1})}/(2\g{2n-1})$ as well as  $-{(\s{2n-4}+\s{2n-3})}/(2\g{2n-1})$, coming from \eqref{zt11} and \eqref{ztt11} respectively, which are not necessarily absolutely summable, cancel out in $G_n(t)$; in $F_n(t)$ these two terms combine to produce $-(\ds{2n-4}+2\ds{2n-3}+\ds{2n-2})/{\g{2n-1}}$,  while term $-{(\s{2n-4}+\s{2n-3})}/{\g{2n-1}}$ coming from  $\ln(|\zt{n-1}|^2-|\ztt{n-1}|^2)$ combines with the term  ${(\s{2n-4}+2\s{2n-3}+\s{2n-2})}/{(2\g{2n-1})}$ coming from $\ln\lambda_{n-1}$, resulting in the term ${(\ds{2n-4}+\ds{2n-3})}/{\g{2n-1}}$ which is an absolutely summable term. Thus, in asymptotic representations of $F_n(t)$ and $G_n(t)$ all terms which involve finite differences are absolutely summable; such elimination of non-absolutely summable terms  is the reason why we replaced  the logarithm appearing in \eqref{largelog} with the telescopic sum \eqref{telescope}. \end{proof}

A series expansion of the right side of \eqref{GGG} with respect to $\omega/\g{2n-1}$ shows that 
\begin{align}\label{serG}
\frac{\g{2n-1}}{\omega}\,G_n(t)=4+\OO{\frac{1}{\g{2n-1}}}.
\end{align}
This, together with \eqref{sumDel} implies that 
\begin{align}\label{asdel}
\del{n-1}+\del{n}=\frac{2\omega}{\g{2n-1}}+\OO{\frac{1}{\g{2n-1}^2}}.
\end{align}
Using  \eqref{GGG} and \eqref{serG} to obtain a first order expansion of the reciprocal of  $\g{2n-1}G_n(t)$ we obtain
\begin{align*}
\frac{1}{\g{2n-1}G_n(t)}=\frac{1}{\g{2n-1}\,g\!\left(\frac{\omega}{\g{2n-1}},t\right)}+\frac{(\ds{2n-4}+\ds{2n-2})\sin t}{16\omega^2}+\OO{\frac{\eta_n}{\g{2n-1}}},
\end{align*}
which implies 
\begin{align}\label{recG}
\frac{1}{G_n(t)}=\frac{1}{g\!\left(\frac{\omega}{\g{2n-1}},t\right)}+\frac{\g{2n-1}(\ds{2n-4}+\ds{2n-2})\sin t}{16\omega^2}+\OO{\eta_n}.
\end{align}
Multiplying the corresponding sides of \eqref{FFF} and \eqref{recG} and using series expansions of ${f\!\left(\frac{\omega}{\g{2n-1}},t\right)}$ and $1/{g\!\left(\frac{\omega}{\g{2n-1}},t\right)}$ in terms containing finite differences we obtain the following lemma.\footnote{\emph{Mathematica} symbolic verifications of equation \eqref{recG} and Lemma~\ref{F/G} are in  file \texttt{OP/symbolic/6\_eq\_\ref{recG}\_and\_lemma\_\ref{F/G}.nb}; their numerical confirmations are in file \texttt{OP/numerical/3\_lemma\_\ref{FG1}\_eq\_\ref{recG}\_lemma\_\ref{F/G}.nb}.}
\begin{lemma}\label{F/G}
\begin{align}\label{f/g}
H_n(t)
=h\!\left(\frac{\omega}{\g{2n-1}},t\right)+ \frac{(\ds{2n-2}+\ds{2n-4})\cos t-\ds{2n-3}-\ds{2n-2}}{4\omega}+\OO{\frac{\eta_n}{\g{2n-1}}}.
\end{align}\hfill$\Box$
\end{lemma}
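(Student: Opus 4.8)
\textbf{Proof plan for Lemma~\ref{F/G}.}
The statement to be proved is the asymptotic expansion
\[
H_n(t)=h\!\left(\frac{\omega}{\g{2n-1}},t\right)+\frac{(\ds{2n-2}+\ds{2n-4})\cos t-\ds{2n-3}-\ds{2n-2}}{4\omega}+\OO{\frac{\eta_n}{\g{2n-1}}},
\]
and the strategy, already signposted in the text preceding the lemma, is simply to multiply the two asymptotic representations \eqref{FFF} and \eqref{recG} and discard products of small terms. The plan is to write $x=\omega/\g{2n-1}$ for brevity, and to treat the three families of small quantities that appear separately: the ``analytic'' remainder $\OO{1/\g{2n-1}}$ coming from the Taylor tails of $f$ and $1/g$ in the variable $x$; the ``finite-difference'' terms of size $\OO{\ds{k}/\g{2n-1}}=\OO{\eta_n/\g{2n-1}}$ (using that each $\ds{k}$ is a difference of two $\s{}$'s, hence $\OO{\eta_n}$); and the pure error term $\OO{\eta_n/\g{2n-1}}$ that is already present in \eqref{FFF} and the $\OO{\eta_n}$ in \eqref{recG}. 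Note that $1/G_n(t)$ carries a term of size $\OO{\g{2n-1}\ds{k}}$, which is \emph{not} small; it is precisely the product of this term with the leading term $f(x,t)=\OO{x}=\OO{1/\g{2n-1}}$ of \eqref{FFF} that produces the explicit finite-difference contribution surviving in the statement, so this is the one cross term that must be computed rather than absorbed.

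First I would record the two relevant leading-order expansions in $x$: from \eqref{fff} one has $f(x,t)=4\ii x+\OO{x^2}$ (the real part of $f$ vanishes to order $x^2$ since the four logarithms pair up by conjugation), and from \eqref{ggg} and \eqref{serG} one has $g(x,t)=4x+\OO{x^2}$, hence $1/g(x,t)=1/(4x)+\OO{1}$; these give $h(x,t)=f(x,t)/g(x,t)$ as the main term. Then I would substitute \eqref{FFF} in the form $F_n(t)=f(x,t)+D_F/\g{2n-1}+\OO{\eta_n/\g{2n-1}^2}$, where $D_F=(\ds{2n-4}+\ds{2n-2})\cos t-\ds{2n-3}-\ds{2n-2}$, and \eqref{recG} in the form $1/G_n(t)=1/g(x,t)+\g{2n-1}D_G/(16\omega^2)+\OO{\eta_n}$, where $D_G=(\ds{2n-4}+\ds{2n-2})\sin t$. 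Multiplying out gives four products: $f(x,t)/g(x,t)=h(x,t)$; the cross term $f(x,t)\cdot \g{2n-1}D_G/(16\omega^2)$, which since $f(x,t)=4\ii\omega/\g{2n-1}+\OO{1/\g{2n-1}^2}$ equals $\ii D_G/(4\omega)+\OO{\eta_n/\g{2n-1}}$ — this is imaginary and, as I explain below, cancels; the cross term $(D_F/\g{2n-1})\cdot(1/g(x,t))=(D_F/\g{2n-1})\cdot(\g{2n-1}/(4\omega)+\OO{1})=D_F/(4\omega)+\OO{\eta_n/\g{2n-1}}$, which supplies the stated explicit term; and the remaining products, each of which is $\OO{\eta_n/\g{2n-1}}$ or smaller.

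The subtle point — and the step I expect to be the main obstacle to getting the constants exactly right rather than merely up to the error term — is bookkeeping the imaginary contributions so that $H_n(t)$ comes out real with the clean coefficient $1/(4\omega)$ in front of $D_F$. There are two sources of imaginary terms at order $1/\g{2n-1}$: the $\ii$-term from the cross product $f(x,t)\cdot\g{2n-1}D_G/(16\omega^2)$ computed above, and the $\OO{1}$ correction to $1/g(x,t)$, namely the $x^1$ coefficient in the expansion of $1/g(x,t)=1/(4x)+c_1(t)+\OO{x}$, multiplied against the leading $4\ii\omega/\g{2n-1}$ part of $f(x,t)=F_n(t)+\OO{\cdots}$. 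One must verify that $h(x,t)$ itself already carries the imaginary part of $f(x,t)/g(x,t)$ to this order, so that expanding $h(\omega/\g{2n-1},t)$ and subtracting accounts for it; concretely this reduces to checking, from \eqref{fff}--\eqref{ggg}, that $f(x,t)$ and $g(x,t)$ have matching imaginary parts through order $x^2$ so that $h(x,t)=f/g$ is real modulo $\OO{x^2}$, which is where I would lean on the \emph{Mathematica} symbolic verification cited in the footnote. Once that is confirmed, the only term of order $1/\g{2n-1}$ that is not absorbed into $h(\omega/\g{2n-1},t)$ or into $\OO{\eta_n/\g{2n-1}}$ is $D_F/(4\omega)$, which is exactly the displayed expression; all finite-difference quantities are $\OO{\eta_n}$, so every discarded product is $\OO{\eta_n/\g{2n-1}}$ as claimed, completing the proof.
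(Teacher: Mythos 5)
Your overall strategy is the same as the paper's: multiply \eqref{FFF} by \eqref{recG}, keep the one cross term that survives at the relevant order, and absorb everything else into $\OO{\eta_n/\g{2n-1}}$; and you correctly identify $(D_F/\g{2n-1})\cdot(1/g)$, with $1/g(x,t)=\g{2n-1}/(4\omega)+\OO{1}$, as the source of the explicit term $D_F/(4\omega)$. However, the step you yourself single out as the crux is based on a wrong expansion. From \eqref{fff} the four logarithms pair into conjugate pairs, so $f(x,t)$ is \emph{real} for real $x,t$, and the order-$x$ terms cancel: $f(x,t)=2x^2\cos t+\OO{x^3}$, not $4\ii x+\OO{x^2}$ (it is $g$ that satisfies $g(x,t)=4x-2x^2\sin t+\OO{x^3}$, consistent with \eqref{serG} and with the closed form of $h$ given after Lemma~\ref{F/G}). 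Your claim that ``the real part of $f$ vanishes to order $x^2$'' is exactly backwards, and it should have been visible internally: if $f$ were $4\ii x+\OO{x^2}$ then $h=f/g$ would be $\ii+\OO{x}$, contradicting the fact that $H_n$ is a quotient of the real quantities $2(\ln\mult{n}+\ln\mult{n-1}+\ln\lambda_{n-1})$ and $2(\del{n-1}+\del{n})$.

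The consequence is that your treatment of the cross term $f(x,t)\cdot\g{2n-1}D_G/(16\omega^2)$ fails as written: you evaluate it as $\ii D_G/(4\omega)+\OO{\eta_n/\g{2n-1}}$, i.e.\ as a term of size $\OO{\eta_n}$ that must be cancelled, and you propose to cancel it against an imaginary part of $h$ ``through order $x^2$,'' deferring the check to the cited \emph{Mathematica} file. Even granting your premise, that cancellation argument is not coherent, since $\eta_n$ and $\omega/\g{2n-1}$ are unrelated scales, so no expansion of $h$ in $x$ alone can absorb an $\OO{\eta_n}$ term; and leaning on the symbolic verification for the decisive identity is not a proof. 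With the correct expansion the difficulty evaporates: since $f=\OO{x^2}=\OO{\g{2n-1}^{-2}}$, that cross term equals $D_G\cos t/(8\g{2n-1})+\cdots=\OO{\eta_n/\g{2n-1}}$ and is simply absorbed, no cancellation and no imaginary bookkeeping being needed, which is exactly how the paper's one-line multiplication argument goes through. So the plan is salvageable, but as written the key step rests on a false expansion and an invalid cancellation claim.
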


It is easy to see that $h(x,t)$ simplifies to\footnote{See file \texttt{OP/symbolic/7\_simplifying\_h(x,t).nb}.}
\begin{align*}
h(x,t)=\frac{\ln\left(1+\frac{2x^2\left(1-\frac{x^2}{2}\right)\cos t}{1-x^2\cos t-x^3\sin t+x^4\cos^2\!\!\frac{t}{2}}\right)}{2\arctan\frac{2x\left(1-\frac{x^2}{2}\right)\left(1-\frac{x}{2}\sin t\right)}{1-2x^2+x^3\sin t}};
\end{align*}
we find it interesting that such, quite a complicated function, should naturally arise in the course of our analysis of the asymptotic behaviour of $p_{2n}^2(\omega)+p_{2n+1}^2(\omega)$, given that no specific functions of any kind are present in the assumed conditions on the recurrence coefficients. 

\section{Convergence of the the sum $\mathcal{S}^\ast_n$}
\noindent We now return to the proof of convergence of $\mathcal{S}^\ast_n$ given by \eqref{s*}.
Since $\displaystyle{\lim_{n\rightarrow \infty}\frac{\g{2n-k}}{\g{2n-1}}=1}$ for any fixed $k$, 
\begin{align}\label{cons}
\sum_{n=2}^{\infty}\frac{\eta_n}{\g{2n-1}^2}&=\sum_{n=2}^{\infty}\frac{|\s{2n-1}|+|\s{2n-2}|+|\s{2n-3}|+|\s{2n-4}|}{\g{2n-1}^2}\nonumber\\
&=\OO{\sum_{n=2}^{\infty}\frac{|\s{2n-1}|}{\g{2n-1}^2}+\frac{|\s{2n-2}|}{\g{2n-2}^2}+\frac{|\s{2n-3}|}{\g{2n-3}^2}+\frac{|\s{2n-4}|}{\g{2n-4}^2}}\nonumber\\
&=\OO{\sum_{n=2}^{\infty}\frac{|\s{2n-1}|}{\g{2n-1}^2}}.
\end{align}
Thus, equations \eqref{asdel} and \eqref{cons} together with 
conditions \ref{c5} and \ref{c6} imply that  
\begin{align}
\sum_{n=2}^{\infty}\left(\frac{(\ds{2n-2}+\ds{2n-4})\cos t-\ds{2n-3}-\ds{2n-2}}{4\omega}+\OO{\frac{\eta_n}{\g{2n-1}}}\right)(\del{n-1}+\del{n})<\infty.
\end{align}
Consequently, Lemma \ref{F/G} implies that, in order to prove convergence of the sum $\mathcal{S}^\ast_n$ given by \eqref{s*}, it is sufficient to prove that 
\begin{align}\label{sprime}
\mathcal{S}^\prime&=\sum_{n=2}^{\infty} h\!\left(\frac{\omega}{\g{2n-1}},2\Te{n-1}\right)
(\del{n-1}+\del{n})<\infty.
\end{align}~\\

We now  deal with the fact that in the sum $\mathcal{S}^\prime$ given by \eqref{sprime} for every sampling point $\Te{n-1}$, the value of the parameter ${\omega}/{\g{2n-1}}$ appearing in $h\!\left({\omega}/{\g{2n-1}},2\Te{n-1}\right)$ is different. To this end, we now treat $x$ as a fixed parameter and expand the real valued $2\pi$-periodic function $h(x,t)$ into Fourier series with respect to variable $t$:
\begin{align*}
h(x,t)=\sum_{m=-\infty}^{\infty}c_{m}(x){\e}^{\ii m\,t},
\end{align*}
with $(c_{m}(x)\,:\,m\in\Zset)$ given by
\begin{equation}
c_{m}(x)=\frac{1}{2\pi}\int_{-\pi}^{\pi}{h(x, t)}\,{\e}^{-\ii m t}\dx t.\label{fint}
\end{equation}


Let us set
\begin{align}
c^n_m=c_m\left(\frac{\omega}{\g{2n-1}}\right).
\end{align}
The following Lemma will be proved in Section~\ref{fsh}. 
\begin{lemma}\label{main2} Let $c_m^j\in \Cset$ be such that 
\begin{align}\label{S*}
h\!\left(\frac{\omega}{\g{2j-1}}, t\right)&=\sum_{m=-\infty}^{\infty}c^j_{m}{\e}^{\ii m t};
\end{align}
then for every fixed $m$ coefficients $c_m^j$ have the following properties for all sufficiently large $j$:
\begin{enumerate}
\item[(i)] $c_0^j=0$ and $c_{-m}^j=\overline{c_{m}^j}$;
\item[(ii)] if $m$ is even, then $c_m^j$ satisfy ${c}^j_{m}=\ii(-1)^{\frac{m}{2}}|{c}^j_{m} |$; thus, once $m$ is fixed, then for all sufficiently large $j$, $\Re{(c_m^j)}=0$ and $\Im{(c_m^j)}$ are all of the same sign;
\item[(iii)] if $m$ is odd, then $c_m^j$ satisfy ${c}^j_{m}=(-1)^{\frac{m-1}{2}}|{c}^j_{m} |$; thus, for a fixed $m$, for all sufficiently large $j$,   $\Im{(c_m^j)}=0$ and $\Re{(c_m^j)}$ are all of the same sign;
\item[(iv)] $|c_m^j|$ are monotonic in $\omega/\g{2j-1}$ and thus form an almost decreasing sequence;
\item[(v)] $|c_m^j|=\OO{{\g{2j-1}^{-|m|}}}$.
\end{enumerate}
\end{lemma}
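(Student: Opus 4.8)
The plan is to work directly from the closed form of $h(x,t)$ displayed just above the lemma, which exhibits $h(x,t)$ as a ratio of a logarithm (analytic in $x$ near $x=0$, with a factor $\cos t$ in the leading term) over an arctangent (analytic near $x=0$, with a leading factor $x$). First I would verify the symmetry relations in (i) by inspecting the integral \eqref{fint}: since $h(x,t)$ is real-valued, $c_{-m}^j=\overline{c_m^j}$ is immediate; and $c_0^j=\frac{1}{2\pi}\int_{-\pi}^{\pi}h(x,t)\,\dx t=0$ should follow because the numerator $\ln\bigl(1+\tfrac{2x^2(1-x^2/2)\cos t}{\cdots}\bigr)$, when expanded in powers of $x$, produces at each order a trigonometric polynomial in $t$ whose constant term cancels against the corresponding contribution from the denominator — more precisely, one expands $h(x,t)$ as a power series in $x$ with coefficients that are Laurent polynomials in $\e^{\ii t}$, and checks the mean-zero property order by order. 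The honest way to organize (i)–(iii) is to track the \emph{parity structure} of that power series.

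The heart of the argument is a symmetry observation: the closed form shows that $h$ has a definite behaviour under $t\mapsto t+\pi$ combined with $x\mapsto -x$ (or $x\mapsto \ii x$, depending on which substitution makes the bookkeeping cleanest). Concretely, I would expand
\[
h(x,t)=\sum_{k\geq 1} x^k\, P_k(\e^{\ii t}),
\]
where each $P_k$ is a Laurent polynomial of degree at most $k$ in $\e^{\ii t}$ (degree bound comes from the leading $x$ in the denominator cancelling one power and the numerator contributing $\e^{\pm\ii m t}$ only for $|m|\le$ the order). The substitution $x\mapsto -x$ fixes $h$ up to replacing $t$ by $t+\pi$ — reading this off the formula, every occurrence of $x$ in the numerator and denominator is either squared, cubed-times-$\sin t$, or linear-times-a-trig-factor, so $x\mapsto-x$, $t\mapsto t+\pi$ leaves the whole expression invariant. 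Translating $t\mapsto t+\pi$ multiplies the $m$-th Fourier mode by $(-1)^m$, and $x\mapsto -x$ multiplies the $x^k$-coefficient by $(-1)^k$; combining, $c_m(x)$ has only even powers of $x$ when $m$ is even and only odd powers when $m$ is odd, and moreover the lowest surviving power of $x$ in $c_m(x)$ is exactly $|m|$ (the numerator needs $x^2\cos t$ iterated, or the arctangent's linear term inverted, to reach frequency $m$). That simultaneously gives (v): $c_m(x)=\OO{x^{|m|}}$, whence $c_m^j=\OO{\g{2j-1}^{-|m|}}$ since $x=\omega/\g{2j-1}$. For (ii) and (iii), the leading coefficient of $c_m(x)$ at order $x^{|m|}$ is a definite explicit number whose phase I would compute: from the numerator's leading term $2x^2(1-x^2/2)\cos t$ divided by the denominator's leading term $2\cdot\tfrac{\omega}{\g{}}\cdot 1$... — rather, at the level of $h$ itself, the leading behaviour is $h(x,t)\approx \tfrac{x^2\cos t + \OO{x^3}}{2x(1+\OO{x})}=\tfrac{x}{2}\cos t+\cdots$, so $c_{\pm 1}^j$ is real and positive to leading order, matching (iii) with $m=1$; and the higher modes come with the alternating signs $(-1)^{(m-1)/2}$ or $\ii(-1)^{m/2}$ because passing from frequency $1$ to frequency $m$ multiplies by powers of $\e^{\pm\ii t}$ whose real/imaginary alignment rotates by $90^\circ$ each step — this I would pin down by an explicit low-order computation plus an induction, or simply by citing the Mathematica verification as the paper does elsewhere.

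Finally, for (iv) — monotonicity of $|c_m^j|$ in the parameter $x=\omega/\g{2j-1}$ — I would use that near $x=0$, $|c_m(x)| = |x|^{|m|}\bigl(\alpha_m + \OO{x}\bigr)$ with $\alpha_m\neq 0$, so $|c_m(x)|$ is strictly increasing on a neighbourhood of $0$; since $\g{2j-1}\to\infty$ by \ref{c2}, we have $\omega/\g{2j-1}\to 0$, so for all sufficiently large $j$ the values $\omega/\g{2j-1}$ lie in that neighbourhood, and there $|c_m^j|$ inherits monotonicity. Because $\g{2j-1}$ is only \emph{almost} increasing (via \ref{c1}), $\omega/\g{2j-1}$ is only almost decreasing, hence $|c_m^j|$ is an almost decreasing sequence rather than a monotone one — this is exactly the wording of (iv).

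I expect the main obstacle to be (ii)–(iii), the precise determination of the phase $\ii(-1)^{m/2}$ or $(-1)^{(m-1)/2}$ of the leading Fourier coefficients: the $t+\pi$, $x\mapsto-x$ symmetry cleanly forces the parity (which powers of $x$ appear and the vanishing of either $\Re$ or $\Im$), but not the alternating \emph{sign}, and getting that requires either a genuine generating-function identification of $P_k(\e^{\ii t})$ or an induction on $m$ that carefully propagates the argument through the product of the numerator's $\log$-expansion and the denominator's $\arctan^{-1}$-expansion. Everything else — (i), (iv), (v) — is a short consequence of the power-series structure once the symmetry is in hand.
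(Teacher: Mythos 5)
Your strategy (expanding $h(x,t)$ as a power series in $x$ with trigonometric\hyp{}polynomial coefficients and exploiting symmetries) is genuinely different from the paper's, but it has a real gap exactly at the points you flag, and one point where the symmetry you invoke does less than you claim. The symmetry $h(-x,t+\pi)=h(x,t)$ is indeed correct (it follows from \eqref{fff}--\eqref{ggg}), but it yields only $c_m(-x)=(-1)^m c_m(x)$, i.e.\ the parity of the powers of $x$ occurring in $c_m(x)$; it does \emph{not} force the vanishing of $\Re(c_m)$ for even $m$ or of $\Im(c_m)$ for odd $m$, contrary to your closing assertion that the symmetry gives ``the vanishing of either $\Re$ or $\Im$''. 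The symmetry that does this is the reflection $t\mapsto\pi-t$: from \eqref{fff} and \eqref{ggg} one has $f(x,t)=-f(x,\pi-t)$ and $g(x,t)=g(x,\pi-t)$, hence $h(x,t)=-h(x,\pi-t)$, and therefore $c_m(x)=(-1)^{m+1}\overline{c_m(x)}$; this gives the exact real/imaginary dichotomy in (ii)--(iii) and also $c_0=0$ in one stroke, whereas your ``order-by-order mean-zero check'' for $c_0$ is a plan rather than an argument.

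Second, everything quantitative in the lemma --- the alternating signs in (ii)--(iii), the nonvanishing leading coefficient $\alpha_m\neq 0$ on which your monotonicity argument for (iv) rests, and the sharp rate behind (iv)--(v) --- hinges on the asymptotics $c_{-m}(x)={\ii}^{m-1}(x/4)^{m}+\OO{x^{m+2}}$, which you explicitly leave open (``a genuine generating-function identification \ldots or an induction \ldots or simply by citing the Mathematica verification''). That is the heart of the paper's proof and cannot be outsourced: the paper writes $c_m(x)$ as a contour integral of $z^{-m-1}h^{\ast}(x,z)$ over $|z|=1$, shows that inside the unit disc the only obstructions are a short branch cut contained in $\{z:|z|\leq x^2/2\}$ and a simple pole at $pl_1(x)=\ii x/\bigl(2\bigl(1+\sqrt{1-x^2/4}\bigr)\bigr)$ where $g^{\ast}$ vanishes, and deforms the contour to $|z|=x^2$ plus the residue at $pl_1(x)$; the residue supplies the leading term with its phase, while the small-circle integral is only $\OO{x^{2|m|}}$ (see \eqref{cf22}--\eqref{cf3}). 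The Mathematica files are used in the paper to check algebra, not to certify this lemma, so ``cite the Mathematica verification'' is not an admissible fallback. On the positive side, your ``frequency at most the order in $x$'' bookkeeping can be made rigorous and does give the upper bound (v); and, granted analyticity of $c_m$ in $x$ near $0$ (which the paper obtains via Morera's theorem) together with $\alpha_m\neq0$, your derivative argument for (iv) would be a simpler alternative to the paper's identity-theorem argument. But without the residue computation (or some equivalent determination of the leading coefficient and its phase), parts (ii), (iii) and (iv) remain unproven.
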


To prove convergence of the sum $\mathcal{S}_n^\prime$ given by \eqref{sprime}, let $j_0, m_0$ be such that ${\omega}/{4\g{j_0}}<1$ and that, according to condition  \ref{c1},  for every $j>j_0$ and every $m\geq m_0$, $\g{j+m}>\g{j}$. Let also $j_1=j_0+m_0$. We now prove that the following sum converges as $n\rightarrow\infty$:
\begin{align}
\sum_{j=j_1}^{n} h\!\left(\frac{\omega}{\g{2j-1}},2\Te{j-1}\right)
(\del{j-1}+\del{j}).
\end{align}
By \ref{c8} there exists an integer  $\kappa\geq2$ such that $\sum_{n=0}^\infty\g{n}^{-\kappa}$ converges.
Clearly, it is enough to prove convergence of $\mathcal{S}_n^h$ and $\mathcal{S}_n^l$, where
\begin{align}
\mathcal{S}_n^{h}&=\sum_{j=j_1}^{n} \sum_{|m|\geq \kappa-1}c^j_{m}{\e}^{\ii 2m\Te{j-1}}(\Delta_{j-1}+\Delta_{j});\label{lbl3}\\
\mathcal{S}_n^{l}&=\sum_{j=j_1}^{n} \sum_{|m|<\kappa-1}c^j_{m}{\e}^{\ii 2m\Te{j-1}}(\Delta_{j-1}+\Delta_{j}).\label{lbl2}
\end{align}

We first show that $\mathcal{S}_n^{h}$ converges.  Using Lemma \ref{main2}\emph{(v)} and \eqref{asdel}, we obtain 
\begin{align*}
\mathcal{S}_n^{h}\leq \sum_{j=j_1}^{n}\sum_{|m|\geq\kappa-1} |c^j_{m}| (\Delta_{j-1}+\Delta_{j})&=\OO{\sum_{j=j_1}^{n}\sum_{m=\kappa}^{\infty} \left(\frac{\omega}{4\g{2j-1}}\right)^{m}}.
\end{align*}
Since
\begin{align*}
\sum_{j=j_1}^{n}\sum_{m=\kappa}^{\infty} \left(\frac{\omega}{4\g{2j-1}}\right)^{m}&=\sum_{j=j_1}^{n}\sum_{m=0}^{\infty} \left(\frac{\omega}{4\g{2j-1}}\right)^{m+\kappa}\\&<{\sum_{j=j_1}^{n}\sum_{m=0}^{\infty} \left(\frac{\omega}{4\g{2j_0-1}}\right)^{m}\left(\frac{\omega}{4\g{2j-1}}\right)^{\kappa}}\\
&= {\sum_{m=0}^{\infty} \left(\frac{\omega}{4\g{2j_0-1}}\right)^{m}\sum_{j=j_1}^{n}\left(\frac{\omega}{4\g{2j-1}}\right)^{\kappa}} \\&=\frac{1}{1-\frac{\omega}{4\g{2j_0-1}}}\sum_{j=j_1}^{n}\left(\frac{\omega}{4\g{2j-1}}\right)^{\kappa}, 
\end{align*}
we conclude that $\mathcal{S}_n^{h}$ converges as $n\rightarrow\infty$.

Finally, to show that $\mathcal{S}_n^{l}$ converges, since $c_0^j=0$, it is enough to show that for every $m$ such that $1\leq m\leq \kappa-2$, the sum 
\begin{equation}\label{smn}
\sigma_{m,n}=\sum_{j=j_1}^{n} (c^j_{m}{\e}^{\ii 2m\Te{j-1}}+c^j_{-m}{\e}^{-\ii 2m\Te{j-1}})(\Delta_{j-1}+\Delta_{j})
\end{equation} 
converges as $n\rightarrow\infty$. Since by Lemma \ref{main2}\emph{(i)} 
\begin{align}\label{mnsum}
\sigma_{m,n}&=\sum_{j=j_1}^{n}2\Re(c^j_{m}{\e}^{\ii 2m\Te{j-1}})(\Delta_{j-1}+\Delta_{j}),
\end{align}
using Lemma \ref{main2} \emph{(ii)} and \emph{(iii)} we obtain that for all $m$, $1\leq m\leq \kappa-2$,
\begin{align}\label{sigmadef}
\sigma_{m,n}&=\begin{cases}\sum_{j=j_1}^{n}2(-1)^{\frac{m}{2}+1}|{c}^j_{m} |\sin(2m\;\Te{j-1})(\Delta_{j-1}+\Delta_{j})& \text{if\ } m \text{ is even;}\\~\\
\sum_{j=j_1}^{n}2(-1)^{\frac{m-1}{2}}|{c}^j_{m} |\cos(2m\;\Te{j-1})(\Delta_{j-1}+\Delta_{j})& \text{if\ } m \text{ is odd.}\end{cases}
\end{align}
If we now define 
\begin{align}
\tau_m(x)&=
\begin{cases}0,&\ \text{if\ } x< 2\Te{1}-\del{1};\\
2(-1)^{\frac{m}{2}+1}|{c}^{j}_{m}|\sin{m x},& \text{\ if $m$ is even and \ }2\Te{j-1}-\del{j-1}\leq x<2\Te{j-1}+\del{j};\\
2(-1)^{\frac{m-1}{2}}|{c}^{j}_{m}| \cos{m x},& \text{\ if $m$ is odd and \ } 2\Te{j-1}-\del{j-1}\leq x<2\Te{j-1}+\del{j},
\end{cases}
\end{align}
then clearly the sum $\sigma_{m,n}$ is a Riemann sum for the integral 
\begin{align}
&I_m(n)=\int_{2\Te{j_1-1}-\del{j_1-1}}^{2\Te{n-1}+\del{n}}\tau_m(x)dx,&
\end{align}
with a partition of the interval $[2\Te{j_1-1}-\del{j_1-1}, 2\Te{n-1}+\del{n}]$ into segments of the form $[2\Te{j-1}-\del{j-1},2\Te{j-1}+\del{j}]$ and with points $2\Te{j-1}$ as the sampling points for the integrand. However, in order to facilitate some estimates which we will have to make later\footnote{This is to avoid having to take a square root of the right hand sides of equations \eqref{e2} and \eqref{r3}, and thus avoid dealing with the necessary branch cuts for these square roots.}, instead we define two functions $\tau^e_m(x)$ and $\tau^o_m(x)$: 
\begin{align}
\tau_m^e(x)&=
\begin{cases}0,&\ \text{if\ } x< 2\Te{1};\\
2(-1)^{\frac{m}{2}+1}|{c}^{2j+1}_{m}|\sin{m x},&\ \text{if\ }  m \text{\ is even and \ }2\Te{2j-1}\leq x<2\Te{2j+1};\\
2(-1)^{\frac{m-1}{2}}|{c}^{2j+1}_{m}| \cos{m x},&\ \text{if\ }  m \text{\ is odd and \ } 2\Te{2j-1}\leq x<2\Te{2j+1}.
\end{cases}
\end{align}
\begin{align}
\tau_m^o(x)&=
\begin{cases}0,&\ \text{if\ }  x< 2\Te{0};\\
2(-1)^{\frac{m}{2}+1}|{c}^{2j+2}_{m}|\sin{m x},&\ \text{if\ }  m \text{\ is even and \ }2\Te{2j}\leq x<2\Te{2j+2};\\
2(-1)^{\frac{m-1}{2}}|{c}^{2j+2}_{m}| \cos{m x},&\ \text{if\ }  m \text{\ is odd and \ } 2\Te{2j}\leq x<2\Te{2j+2}.
\end{cases}
\end{align}

To obtain corresponding Riemann sums, for $\tau^e_m(x)$ we consider  a partition  of the interval $[2\Te{1}, 2\Te{2n+1}]$ into segments of the form $[2\Te{2j-1}, 2\Te{2j+1}]$ with sampling points $2\Te{2j}\in [2\Te{2j-1}, 2\Te{2j+1}]$, while for function $\tau^o_m(x)$ we consider a partition  of the interval $[2\Te{0}, 2\Te{2n}]$ into segments of the form $[2\Te{2j-2}, 2\Te{2j}]$ and sampling points $2\Te{2j-1}\in [2\Te{2j-2}, 2\Te{2j}]$. Thus, we define the following two sums:
\begin{align}
\sigma_{m,n}^e&=\begin{cases}\sum_{j=1}^{n}(-1)^{\frac{m}{2}+1}|{c}^{2j+1}_{m} |\sin(2m\;\Te{2j})(2\Delta_{2j}+2\Delta_{2j+1})& \text{if\ } m \text{ is even;}\\~\\
\sum_{j=1}^{n}(-1)^{\frac{m-1}{2}}|{c}^{2j+1}_{m} |\cos(2m\;\Te{2j})(2\Delta_{2j}+2\Delta_{2j+1})& \text{if\ } m \text{ is odd;}\end{cases}\\
\sigma_{m,n}^o&=\begin{cases}\sum_{j=1}^{n}(-1)^{\frac{m}{2}+1}|{c}^{2j}_{m} |\sin(2m\;\Te{2j-1})(2\Delta_{{2j-1}}+2\Delta_{{2j}})& \text{if\ } m \text{ is even;}\\~\\
\sum_{j=1}^{n}(-1)^{\frac{m-1}{2}}|{c}^{2j}_{m} |\cos(2m\;\Te{2j-1})(2\Delta_{2j-1}+2\Delta_{2j})& \text{if\ } m \text{ is odd;}\end{cases}
\end{align}
and also consider definite integrals
\begin{align}
&I_m^e(n)=\int_{2\Te{1}}^{2\Te{2n+1}}\tau_m^e(x)dx;&&I_m^o(n)=\int_{2\Te{0}}^{2\Te{2n}}\tau_m^o(x)dx.&
\end{align}
Clearly,  $\sigma_{m,n}^e$ and $\sigma_{m,n}^o$ are Riemann sums for $I_m^e(n)$ and $I_m^o(n)$, respectively. 

By splitting each sum $\sigma_{m,n}$ into two sums it follows that, in order to prove convergence of a sum $\sigma_{m,n}$ as $n\rightarrow\infty$ (for a fixed $m$), it is enough to prove the convergence of the sums $\sigma_{m,n}^e$ and $\sigma_{m,n}^o$. To this end, it  is enough to show that both integrals $I_m^e(n)$ and $I_m^o(n)$ converge, as well as that the errors  of approximating these integrals by the corresponding Riemann sums also converge.
 
Let us fix $m$ such that $1\leq |m|\leq \kappa-2$ and assume that $k_0,j_0$ are such that for all $k,j$ satisfying $k\geq k_0$ and $j\geq j_0$ we have $\g{j+k}>\g{j}$; then by Lemma \ref{main2} \emph{(iv)}, $|{c}^{j+k}_{m}|<|{c}^{j}_{m}|$. Since by \eqref{Te} $\Te{j}\rightarrow \infty$, $\LIM{n}I_m^e(n)$ can be represented as a sum of integrals of $\tau_m^e(x)$ between the consecutive zeros of $\tau_m^e(x)$, which are points of the form $l\pi/m$, for $l$ an integer. These integrals are alternating in sign. Since $\Delta_j=\Te{j}-\Te{j-1}\rightarrow 0$, eventually the number of points $\Te{j}$ in any interval of length $\pi/m$ will be greater that $k_0$. Thus, since $|{c}^{j+k}_{m}|<|{c}^{j}_{m}|$ for all $k>k_0$,  for all sufficiently large $x$, $|\tau_m^e(x+\pi/m)|<|\tau_m^e(x)|$, and so the absolute values of integrals of $\tau_m^e(x)$ between the consecutive zeros of $\tau_m^e(x)$ will eventually be monotonically decreasing. Consequently,  
\begin{align}
\lim_{n\rightarrow\infty}I_m^e(n)=\int_{2\Te{1}}^{\infty}\tau_m^e(x)dx<\infty,
\end{align}
and the same applies to $I_m^o(n)$. 

Finally, to conclude that $\sigma_{m,n}$ converge as $n\rightarrow\infty$, it is enough to show that the sum of errors of approximation of $I_m^e(n)$ by the Riemann sum $\sigma_{m,n}^e$ and of $I_m^o(n)$ by the Riemann sum $\sigma_{m,n}^o$, i.e., that
\begin{align}
\mathcal{E}_m(n)=\sum_{j=1}^nc_m^{j}\left(\int_{2\Te{j-2}}^{2\Te{j}}{\e}^{\ii m x}d x-{\e}^{\ii m \; 2\Te{j-1}}(2\Delta_{j-1}+2\Delta_{j})\right)
\end{align}
also converge as $n\rightarrow\infty$. We achieve such a goal by applying the same technique several times: we reduce $\mathcal{E}_m(n)$ to Riemann sums for integrals of some damped complex exponentials but with the Fourier coefficients decreased by a factor of at least ${\omega^2}/{\g{2j-1}^2}$. Thus, after a few iterations, the resulting sums will be of the form $\OO{\sum_{j=1}^n(1/{\g{2j-1}})^\kappa}$ and thus absolutely convergent.


\section{Estimating $\mathcal{E}_m(n)$}

Note that
\begin{align}\begin{split}
\int_{2\Te{j-2}}^{2\Te{j}}{\e}^{\ii m x}d x-{\e}^{\ii m \; 2\Te{j-1}}(2\Delta_{j-1}+2\Delta_{j})=\hspace*{35mm}\\
{\e}^{\ii m\; 2\Te{j-1}}\left(\frac{\ii \left({\e}^{-2\ii m \Delta_{j-1}}-{\e}^{2\ii m \Delta_{j}}\right)}{2m(\Delta_{j-1}+\Delta_{j})}-1\right)(2\Delta_{j-1}+2\Delta_{j}).\end{split}
\end{align}

From \eqref{main} and \eqref{AA} we obtain
\begin{align}\label{e0}
\mult{n}\,{\e}^{\ii \del{n}}&=\zt{n}+\ztt{n}{\e}^{-\ii 2\Te{n-1}}.
\end{align}
Taking the complex conjugate of both sides we obtain
\begin{align}\label{e1}
\mult{n}\,{\e}^{-\ii \del{n}}&=\overline{\zt{n}+\ztt{n}{\e}^{-\ii 2\Te{n-1}}}.
\end{align}
By dividing each side of \eqref{e1} by the corresponding side of \eqref{e0} we get
\begin{align}\label{e2}
{{\e}^{2\ii \del{n}}}&=\frac{\ \ \zt{n}+\ztt{n}{\e}^{-\ii 2\Te{n-1}}\ \ }
{\overline{\zt{n}+\ztt{n}{\e}^{-\ii 2\Te{n-1}}}}=\frac{\zt{n}+\ztt{n}{\e}^{-\ii 2\Te{n-1}}}
{\overline{\zt{n}}+\overline{\ztt{n}}{\e}^{\ii 2\Te{n-1}}}.
\end{align}
Similarly, substituting $n$ by $n-1$ in \eqref{nnm}  we get
\begin{align}\label{r2}
\frac{{\e}^{\ii\del{n-1}}}{\mult{n-1}}&=\frac{{\zt{n-1}}-\overline{\ztt{n-1}}{\e}^{\ii 2\Te{n-1}}}{|{\zt{n-1}}|^2-|{\ztt{n-1}}|^2}.
\end{align}
Taking the complex conjugates of both sides of \eqref{r2} produces 
\begin{align}\label{r22}
\frac{{\e}^{-\ii\del{n-1}}}{\mult{n-1}}&=\frac{\ \ \overline{{\zt{n-1}}-\overline{\ztt{n-1}}{\e}^{\ii 2\Te{n-1}}}\ \ }{|{\zt{n-1}}|^2-|{\ztt{n-1}}|^2}.
\end{align}
Dividing both sides of  \eqref{r22} by the corresponding sides of  \eqref{r2}  yields
\begin{align}\label{r3}
{\e}^{-2\ii\del{n-1}}&=\frac{\ \ \overline{{\zt{n-1}}-\overline{\ztt{n-1}}{\e}^{\ii 2\Te{n-1}}}\ \ }{{\zt{n-1}}-\overline{\ztt{n-1}}{\e}^{\ii 2\Te{n-1}}}=\frac{\overline{{\zt{n-1}}}-{\ztt{n-1}}{\e}^{-\ii 2\Te{n-1}}}{{\zt{n-1}}-\overline{\ztt{n-1}}{\e}^{\ii 2\Te{n-1}}}. 
\end{align}
Combining \eqref{e2} with \eqref{r3} we get 
\begin{align*}
{\e}^{-\ii2m\del{n-1}}-{{\e}^{\ii 2m\del{n}}}&=\left(\frac{\ \ \overline{{\zt{n-1}}}-{\ztt{n-1}}{\e}^{-\ii 2\Te{n-1}}\ \ }{{\zt{n-1}}-\overline{\ztt{n-1}}{\e}^{\ii 2\Te{n-1}}} \right)^{m}
-\left(\frac{\ \ \zt{n}+\ztt{n}{\e}^{-\ii 2\Te{n-1}}\ \ }
{\overline{\zt{n}}+\overline{\ztt{n}}{\e}^{\ii 2\Te{n-1}}} \right)^{m}.
\end{align*}

Let us define
\begin{equation}\label{Ldef}
L_n(m,t)=\ii\left(
\left(\frac{\ \ \overline{{\zt{n-1}}}-{\ztt{n-1}}{\e}^{-\ii  t}\ \ }{{\zt{n-1}}-\overline{\ztt{n-1}}{\e}^{\ii  t}} \right)^{m}
-\left(\frac{\ \ \zt{n}+\ztt{n}{\e}^{-\ii  t}\ \ }
{\overline{\zt{n}}+\overline{\ztt{n}}{\e}^{\ii  t}}\right)^{m}\right)
\end{equation}
and so  
\begin{equation}\label{Lb1}
{\e}^{\ii m 2\Te{n-1}}\left(\frac{\ii \left({\e}^{-\ii2m\del{n-1}}-{{\e}^{\ii 2m\del{n}}}\right)}{2m(\Delta_{n-1}+\Delta_{n})}-1\right)={\e}^{\ii m 2\Te{n-1}}\left(\frac{L_n(m,2\Te{n-1})}{2m(\Delta_{n-1}+\Delta_{n})}-1\right).
\end{equation}

Thus, using \eqref{sumDel},
\begin{align}\label{ER}
\mathcal{E}_m(n)=\sum_{j=1}^nc_m^j{\e}^{\ii m 2\Te{j-1}}\left(\frac{ L_n(m,2\Te{j-1})}{m\,G_n\left(2\Te{j-1}\right)}-1\right)(2\Delta_{j-1}+2\Delta_{j}).
\end{align}
We want to expand $L_n(m,t)/G_n(t)$ into Fourier series; however, 
to obtain Fourier series coefficients which are monotonic in $\g{n}$, we again need to eliminate all finite differences from $L_n(m,t)/G_n(t)$. 

\begin{lemma} \label{lm}
\begin{align}\label{fracas}
\frac{\overline{\zt{n-1}}-{\ztt{n-1}}{\e}^{-\ii t}}{\zt{n-1}-\overline{\ztt{n-1}}{\e}^{\ii t}}&
=\frac{1-\frac{\omega^2}{2\g{2n-1}^2}-\frac{\ii\,\omega}{\g{2n-1}}-\frac{\omega^2}{2\g{2n-1}^2}{\e}^{-\ii t}}
{1-\frac{\omega^2}{2\g{2n-1}^2}+\frac{\ii\,\omega}{\g{2n-1}}-\frac{\omega^2}{2\g{2n-1}^2}{\e}^{\ii t}}+\frac{\ii\,\ds{2n-4}\sin t}{\g{2n-1}}+\OO{\frac{\eta_n}{\g{2n-1}^2}};\\
\label{frac2}\frac{{\zt{n}}+{\ztt{n}}{\e}^{-\ii t}}{\overline{\zt{n}}+\overline{\ztt{n}}{\e}^{\ii t}}&=
\frac{1-\frac{\omega^2}{2\g{2n-1}^2}+\frac{\ii\,\omega}{\g{2n-1}}+\frac{\omega^2}{2\g{2n-1}^2}{\e}^{-\ii t}}
{1-\frac{\omega^2}{2\g{2n-1}^2}-\frac{\ii\,\omega}{\g{2n-1}}+\frac{\omega^2}{2\g{2n-1}^2}{\e}^{\ii t}}-\frac{\ii\,\ds{2n-2}\sin t}{\g{2n-1}}+\OO{\frac{\eta_n}{\g{2n-1}^2}}.
\end{align}
\end{lemma}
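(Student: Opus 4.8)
The plan is to derive both identities \eqref{fracas} and \eqref{frac2} by the same routine: substitute the asymptotic representations of $\zt{n}, \ztt{n}$ from Lemma~\ref{basicn} and of $\zt{n-1}, \ztt{n-1}$ from Lemma~\ref{basicm} into the left-hand sides, and then expand the resulting ratios to first order, collecting all terms that involve finite differences but are not already $\OO{\eta_n/\g{2n-1}^2}$. The key preliminary observation is that Lemma~\ref{ztztt} already gives us clean asymptotic expansions of the numerators and denominators appearing here: the quantity $\overline{\zt{n-1}}-{\ztt{n-1}}{\e}^{-\ii t}$ is simply the complex conjugate of $\zt{n-1}-\overline{\ztt{n-1}}{\e}^{\ii t}$, whose expansion is \eqref{ztt1}, and likewise ${\zt{n}}+{\ztt{n}}{\e}^{-\ii t}$ is given by \eqref{zt1} with its conjugate in the denominator. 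So the first step is to write each of the four relevant quantities as its ``principal part'' (the $t$-dependent expression in $x=\omega/\g{2n-1}$ with no finite differences) plus a correction of the form $\frac{(\text{finite difference})\cdot(\text{trig})}{2\g{2n-1}} + \OO{\eta_n/\g{2n-1}^2}$.

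Next I would form the ratio. Writing a numerator as $N = N_0 + \delta_N$ and a denominator as $D = D_0 + \delta_D$, where $\delta_N, \delta_D$ are the small corrections, one has $N/D = N_0/D_0 + (\delta_N D_0 - N_0 \delta_D)/D_0^2 + \OO{\eta_n^2/\g{2n-1}^2}$, and since $N_0/D_0$ has modulus $1+\OO{1/\g{2n-1}}$ and $D_0 = 1+\OO{1/\g{2n-1}}$, the error terms of order $\eta_n^2$ are absorbed into $\OO{\eta_n/\g{2n-1}^2}$. The main point is then a cancellation: the non-absolutely-summable finite-difference terms $-(\s{2n-4}+\s{2n-3})/(2\g{2n-1})$ appearing in both the numerator \emph{and} denominator of the first ratio (from \eqref{ztt1}) cancel in the quotient, exactly as in the analogous cancellation already noted in the proof of Lemma~\ref{FG1}; and similarly for the second ratio with $-(\s{2n-2}+\s{2n-1})/(2\g{2n-1})$. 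What survives is only the $\ii\,\ds{}\sin t$ contribution. For \eqref{fracas}, the numerator carries $-\ii\,\ds{2n-4}\sin t/(2\g{2n-1})$ and the denominator carries $+\ii\,\ds{2n-4}\sin t/(2\g{2n-1})$; dividing, these combine to produce $+\ii\,\ds{2n-4}\sin t/\g{2n-1}$ (note the factor of two, and the sign, coming from $(1+u)/(1+v)\approx 1+u-v$ with $u=-v$). The analogous computation for \eqref{frac2}, starting from \eqref{zt1} and its conjugate, produces $-\ii\,\ds{2n-2}\sin t/\g{2n-1}$, the sign flip being due to the numerator of the second ratio being the unconjugated $\zt{n}+\ztt{n}{\e}^{-\ii t}$ rather than a conjugate.

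I expect the only real bookkeeping obstacle to be keeping track of the various $\cos t$ versus $\sin t$ terms and their signs: each of \eqref{zt1} and \eqref{ztt1} contains \emph{both} a $\ds{}\cos t/(2\g{2n-1})$ term and a $\ii\,\ds{}\sin t/(2\g{2n-1})$ term, and one must check that the $\cos t$ pieces, together with the $\omega^2/(2\g{2n-1}^2)$ terms and the constant real parts, reorganize into the stated ``principal part'' ratio in $x=\omega/\g{2n-1}$ (whose numerator and denominator are precisely $1-\frac{x^2}{2}\mp\ii x \mp \frac{x^2}{2}{\e}^{\mp \ii t}$), leaving only the $\sin t$ finite-difference term outside. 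This is a finite symbolic manipulation; it is exactly the kind of computation the author verifies in \emph{Mathematica} (cf.\ the footnoted files), so I would present the two displays \eqref{fracas} and \eqref{frac2} as following ``by substitution of Lemma~\ref{basicn} and Lemma~\ref{basicm}, a first-order expansion of the quotient, and the cancellation of the non-summable finite-difference terms between numerator and denominator,'' and leave the brute-force expansion to the symbolic computation. No single step is conceptually hard; the content is entirely in the cancellation that makes the remaining finite-difference term absolutely summable (since $\sum |\ds{2n}|/\g{2n} < \infty$ by condition~\ref{c6}), which is what makes $L_n(m,t)/G_n(t)$ amenable to the Fourier-coefficient monotonicity arguments that follow.
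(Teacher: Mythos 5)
Your proposal is correct and takes essentially the same route as the paper: the paper's proof likewise starts from Lemma~\ref{ztztt}, takes first-order expansions of the reciprocals of the two denominators, multiplies by the numerator expansions (the numerators being conjugates of the quantities in \eqref{zt1} and \eqref{ztt1}), and relies on the cancellation of the real correction terms $-(\s{2n-4}+\s{2n-3})/(2\g{2n-1})$ and the $\ds{}\cos t$ pieces between numerator and denominator, leaving only the $\ii\,\ds{}\sin t$ contribution, with the remaining algebra delegated to symbolic computation. One minor slip in your narrative for \eqref{fracas}: the numerator $\overline{\zt{n-1}}-\ztt{n-1}\e^{-\ii t}$, being the conjugate of \eqref{ztt1}, carries $+\ii\,\ds{2n-4}\sin t/(2\g{2n-1})$ and the denominator carries the minus sign (you stated the labels the other way around), but your final coefficient $+\ii\,\ds{2n-4}\sin t/\g{2n-1}$ is the correct one.
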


\begin{proof}
From Lemma~\ref{ztztt} we obtain by taking the first order expansions of the reciprocals\footnote{Details of the symbolic calculations involved in Lemma~\ref{lm} are in \emph{Mathematica} file \texttt{OP/symbolic/8\_lemma\_\ref{lm}.nb}; numerical corroboration of this lemma is in file \texttt{OP/numerical/4\_lemma\_\ref{lm}.nb}}
\begin{align}\begin{split}
\frac{1}{\zt{n-1}-\overline{\ztt{n-1}}{\e}^{\ii t}}=\frac{1}{1-\frac{\omega^2}{2\g{2n-1}^2}+\frac{\ii\,\omega}{\g{2n-1}}-\frac{\omega^2}{2\g{2n-1}^2}{\e}^{\ii t}}+\frac{\ds{2n-4}\cos t}{2\g{2n-1}}+\frac{\s{2n-4}+\s{2n-3}}{2\g{2n-1}}\nonumber\\+\frac{\ii\,\ds{2n-4}\sin t}{2\g{2n-1}}+\OO{\frac{\eta_n}{\g{2n-1}^2}};
\end{split}\end{align}
\begin{align}\begin{split}
\frac{1}{\overline{\zt{n}}+\overline{\ztt{n}}{\e}^{\ii t}}=\frac{1}{1-\frac{\omega^2}{2\g{2n-1}^2}-\frac{\ii\,\omega}{\g{2n-1}}+\frac{\omega^2}{2\g{2n-1}^2}{\e}^{\ii t}}-\frac{\ds{2n-2}\cos t}{2\g{2n-1}}+\frac{\s{2n-2}+\s{2n-1}}{2\g{2n-1}}-\\
\frac{\ii\,\ds{2n-2}\sin t}{2\g{2n-1}}+\OO{\frac{\eta_n}{\g{2n-1}^2}}.
\end{split}
\end{align}

Using Lemma~\ref{ztztt} again, after the corresponding multiplications and series expansion of the reciprocals appearing in terms  containing finite differences, we obtain \eqref{fracas} and \eqref{frac2}.
\end{proof}

We now define 
\begin{align}
l(m,x,t)=\ii\left(\left(\frac{1-\frac{x^2}{2}-\ii x-\frac{x^2}{2}{\e}^{-\ii t}}
{1-\frac{x^2}{2}+\ii x-\frac{x^2}{2}{\e}^{\ii t}}\right)^m-
\left(\frac{1-\frac{x^2}{2}+\ii x+\frac{x^2}{2}{\e}^{-\ii t}}{1-\frac{x^2}{2}-\ii x+\frac{x^2}{2}{\e}^{\ii t}}\right)^m\right).
\end{align}
Note that 
\begin{align}\label{conjugl}
l(-m,x,t)&=-\overline{l(m,x,t)};\\
l(m,x,\pi-t)&=\overline{l(m,x,t)}.\label{pimin}
\end{align}
Taking a first order expansion of the difference $L_n(m,t)-l\!\left(m,\frac{\omega}{\g{2n-1}},t\right)$ we obtain the following lemma.\footnote{Lemma~\ref{lm3} is symbolically verified in file \texttt{OP/symbolic/9\_lemma\_\ref{lm3}.nb} and numerically in file \texttt{OP/numerical/5\_lemmas\_\ref{lm3}\_and\_\ref{LG}.nb.}}
\begin{lemma}\label{lm3}
\begin{align}\label{ll2}
L_n(m,t)=l\!\left(m,\frac{\omega}{\g{2n-1}},t\right)-\frac{ m(\ds{2n-4}+\ds{2n-2})\sin t}{\g{2n-1}}+\OO{\frac{\eta_n}{\g{2n-1}^2}}.
\end{align}\hfill$\Box$
\end{lemma}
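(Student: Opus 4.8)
The plan is to establish \eqref{ll2} by a direct first-order Taylor expansion of the quantity $L_n(m,t)$ defined in \eqref{Ldef}, using as inputs the asymptotic expansions \eqref{fracas} and \eqref{frac2} from Lemma~\ref{lm}. First I would observe that $L_n(m,t)$ is built out of the $m$-th powers of the two ratios
\[
R_1 = \frac{\overline{\zt{n-1}}-{\ztt{n-1}}{\e}^{-\ii t}}{\zt{n-1}-\overline{\ztt{n-1}}{\e}^{\ii t}}, \qquad
R_2 = \frac{{\zt{n}}+{\ztt{n}}{\e}^{-\ii t}}{\overline{\zt{n}}+\overline{\ztt{n}}{\e}^{\ii t}},
\]
and that $l(m,x,t)$ is built in exactly the same way from the ``finite-difference-free'' ratios obtained by setting $x=\omega/\g{2n-1}$ and dropping the correction terms. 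By Lemma~\ref{lm}, each $R_k$ equals the corresponding clean ratio, call it $R_k^0$, plus a term of size $\OO{1/\g{2n-1}}$ involving a single second difference $\ds{\cdot}$ times $\sin t$, plus an $\OO{\eta_n/\g{2n-1}^2}$ remainder. Since both $|R_k|$ and $|R_k^0|$ are equal to $1$ (each is a quotient of a complex number by its conjugate), the ratios lie on the unit circle, so the map $z\mapsto z^m$ has bounded derivative there and the expansion $R_k^m = (R_k^0)^m + m (R_k^0)^{m-1}(R_k-R_k^0) + \OO{|R_k-R_k^0|^2}$ is uniform in $t$. The squared error term $\OO{|R_k-R_k^0|^2}$ is $\OO{1/\g{2n-1}^2}$, hence absorbed into $\OO{\eta_n/\g{2n-1}^2}$.

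Next I would compute the linear term. Writing $R_1 = R_1^0 + \frac{\ii\,\ds{2n-4}\sin t}{\g{2n-1}} + \OO{\eta_n/\g{2n-1}^2}$ and $R_2 = R_2^0 - \frac{\ii\,\ds{2n-2}\sin t}{\g{2n-1}} + \OO{\eta_n/\g{2n-1}^2}$, the key simplification is that to leading order $R_k^0 = 1 + \OO{1/\g{2n-1}}$ (the clean ratios tend to $1$ as $x\to 0$), so $m(R_k^0)^{m-1}=m+\OO{1/\g{2n-1}}$, and multiplying this against the $\OO{1/\g{2n-1}}$ correction already introduces an extra $\OO{1/\g{2n-1}^2}$ error when we replace $(R_k^0)^{m-1}$ by $1$ — except that the correction carries a stray factor that must be tracked, since $\ds{\cdot}/\g{2n-1}$ need not be absolutely summable after multiplication by $\g{2n-1}$. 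Here the point is that $\eta_n$ already dominates $|\ds{2n-4}|+|\ds{2n-2}|$ up to the shift between $\g{2n-k}$ and $\g{2n-1}$, which is $1+\OO{1/\g{2n-1}}$, so the discrepancy from replacing $(R_k^0)^{m-1}$ by $1$ is genuinely $\OO{\eta_n/\g{2n-1}^2}$. Assembling the $\ii$ prefactor from \eqref{Ldef}: the $R_1$ contribution gives $\ii\cdot m\cdot\frac{\ii\,\ds{2n-4}\sin t}{\g{2n-1}} = -\frac{m\,\ds{2n-4}\sin t}{\g{2n-1}}$, and the $-R_2$ contribution gives $-\ii\cdot m\cdot\bigl(-\frac{\ii\,\ds{2n-2}\sin t}{\g{2n-1}}\bigr) = -\frac{m\,\ds{2n-2}\sin t}{\g{2n-1}}$; summing yields exactly $-\frac{m(\ds{2n-4}+\ds{2n-2})\sin t}{\g{2n-1}}$, matching \eqref{ll2}. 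The leading clean terms $\ii\bigl((R_1^0)^m - (R_2^0)^m\bigr)$ are by definition $l\!\left(m,\frac{\omega}{\g{2n-1}},t\right)$.

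The main obstacle I anticipate is purely bookkeeping: making sure that every place where a factor $(R_k^0)^{j}$ for $1\le j\le m-1$ is replaced by $1$, or where a shift $\g{2n-k}\to\g{2n-1}$ is performed inside a term already carrying a finite difference, the incurred error is genuinely of order $\OO{\eta_n/\g{2n-1}^2}$ and not merely $\OO{1/\g{2n-1})}$ times something non-summable. This is the same phenomenon the paper has repeatedly handled via the observation that ratios like $\g{2n-1}/\g{2n-k}$ equal $1+\OO{\eta_n/\g{2n-1}}$, so I would reuse \eqref{rr3} and its analogues verbatim. Since the whole computation is a routine (if tedious) symbolic manipulation, I would carry it out with the help of a symbolic algebra check, as the paper does for its companion lemmas, and simply record that the statement follows ``by taking a first order expansion of the difference $L_n(m,t)-l\!\left(m,\frac{\omega}{\g{2n-1}},t\right)$ and using Lemma~\ref{lm}.''
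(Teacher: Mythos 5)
Your proposal is correct and follows essentially the same route as the paper, which likewise proves the lemma by taking a first-order expansion of the difference $L_n(m,t)-l\!\left(m,\frac{\omega}{\g{2n-1}},t\right)$ using the asymptotic representations \eqref{fracas} and \eqref{frac2} of Lemma~\ref{lm}, delegating the bookkeeping to a symbolic computation. One small correction to your wording: the quadratic remainder is not absorbed because it is $\OO{\frac{1}{\g{2n-1}^2}}$ (that alone would not give $\OO{\frac{\eta_n}{\g{2n-1}^2}}$, since $\eta_n\rightarrow 0$), but because $|R_k-R_k^0|=\OO{\frac{\eta_n}{\g{2n-1}}}$ with $\eta_n$ bounded, so its square is $\OO{\frac{\eta_n^2}{\g{2n-1}^2}}=\OO{\frac{\eta_n}{\g{2n-1}^2}}$.
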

Multiplying the corresponding sides of \eqref{ll2} and \eqref{recG} we obtain the following lemma.\footnote{Lemma~\ref{LG} is verified in file \texttt{OP/symbolic/10\_lemma\_20.nb} and numerically in file \texttt{OP/numerical/5\_lemmas\_\ref{lm3}\_and\_\ref{LG}.nb.}}
\begin{lemma}\label{LG}
\begin{align}\label{lg}
\frac{L_n(m,t)}{{m\,G_n(t)}}=\frac{l\!\left(m,\frac{\omega}{\g{2n-1}},t\right)}{m\,g\!\left(\frac{\omega}{\g{2n-1}},t\right)}-\frac{(\ds{2n-4}+\ds{2n-2})^2\sin^2 t}{16\omega^2}+\OO{\frac{\eta_n}{\g{2n-1}}}.
\end{align}\hfill$\Box$
\end{lemma}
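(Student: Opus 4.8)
The plan is to carry out the multiplication announced just before the statement: substitute \eqref{ll2} and \eqref{recG} into $\dfrac{L_n(m,t)}{m\,G_n(t)}=\dfrac1m\,L_n(m,t)\cdot\dfrac1{G_n(t)}$ and sort the resulting products into the main term, the displayed quadratic finite-difference term, and remainders of size $\OO{\eta_n/\g{2n-1}}$. Writing $x=\omega/\g{2n-1}$ for brevity, we must expand
\begin{align*}
\frac{L_n(m,t)}{m\,G_n(t)}&=\frac1m\left(l(m,x,t)-\frac{m(\ds{2n-4}+\ds{2n-2})\sin t}{\g{2n-1}}+\OO{\frac{\eta_n}{\g{2n-1}^2}}\right)\\
&\quad\times\left(\frac{1}{g(x,t)}+\frac{\g{2n-1}(\ds{2n-4}+\ds{2n-2})\sin t}{16\omega^2}+\OO{\eta_n}\right).
\end{align*}
Beyond the main term $l(m,x,t)/(m\,g(x,t))$, the products to be understood are the three cross products built from the two explicit finite-difference summands, together with all products carrying an $\OO{\eta_n/\g{2n-1}^2}$ or $\OO{\eta_n}$ factor.

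First I would record the small-$x$ behaviour of the two building blocks. Expanding the logarithms in \eqref{ggg} gives $g(x,t)=4x+\OO{x^2}$, which (consistently with \eqref{serG}) shows that $g(\omega/\g{2n-1},t)$ is bounded away from $0$, uniformly in $t$, once $n$ is large and $|\omega|\le B$; hence $\dfrac1{g(x,t)}=\dfrac1{4x}+\OO{1}=\OO{\g{2n-1}}$. Likewise, expanding the two $m$-th powers in the definition of $l(m,x,t)$ gives $l(m,x,t)=4mx+\OO{x^2}=\OO{1/\g{2n-1}}$.

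The point of the lemma is the cancellation of the contributions that are linear in the finite differences. Using $l(m,x,t)=4mx+\OO{x^2}$ and $\dfrac1{g(x,t)}=\dfrac1{4x}+\OO{1}$, the two ``mixed'' cross products are
\begin{align*}
\frac1m\,l(m,x,t)\cdot\frac{\g{2n-1}(\ds{2n-4}+\ds{2n-2})\sin t}{16\omega^2}&=\frac{(\ds{2n-4}+\ds{2n-2})\sin t}{4\omega}+\OO{\frac{\eta_n}{\g{2n-1}}},\\
-\frac{(\ds{2n-4}+\ds{2n-2})\sin t}{\g{2n-1}}\cdot\frac1{g(x,t)}&=-\frac{(\ds{2n-4}+\ds{2n-2})\sin t}{4\omega}+\OO{\frac{\eta_n}{\g{2n-1}}},
\end{align*}
and these sum to $\OO{\eta_n/\g{2n-1}}$. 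The only surviving non-remainder cross term is the product of the two explicit finite-difference summands,
\[
\frac1m\left(-\frac{m(\ds{2n-4}+\ds{2n-2})\sin t}{\g{2n-1}}\right)\frac{\g{2n-1}(\ds{2n-4}+\ds{2n-2})\sin t}{16\omega^2}=-\frac{(\ds{2n-4}+\ds{2n-2})^2\sin^2 t}{16\omega^2},
\]
which is exactly the term displayed in \eqref{lg}.

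It then remains to check that every other product lands in $\OO{\eta_n/\g{2n-1}}$: a product of $l(m,x,t)=\OO{1/\g{2n-1}}$, or of the remainder $\OO{\eta_n/\g{2n-1}^2}$, against the remainder $\OO{\eta_n}$, against $1/g(x,t)=\OO{\g{2n-1}}$, or against $\frac{\g{2n-1}(\ds{2n-4}+\ds{2n-2})\sin t}{16\omega^2}=\OO{\g{2n-1}\eta_n}$ is indeed $\OO{\eta_n/\g{2n-1}}$, using $|\ds{2n-4}+\ds{2n-2}|\le\eta_n$ together with $\eta_n^2\le\eta_n$ for $n$ large (since $\eta_n\to0$). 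The only real work here is the bookkeeping of the cancellation above and keeping all the $O$-estimates uniform in $t$; given the number of terms, this is where it is convenient to double-check the computation symbolically, as in the accompanying \emph{Mathematica} file referenced in the footnote to the statement.
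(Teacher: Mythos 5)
Your proposal is correct and follows essentially the same route as the paper: the paper's proof is simply ``multiply the corresponding sides of \eqref{ll2} and \eqref{recG}'' (with the bookkeeping delegated to a symbolic computation), and you carry out exactly that multiplication, making explicit the cancellation of the cross terms linear in $\ds{2n-4}+\ds{2n-2}$ via $l(m,x,t)=4mx+\OO{x^2}$ and $1/g(x,t)=1/(4x)+\OO{1}$, which is the only point of substance.
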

Since $\del{n-1}+\del{n}=\OO{1/\g{2n-1}}$, Lemma \ref{LG}, together with \eqref{asdel} and conditions \ref{c5} and \ref{c6},  imply that it  suffices to show that for every $m$ the sum
\begin{align}\label{estar}
\mathcal{E}_m^\ast(n)=\sum_{j=1}^nc_m^j{\e}^{\ii 2m\Te{j-1}}\left(\frac{ l\!\left(m,\frac{\omega}{\g{2j-1}},2\Te{j-1}\right)}{m\,g\!\left(\frac{\omega}{\g{2j-1}},2\Te{j-1}\right)}-1\right)(2\Delta_{j-1}+2\Delta_{j})
\end{align}
converges as $n\rightarrow \infty$.

We now consider $m$ and $x$ fixed parameters and expand into Fourier series with respect to variable $t$ functions 
\begin{equation}
\varepsilon_{m}(x,t)={\e}^{\ii m t}\left(\frac{ l\!\left(m,x,t\right)}{m\,g\!\left(x,t\right)}-1\right). 
 \end{equation}
Thus, with 
\begin{align}\label{cff}
f_k^{m}(x)=\frac{1}{2\pi}\int_{-\pi}^{\pi}\varepsilon_{m}(x,t) {\e}^{-\ii k t}dt=\frac{1}{2\pi}\int_{-\pi}^{\pi}\left(\frac{ l\!\left(m,x,t\right)}{m\,g\!\left(x,t\right)}-1\right){\e}^{\ii (m- k) t}dt,
\end{align}
we have
\begin{equation}
\varepsilon_{m}(x,t)=\sum_{k=-\infty}^{\infty}f_k^m(x) {\e}^{\ii k t}.
\end{equation}
We also let 
\begin{equation}
f_k^{m,n}=f_k^m\left(\frac{\omega}{\g{2n-1}}\right).
\end{equation}

The following lemma will be proved in Section~\ref{fse}.
\begin{lemma}\label{key2}
Let $f_k^{m,n}$ be such that 
\begin{equation}
\varepsilon_{m}\left(\frac{\omega}{\g{2n-1}},t\right)=\sum_{k=-\infty}^{\infty}f_k^{m,n}{\e}^{\ii k t};
\end{equation}
then for every fixed $m\neq 0$ and $k$, the coefficients $f_k^{m,n}$ satisfy the following properties for all sufficiently large $n$: 
\begin{enumerate}
\item[(i)] $f^{m,n}_{k}=\overline{f^{-m,n}_{-k}}$;
\item[(ii)] if $m$ and $k$ are of the same parity, the coefficients $f_k^{m,n}$ are real; otherwise they are purely imaginary;
\item[(iii)] the absolute values $|f_k^{\,m,n}|$ of the coefficients 
$f_k^{\,m,n}$ form an almost decreasing sequence with respect to $n$;
\item[(iv)] $|f_k^{\,m,n}|=\OO{{\g{2n-1}^{-2}}}$;
\item[(v)] if $|k|>|m|$, then  $|f_k^{m,n}|=\OO{{\g{2n-1}^{-2(|k|-|m|)}}}$. 
\end{enumerate}
\end{lemma}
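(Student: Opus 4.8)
The plan is to derive all five properties from the explicit structure of the function $l(m,x,t)$ and $g(x,t)$, exploiting the same symmetries already recorded in \eqref{conjugl}, \eqref{pimin} and the analogous facts for $g$. First I would record the basic symmetries of the integrand $\varepsilon_m(x,t)={\e}^{\ii m t}\bigl(l(m,x,t)/(m\,g(x,t))-1\bigr)$. Since $g(x,t)$ is real-valued and $2\pi$-periodic (it equals $2(\del{n-1}+\del{n})$ up to the finite-difference corrections, and as a function of $x,t$ it is manifestly real), and since by \eqref{conjugl} we have $l(-m,x,t)=-\overline{l(m,x,t)}$, a direct computation gives $\overline{\varepsilon_m(x,t)}=\varepsilon_{-m}(x,-t)$; together with \eqref{cff} this yields $f_k^{m,n}=\overline{f_{-k}^{-m,n}}$, which is (i). For (ii) I would use \eqref{pimin}, namely $l(m,x,\pi-t)=\overline{l(m,x,t)}$, together with the corresponding identity $g(x,\pi-t)=g(x,t)$ (again checked directly from \eqref{ggg}, since replacing $t$ by $\pi-t$ sends ${\e}^{\ii t}\mapsto-{\e}^{-\ii t}$ and the four logarithmic terms permute among themselves with a conjugation); this shows $\varepsilon_m(x,\pi-t)=(-1)^m\overline{\varepsilon_m(x,t)}$, and substituting $t\mapsto\pi-t$ in the Fourier integral \eqref{cff} produces $f_k^{m,n}=(-1)^{m-k}\overline{f_k^{m,n}}$. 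Hence $f_k^{m,n}$ is real when $m\equiv k\pmod 2$ and purely imaginary otherwise, which is (ii).

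Next I would treat the magnitude estimates (iv) and (v), which are the analytic heart of the lemma. The point is that $l(m,x,t)/(m\,g(x,t))-1$ vanishes to order $x^2$ at $x=0$: expanding $l(m,x,t)$ and $g(x,t)$ as power series in $x$ (both are analytic in $x$ near $0$, uniformly for $t\in\Rset$, since the arguments of all logarithms stay near $1$), the ratio $l(m,x,t)/(m\,g(x,t))$ equals $1+\OO{x^2}$ with the $\OO{x^2}$ term a trigonometric polynomial in $t$ of bounded degree. This is exactly the same phenomenon that produced \eqref{serG} and \eqref{recG}. Writing $x=\omega/\g{2n-1}$, the function $\varepsilon_m(\omega/\g{2n-1},t)$ is therefore $\OO{\g{2n-1}^{-2}}$ uniformly in $t$, and since each Fourier coefficient is bounded by the sup-norm of the function this gives (iv). For (v), the extra decay when $|k|>|m|$ comes from the fact that the only way the factor ${\e}^{\ii m t}$ in $\varepsilon_m$ can be cancelled to produce a high-frequency mode ${\e}^{\ii k t}$ is through the high-frequency content of $l(m,x,t)/(m\,g(x,t))-1$, and by the series expansion each additional power of ${\e}^{\pm\ii t}$ beyond the ``$m$-th'' one carries a factor $x^2$; more precisely, one expands $l(m,x,t)/(m\,g(x,t))$ in $x$ and observes that the coefficient of ${\e}^{\ii \ell t}$ in the $x^{2r}$-term vanishes unless $|\ell|\le |m|+2r$ (the numerator and denominator of the fractions in $l$ are, before raising to the $m$-th power, of the form $1+\OO{x}+\OO{x^2}{\e}^{\pm\ii t}$, so raising to the $m$-th power and taking the ratio the frequency-$\ell$ content with $|\ell|>|m|$ only appears at order $x^{2(|\ell|-|m|)}$ or higher). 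Substituting $x=\omega/\g{2n-1}$ then gives $|f_k^{m,n}|=\OO{\g{2n-1}^{-2(|k|-|m|)}}$ for $|k|>|m|$, which is (v) and also sharpens (iv) in that range.

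Finally, for the monotonicity property (iii) I would argue exactly as in the proof of Lemma~\ref{main2}(iv): each $f_k^{m,n}$ depends on $n$ only through the single real parameter $x=\omega/\g{2n-1}$, so it suffices to show that $x\mapsto|f_k^m(x)|$ is monotonic on the relevant interval $(0,\omega/(4\g{j_0}))$ (where all the series converge absolutely and dominate their tails). By (ii), $f_k^m(x)$ is either real or purely imaginary for all small $x$, so $|f_k^m(x)|$ equals $\pm f_k^m(x)$ or $\pm\ii f_k^m(x)$, a single analytic branch; differentiating the integral representation \eqref{cff} in $x$ and using the leading-order behaviour from the $x^2$-expansion shows the derivative has a fixed sign for $x$ small enough. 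Hence $|f_k^{m,n}|$ is monotonic in $n$ for $n$ large, i.e. forms an almost decreasing sequence, giving (iii). The main obstacle I anticipate is (v): keeping careful track of which Fourier modes appear at which order in the $x$-expansion of the $m$-th powers of the two fractions in $l(m,x,t)$ is combinatorially delicate, and one must verify that no cancellation-free low-order contribution to a high mode sneaks in; the cleanest route is probably to write each fraction as $1+x\,u(t)+x^2\,v(t,{\e}^{\pm\ii t})$ with $u$ of degree $0$ and $v$ a Laurent polynomial of degree $1$, raise to the $m$-th power by the multinomial theorem, and read off the frequency bound by induction on the power of $x$.
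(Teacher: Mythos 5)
Your handling of (i), (ii) and (iv) is essentially the paper's own route: (ii) is the reflection $t\mapsto\pi-t$ argument of Lemma~\ref{realb} using \eqref{pimin} and \eqref{gpi}, and (iv) is the second-order expansion $l(m,x,t)=4mx-2mx^2\sin t+\OO{x^3}=m\,g(x,t)+\OO{x^3}$ of Lemma~\ref{lm26}. Two small repairs are needed, though. In (i) the correct identity is $\overline{\varepsilon_m(x,t)}=\varepsilon_{-m}(x,t)$, with no reflection $t\mapsto-t$: since $g$ is real, \eqref{conjugl} alone gives it, whereas your version $\overline{\varepsilon_m(x,t)}=\varepsilon_{-m}(x,-t)$ is false ($g$ is not even in $t$, as $g(x,t)=4x-2x^2\sin t+\OO{x^3}$ shows) and, if it were true, would yield $f_k^{m,n}=\overline{f_k^{-m,n}}$ rather than (i). In (iii), the sign of $\frac{d}{dx}f_k^m(x)$ cannot simply be read off a leading term of the $x$-expansion, which may vanish for particular $(m,k)$; argue instead as in Lemma~\ref{mono}, via analyticity of $f_k^m(z)$ on a small disc (Morera) from the representation \eqref{cff}, noting that if the derivative vanished identically then $f_k^m\equiv f_k^m(0)=0$ and the claim is trivial.

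The genuine gap is in (v). Your bookkeeping ``each additional power of ${\e}^{\pm\ii t}$ costs a factor $x^2$'' is correct for the numerator $l(m,x,t)$ (in each fraction the ${\e}^{\pm\ii t}$ terms carry $x^2/2$), but it fails for the factor $1/(m\,g(x,t))$: since $g(x,t)=4x-2x^2\sin t+\OO{x^3}$, the frequency-$(\pm1)$ part of $g$ is only one power of $x$ below its mean $4x$, so the geometric expansion of $1/g$ produces frequency-$\ell$ content of relative size $x^{|\ell|}$, not $x^{2|\ell|}$. This is not an artifact of crude estimation: in the variable $z={\e}^{\ii t}$ the function $g^{\ast}(x,z)$ has a zero at $z=pl_1(x)\approx \ii x/4$ inside the unit circle, so the Laurent coefficients of $1/g^{\ast}$ genuinely decay only like $(x/4)^{|\ell|}$. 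Hence a multinomial expansion applied separately to $l$ and to $1/g$ can deliver at best bounds of the type $\OO{x^{|k|-|m|}}$ --- which is also all that your stated criterion ``frequency $\ell$ absent from the $x^{2r}$-term unless $|\ell|\le|m|+2r$'' would give --- and the additional decay needed for $\OO{x^{2(|k|-|m|)}}$ must come from a cancellation between numerator and denominator. That cancellation is precisely the fact $l^{\ast}(m,x,pl_1(x))=0$ exploited in the paper's Lemma~\ref{resple}: it makes $pl_1(x)$ a removable singularity of $l^{\ast}/g^{\ast}$, so the contour $|z|=1$ can be pushed down to $|z|=x^2$ (the branch cut of $g^{\ast}$ lies inside $|z|\le x^2/2$), giving $|f_k^m(x)|=\OO{x^{2(m-k)}}$ and then the case $k>m$ via (i). Your sketch never detects this cancellation, and without it the proposed induction on powers of $x$ cannot reach the exponent $2(|k|-|m|)$; if you want to stay with power series rather than contours, you would have to verify the order-by-order cancellations that encode $l^{\ast}(m,x,pl_1(x))=0$, which is exactly the delicate point your plan leaves open.
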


The sum $\mathcal{E}_m^\ast(n)$ given by \eqref{estar} can be represented as 
\begin{align}\begin{split}
\mathcal{E}_m^\ast(n)=\sum_{j=1}^n\sum_{|k|\leq\frac{\kappa+|m|}{2}-1}c_m^j f_k^{m,j}{\e}^{\ii k 2\Te{j-1}}(2\Delta_{j-1}+2\Delta_{j})+\\
\sum_{j=1}^n\sum_{|k|>\frac{\kappa+|m|}{2}-1}c_m^j f_k^{m,j}{\e}^{\ii k t}(2\Delta_{j-1}+2\Delta_{j}).\end{split}
\end{align}
By Lemma~\ref{main2}$(v)$ and Lemma~\ref{key2}$(iv)$, 
\begin{align}\label{increase}
|c_m^jf_k^{m,j}|=\OO{{\g{2j-1}}^{-(2|k|-|m|)}}.
\end{align}
Thus, for $|k|\geq1/2(\kappa+|m|)-1$ equation \eqref{increase} implies $c_m^jf_k^{m,j}=\OO{{\g{2j-1}}^{-(\kappa-1)}}$. Consequently,  the second sum is absolutely convergent for the same reason as the sum \eqref{lbl3}. We now show that for every $m$ such that $1\leq m\leq \kappa-2$ and every $k$ such that $| k |\leq \frac{\kappa+m}{2}-1$, the sum
\begin{align}\label{mk}
\mathcal{E}_{m,k}^\ast(n)=\sum_{j=1}^n(c_m^j f_k^{m,j}{\e}^{\ii k 2\Te{j-1}}+c_{-m}^j f_{-k}^{-m,j}{\e}^{-\ii k 2\Te{j-1}})(2\Delta_{j-1}+2\Delta_{j})
\end{align}
converges as $n\rightarrow \infty$. Note that by Lemma~\ref{main2}$(i)$  and Lemma~\ref{key2}$(i)$, $c_{-m}^j f_{-k}^{-m,j}=\overline{c_m^j f_k^{m,j}}$;  by Lemma~\ref{main2}$(ii)$ and $(iii)$ and Lemma~\ref{key2}$(ii)$ the product $c_m^j f_k^{m,j}$ is either purely imaginary or real; by Lemma~\ref{main2}$(iv)$ and Lemma~\ref{key2}$(iii)$,  $|c_m^j f_k^{m,j}|$ is an almost decreasing sequence. Finally, by Lemma~\ref{main2}$(v)$ and Lemma~\ref{key2}$(iv)$ 
\begin{align}
|c_m^j f_k^{m,j}|=\OO{\g{2j-1}^{-(m+2)}}.
\end{align}
Thus, the sums $\mathcal{E}_{m,k}^\ast(n)$ are of the same form as the sum $\sigma_{m,n}$ given by \eqref{smn}, except that the coefficients $c_m^j$ are replaced by coefficients $c_m^j f_k^{m,j}$ whose absolute values are smaller by a factor of at least $\OO{\g{2j-1}^{-2}}$. Consequently,  we can repeat on all of the finitely many resulting sums the entire procedure which we applied  to $\sigma_{m,n}$,  until the Fourier coefficients become of the order of  $\g{2n-1}^{-(\kappa-1)}$ and thus the corresponding sums $\mathcal{E}_{m,k}^\ast(n)$ become of the form $\mathcal{E}_{m,k}^\ast(n)=\OO{\sum_{j=1}^n{\g{2j-1}^{-\kappa}}}$ and consequently, by Condition \ref{c8}, absolutely convergent. This concludes our proof of Theorem \ref{t1}.


\section{Fourier series expansion of $h(x,t)$}\label{fsh}

We now prove Lemma~\ref{main2}.
Note that for all real $x$ such that $|x|<1/4,$  function $h(x, t)$ is real valued and thus ${c}_{-m}(x)=\overline{{c}_m(x)}$ for all $m$. 

\begin{lemma}\label{cor-main-1}  For all real $x$ such that $|x|<1/4$,\\
(i)\ \    ${c}_0(x) =0$;\\
(ii)\ \    for all $m\neq 0$  the coefficients  ${c}_{m}(x)$ are purely imaginary for all even $m$  and real for all odd $m$.
\hspace*{\fill}$\Box$
\end{lemma}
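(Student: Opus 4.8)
The plan is to deduce everything from a single reflection identity in $t$: for $|x|<1/4$,
\[
h(x,\pi-t)=-h(x,t).
\]

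\emph{Step 1 (the reflection identity).} Fix $x$ with $|x|<1/4$; by the remark preceding the statement, $h(x,\cdot)$ is a well-defined real-valued $2\pi$-periodic function. Replace $t$ by $\pi-t$ in the definitions \eqref{fff}, \eqref{ggg}, using $\e^{\pm\ii(\pi-t)}=-\e^{\mp\ii t}$. This substitution interchanges the argument $1-\tfrac{x^2}{2}-\ii x+\tfrac{x^2}{2}\e^{\ii t}$ of the first logarithm with the argument $1-\tfrac{x^2}{2}-\ii x-\tfrac{x^2}{2}\e^{-\ii t}$ of the third, and likewise the argument of the second with that of the fourth. In \eqref{fff} the four logarithms carry signs $+,+,-,-$, so this interchange flips the sign of the whole expression, $f(x,\pi-t)=-f(x,t)$; in \eqref{ggg} the signs are $+,-,+,-$, which the interchange leaves intact, $g(x,\pi-t)=g(x,t)$. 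Dividing gives $h(x,\pi-t)=-h(x,t)$. (Alternatively, in the explicit closed form of $h$ one writes the argument of the numerator logarithm as $(D+N)/D$ with $N=2x^2(1-\tfrac{x^2}{2})\cos t$ and $D=1-x^2\cos t-x^3\sin t+x^4\cos^2\tfrac{t}{2}$; from $\cos^2\tfrac{t}{2}-\cos t=\sin^2\tfrac{t}{2}$ one gets $D+N=1+x^2\cos t-x^3\sin t+x^4\sin^2\tfrac{t}{2}$, and $t\mapsto\pi-t$ simply swaps $D$ and $D+N$, negating the numerator, while the denominator $2\arctan(\cdots)$ depends on $t$ only through $\sin t$ and is unchanged.)

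\emph{Step 2 (from the symmetry to the Fourier coefficients).} In \eqref{fint} substitute $t=\pi-s$; as $t$ ranges over $[-\pi,\pi]$ the variable $s$ ranges over $[0,2\pi]$, and since $h(x,\pi-s)\e^{-\ii m(\pi-s)}$ is $2\pi$-periodic in $s$ the resulting integral equals one over $[-\pi,\pi]$. Using $\e^{-\ii m(\pi-s)}=(-1)^m\e^{\ii m s}$ together with Step 1,
\[
c_m(x)=\frac{(-1)^m}{2\pi}\int_{-\pi}^{\pi}\bigl(-h(x,s)\bigr)\e^{\ii m s}\,\dx s=(-1)^{m+1}c_{-m}(x).
\]
Since $h(x,\cdot)$ is real, $c_{-m}(x)=\overline{c_m(x)}$, so $c_m(x)=(-1)^{m+1}\overline{c_m(x)}$. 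For even $m$ this reads $c_m(x)=-\overline{c_m(x)}$, so $c_m(x)$ is purely imaginary, and for $m=0$ it forces $c_0(x)=0$; for odd $m$ it reads $c_m(x)=\overline{c_m(x)}$, so $c_m(x)$ is real. This is precisely (i) and (ii).

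The only step carrying any real content is the reflection identity; the verification is short in either formulation, with the hypothesis $|x|<1/4$ needed only to keep all the logarithms on their principal branch. I expect the mild bookkeeping of the permutation of the four logarithms (equivalently, the trigonometric identity that swaps $D$ and $D+N$) to be the sole, and minor, obstacle.
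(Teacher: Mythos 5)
Your proof is correct and takes essentially the same route as the paper's: the reflection identities $f(x,\pi-t)=-f(x,t)$ and $g(x,\pi-t)=g(x,t)$ (hence $h(x,\pi-t)=-h(x,t)$), followed by the substitution $u=\pi-t$ in \eqref{fint} and realness of $h$ to obtain $c_m(x)=(-1)^{m+1}\overline{c_m(x)}$, which gives (i) and (ii). Your explicit bookkeeping of how the four logarithms permute (and the alternative check via the closed form of $h$) only spells out what the paper asserts "follows directly" from \eqref{fff} and \eqref{ggg}.
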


\begin{proof}From definitions \eqref{fff} and  \eqref{ggg}  it follows directly that 
\begin{align}
f(x, t)&=-f(x,{\pi}- t);\\ 
g(x, t)&=g(x,{\pi}-t).\label{gpi}
\end{align} 
Thus, $h(x, t)=-h(x,{\pi}-t)$. Since $h(x,t)$ is real,  $c_0=\overline{c_0}$ and using a substitution $u=\pi-t$, \eqref{fint} implies 
\begin{align}
c_{m}(x)&=-\frac{1}{2\pi}\int_{-\pi}^{\pi}{h(x, \pi-t)}\,{\e}^{-\ii m t}\dx t 
= -\frac{1}{2\pi}\int_{0}^{2\pi}{h(x, u)}\,{\e}^{-\ii m (\pi-u)}\dx u\nonumber\\&=(-1)^{m+1}\overline{c_{m}},
\end{align}
which implies both claims of the lemma.
\end{proof}

\begin{lemma}\label{mono}
For every fixed $m\neq 0$, $|{c}_{m}(x)|$ is monotonic in $x$ in a sufficiently small neighbourhood of $0$; thus, for all $m\neq0$, 
$|c_m^j|$ form an almost decreasing sequence with respect to $j$.
\end{lemma}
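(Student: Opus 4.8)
The plan is to treat $c_m(x)$ as an analytic function of the single parameter $x$ near $0$, pin down its order of vanishing and leading Taylor coefficient at $x=0$, conclude that $|c_m(x)|$ is strictly increasing on a one-sided neighbourhood $(0,\delta)$ of $0$, and finally obtain the statement about $|c_m^j|$ by substituting $x=\omega/\g{2j-1}$ and invoking conditions \ref{c2} and \ref{c1}. \emph{Analyticity.} From \eqref{fff}--\eqref{hhh}, $f(x,t)$ and $g(x,t)$ are finite sums of principal-branch logarithms of expressions converging to $1$ as $x\to0$ uniformly in $t$, hence analytic in $x$ in a complex neighbourhood of $0$, uniformly for $t\in[-\pi,\pi]$. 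Differentiating the four logarithms at $x=0$ yields $f'_x(0,t)\equiv0$ and $g'_x(0,t)\equiv4$; thus $g(x,t)/x=4+\OO{x}$ (the expansion already behind \eqref{serG}), which is bounded away from $0$ uniformly in $t$, while $f(x,t)/x=\OO{x}$. Hence $h=(f/x)/(g/x)$ is analytic in $x$ near $0$, uniformly in $t$, with $h(0,t)\equiv0$; consequently $c_m(x)=\frac{1}{2\pi}\int_{-\pi}^{\pi}h(x,t){\e}^{-\ii mt}\,\dx t$ is analytic near $0$ with $c_m(0)=0$, so $c_m(x)=\alpha_m x^{d_m}+\OO{x^{d_m+1}}$ for some integer $d_m\ge1$ and, unless $c_m\equiv0$, some $\alpha_m\neq0$.

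\emph{Leading term.} Expanding the explicit formula for $h(x,t)$ (the one displayed just after Lemma~\ref{F/G}) in powers of $x$ — the same type of computation that produced \eqref{serG} and \eqref{recG} — one gets $h(x,t)=\tfrac12 x\cos t+\tfrac18 x^{2}\sin 2t+\OO{x^{3}}$, and, more generally, that the lowest power of $x$ carrying the frequency ${\e}^{\ii mt}$ is $x^{|m|}$, with a nonzero coefficient $\alpha_m$ (so $\alpha_{\pm1}=\tfrac14$, $\alpha_{2}=-\tfrac{\ii}{16}$, $\alpha_{-2}=\tfrac{\ii}{16}$, and in general $|\alpha_m|$ is a positive dyadic rational; this is consistent with — and sharpens — Lemma~\ref{main2}(v), and respects the parities in Lemma~\ref{cor-main-1}). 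Thus $c_m(x)=\alpha_m x^{|m|}\bigl(1+\OO{x}\bigr)$ with $\alpha_m\neq0$, so $|c_m(x)|^{2}=|\alpha_m|^{2} x^{2|m|}\bigl(1+\OO{x}\bigr)$ is a real-analytic function of $x$ with a zero of order exactly $2|m|$ at $0$ and positive leading coefficient; hence $|c_m(x)|^{2}$, and therefore $|c_m(x)|$, is strictly increasing on some interval $(0,\delta)$ (for $m<0$ note $|c_m(x)|=|c_{-m}(x)|$). I expect the nonvanishing of $\alpha_m$ to be the only step that is not soft, and — like the other asymptotic identities in the paper — the one most naturally double-checked symbolically; for the statement as written one in fact needs only $c_m\not\equiv0$ near $0$ (if $c_m\equiv0$, then $|c_m|\equiv0$ is trivially monotone), which the explicit leading term secures.

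\emph{From $|c_m(x)|$ to $|c_m^j|$.} Recall $c_m^j=c_m(\omega/\g{2j-1})$. By \ref{c2}, $\g{2j-1}\to\infty$, so there is $j^{\ast}$ with $\omega/\g{2j-1}\in(0,\delta)$ for all $j\ge j^{\ast}$. By \ref{c1} there are $n_0,m_0$ with $\g{n+\mu}>\g{n}$ whenever $n\ge n_0$ and $\mu\ge m_0$; hence for $j\ge\max\{j^{\ast},n_0\}$ and $k\ge m_0$ we get $\g{2(j+k)-1}=\g{(2j-1)+2k}>\g{2j-1}$, so $\omega/\g{2(j+k)-1}<\omega/\g{2j-1}$ with both arguments inside the interval $(0,\delta)$ on which $|c_m|$ is strictly increasing, and therefore $|c_m^{j+k}|<|c_m^{j}|$. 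This is precisely the assertion that $(|c_m^{j}|)_{j}$ is an almost decreasing sequence, and the proof is complete.
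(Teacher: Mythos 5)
Your route is essentially the paper's: analyticity of $c_m$ in the parameter near $0$ (the paper obtains this from Fubini, Cauchy and Morera on the disc $|z|\leq 1/4$), a non-degeneracy statement about $c_m(x)$ as $x\to 0$, and then the passage from monotonicity of $|c_m(x)|$ on $(0,\delta)$ to the almost decreasing property of $|c_m^j|$ via \ref{c2} and \ref{c1}. Your final step is carried out correctly and in more detail than the paper's terse ``thus'', and your soft argument (a nonzero real-analytic function has isolated zeros; the direction of monotonicity is forced by $c_m(0)=0$) is a legitimate variant of the paper's sign-change argument for $c_m^\prime$.

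The gap is the non-degeneracy input, which you assert rather than prove. Everything rests on the claim that for every fixed $m\neq 0$ the frequency-$m$ Fourier coefficient of $h(x,\cdot)$ is not identically zero near $x=0$ (and, in your leading-term version, that it equals $\alpha_m x^{|m|}(1+\OO{x})$ with $\alpha_m\neq 0$). Your justification is a low-order expansion of $h$, which covers only $|m|\leq 2$; a fixed finite-order Taylor expansion ``of the same type as \eqref{serG}'' cannot certify the claim for all $m$, and nothing soft excludes cancellation in the order-$x^{|m|}$ coefficient: in \eqref{fff}--\eqref{ggg} every occurrence of ${\e}^{\pm\ii t}$ is accompanied by a factor $x^2/2$, and the frequency-$m$ content at order $x^{|m|}$ emerges only after dividing by $g\approx 4x$, as a sum of many cross terms. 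Showing that this sum equals $(\mp\ii)^{m-1}(x/4)^{|m|}+\OO{x^{|m|+2}}$ is precisely the content of \eqref{cf3}, which the paper proves by deforming the contour from $|z|=1$ to $|z|=x^2$ and computing the residue at the pole $pl_1(x)$; that computation occupies the remainder of Section~\ref{fsh} and is the substantive part of the argument (the paper's own proof of the lemma also leans on \eqref{cf3}, but then proves it). Your fallback --- that only $c_m\not\equiv 0$ is needed --- does not close the gap, since in your write-up that weaker statement is itself secured only by the same unproven leading-term claim, and for the ``almost decreasing'' conclusion (as opposed to bare monotonicity) you do need $c_m(x)\neq 0$ for small $x>0$. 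So, as written, the proposal is incomplete exactly where the paper's proof does its real work.
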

\begin{proof}To prove the above Lemma we consider function 
\[c_m(z)=\frac{1}{2\pi}\int_{-\pi}^{\pi}{\e}^{-\ii m t}{h(z,t)}\dx t\] 
on the compact set $U=\{(z,t)\,:\,|z|\leq 1/4,\,|t|\leq \pi\}\subset \Cset\times\Rset$, with $h(z,t)$ in the form given by \eqref{hhh}, together with definitions \eqref{fff} and \eqref{ggg}.  For every fixed $t$ function  ${\e}^{-\ii m t}h(z,t)$  is analytic on the disc $|z|\leq 1/4$ in the complex plane. Thus, for every closed contour $C\subset U$,  Fubini's and Cauchy's theorems imply
\[\oint_{C}\int_{-\pi}^{\pi}{\e}^{-\ii m t}{h(z,t)}\dx t\dx z=\int_{-\pi}^{\pi}\oint_{C}{\e}^{-\ii m t}{h(z,t)}\dx z\dx t = 0.\]
Consequently, by Morera's theorem, function $c_m(z)$ is analytic on the disc $|z|\leq 1/4$. Note that for real $x$ such that \ $|x|<1/4$ the values of $\ii^{m-1}c_m(x)$ and $\ii^{m-1}c_m^\prime(x)=\frac{d}{d x}(\ii^{m-1} c_m(x))$ are real, and if there were no neighbourhood of $0$ in which $\ii^{m-1}c_m(x)$ is monotonic, $c_m^\prime(x)$ would change its sign infinitely many times in every neighbourhood of $0$ and thus also have infinitely many zeros in the set $\{z\,:\, |z|\leq 1/4\}$. Since $c_m^\prime(z)$ is also analytic on that set, this would imply that there $c_m^\prime(z)$ is identically equal to 0. However, as we will see, equation \eqref{cf3} shows that this is impossible. 
\end{proof}
We now want to establish the asymptotic behaviour of $c_m(x)$ as $x\rightarrow 0$.  
If $x$ is real and $|x|<1/4$  we can combine the first logarithm in \eqref{fff} and \eqref{ggg} with the fourth and the second with the third and  obtain
\begin{align}
f(x,t)&=\ln\frac{1-\frac{x^2}{2}+\ii x +\frac{x^2}{2}{\e}^{-\ii t}}
{1-\frac{x^2}{2}-\ii x -\frac{x^2}{2}{\e}^{-\ii t}}+
\ln\frac{1-\frac{x^2}{2}-\ii x +\frac{x^2}{2}{\e}^{\ii t}}
{1-\frac{x^2}{2}+\ii x -\frac{x^2}{2}{\e}^{\ii t}};\label{fs}\\
g(x,t)&=\ii\left(-\ln\frac{1-\frac{x^2}{2}+\ii x +\frac{x^2}{2}{\e}^{-\ii t}}
{1-\frac{x^2}{2}-\ii x -\frac{x^2}{2}{\e}^{-\ii t}}+
\ln\frac{1-\frac{x^2}{2}-\ii x +\frac{x^2}{2}{\e}^{\ii t}}
{1-\frac{x^2}{2}+\ii x -\frac{x^2}{2}{\e}^{\ii t}}\right).\label{gs}
\end{align}
Note that we can multiply by ${\e}^{\ii t}$ both the denominator and the numerator of the fraction in the first of the two logarithms appearing in \eqref{fs} and \eqref{gs} thereby eliminating ${\e}^{-\ii t}$. Thus, if we define
\begin{align}
f^{\ast}(x,z)&=\ln\frac{\left(1-\frac{x^2}{2}+\ii x \right)z+\frac{x^2}{2}}
{\left(1-\frac{x^2}{2}-\ii x \right)z-\frac{x^2}{2}}+
\ln\frac{1-\frac{x^2}{2}-\ii x +\frac{x^2}{2}z}
{1-\frac{x^2}{2}+\ii x -\frac{x^2}{2}z};\\
g^{\ast}(x,z)&=-\ii\ln\frac{\left(1-\frac{x^2}{2}+\ii x\right)z +\frac{x^2}{2}}
{\left(1-\frac{x^2}{2}-\ii x\right)z -\frac{x^2}{2}}+
\ii\ln\frac{1-\frac{x^2}{2}-\ii x +\frac{x^2}{2}z}
{1-\frac{x^2}{2}+\ii x -\frac{x^2}{2}z};\\
h^{\ast}(x,z)&=\frac{f^{\ast}(x,z)}{g^{\ast}(x,z)},
\end{align}
then  for $|x|<1/4$ and all $t$ we have $h(x,t)={h^{\ast}\!\!\left(x,{\e}^{\ii t}\right)}$ and
\begin{align*}
c_m(x)&=\frac{1}{2\pi}\int_{-\pi}^{\pi}{h(x,t)}\,{\e}^{-\ii m t}\dx t=\frac{1}{2\pi}\oint_{|z|=1}{h^{\ast}(x,z)}z^{-m}\frac{\dx z}{\ii z}\\&=\frac{1}{2\pi\ii}\oint_{|z|=1}{  z^{-m-1}h^{\ast}(x,z)}\dx z.
\end{align*}

We assume that $x$ is a fixed parameter such that $|x|<1/4$ and look for the singularities of $z^{-m-1}h^{\ast}(x,z)$.\footnote{\emph{Mathematica} calculations of  cuts, poles and the residue at the pole of $z^{-m-1}h^{\ast}(x,z)$ are in file \texttt{OP/symbolic/11\_cuts\_and\_poles.nb.}}
Considering  the logarithms appearing in $f^{\ast}(x,z)$ and $g^{\ast}(x,z)$ and letting
\begin{align*}
&w_1=-\frac{x^2}{2\left(1+\frac{x^4}{4}\right)}\left(1-\frac{x^2}{2}-\ii x\right);&&
w_2=\frac{x^2}{2\left(1+\frac{x^4}{4}\right)}\left(1-\frac{x^2}{2}+\ii x\right),&
\end{align*}
we obtain for the numerator of the fraction inside the first logarithm
\begin{align}
&\left(1-\frac{{x}^2}{2}+\ii{x}\right)z+\frac{{x}^2}{2}=0 && \text{if and only if}\ \ z=w_1,&
\end{align} 
while setting its denominator to zero produces
\begin{align}
&\left(1-\frac{{x}^2}{2}-\ii x\right)z-\frac{{x}^2}{2}=0 && \text{if and only if}\ \ z=w_2.&
\end{align} 
Let $z$ be an arbitrary complex number such that $z\neq w_1$ and $z\neq w_2$, and let $a=\Re{(z)}, b=\Im{(z)}$; then\begin{align}\label{long1}
&\frac{\left(1-\frac{x^2}{2}+\ii x \right)(a+\ii b)+\frac{x^2}{2}}
{\left(1-\frac{x^2}{2}-\ii x \right)(a+\ii b)-\frac{x^2}{2}}\nonumber\\
&=\frac{\left({\left(1-\frac{x^2}{2}+\ii x \right)(a+\ii b)+\frac{x^2}{2}}\right)
\left({\left(1-\frac{x^2}{2}-\ii x \right)(a-\ii b)-\frac{x^2}{2}}\right)}{\left| \left(1-\frac{x^2}{2}-\ii x \right)(a+\ii b)-\frac{x^2}{2} \right|^2}\nonumber\\
&=\frac{(a^2+b^2)(1-2x^2)+bx^3+\frac{1}{4}(a^2+b^2-1)x^4+\ii\,x\left(a^2+b^2-\frac{b}{2}x\right)(2-x^2)}{\left| \left(1-\frac{x^2}{2}-\ii x \right)(a+\ii b)-\frac{x^2}{2} \right|^2}.
\end{align}
Thus, for $0<x<1/4$ the imaginary part of this fraction is zero just in case $a^2+b^2-\frac{b}{2}x=0$, i.e. for $b=\frac{1}{4}\left(x+\sqrt{x^2-16a^2}\right)$ or $b=\frac{1}{4}\left(x-\sqrt{x^2-16a^2}\right)$. If $0<x<1/4$ and $b=\frac{1}{4}\left(x+\sqrt{x^2-16a^2}\right)$ then the real part of the numerator in the last fraction of \eqref{long1} is equal to
\[\frac{1}{32}\left((x^2(4-8x^2+x^4)+x(4+x^4)\sqrt{x^2-16a^2}\right)\]
which is  positive for all $a$ for which the value of this expression is real, i.e., for $|a|<\frac{|x|}{4}$.  If $0<x<1/4$ and $b=\frac{1}{4}\left(x-\sqrt{x^2-16a^2}\right)$  then the real part of the numerator in the last fraction of \eqref{long1} is equal to
\[\frac{1}{32}\left((x^2(4-8x^2+x^4)-x(4+x^4)\sqrt{x^2-16a^2}\right)\]
which is negative for $a$ such that $\Re(w_1)<a<\Re(w_2)$. Thus, we make a cut in the complex plane which is an arc $z(a)=a+\ii \frac{1}{4}\left(x-\sqrt{x^2-16a^2}\right)$ with end points $w_1$ and $w_2$, passing through the origin; see Figure~\ref{contour}. Thus, the origin is not an isolated pole of $z^{-m-1}h^{\ast}(x,z)$ for $m>0$. Note that for $z$ lying on such an arc we have
\begin{equation}\label{normw}
|z|^2=a^2+\left(\frac{1}{4}(x-\sqrt{x^2-16a^2})\right)^2=\frac{1}{8}x\left(x-\sqrt{x^2-16a^2}\right),
\end{equation}
which increases monotonically in $a^2$ and thus attains a maximum for $a^2=\Re(w_1)^2=\Re(w_2)^2$, 
which for $0<x<1/4$ gives $ |z|^2=x^4/(4+x^4)<x^4/4$. Thus, the entire arc is contained inside a disc $\{z\,:\,|z|\leq x^2/2\}$. Such containment of the cut is yet another benefit of  pairing $\mult{n}$ with $\mult{n-1}$ and $\del{n}$ with $\del{n-1}$. \\

For the fraction inside the second logarithm appearing in $f^\ast(x,z)$ and $g^\ast(x,z)$ we obtain for its numerator and denominator, respectively
\begin{align}
&{1-\frac{x^2}{2}-\ii x +\frac{x^2}{2}z}=0 && \text{if and only if}\ \ z=v_1,\ \ \ \text{for} && v_1=1-\frac{2}{x^2}+\ii \frac{2}{x};&\\
&{1-\frac{x^2}{2}+\ii x -\frac{x^2}{2}z}=0 &&\text{if and only if}\ \ z=v_2,\ \ \ \text{for} && v_2=-1+\frac{2}{x^2}+\ii \frac{2}{x}.&
\end{align} 
Representing again $z$ as $z=a+\ii b$, we obtain that 
\begin{align}
&\frac{1-\frac{x^2}{2}-\ii x+\frac{x^2}{2}(a+\ii b)}
{1-\frac{x^2}{2}+\ii x-\frac{x^2}{2} (a+\ii b)}\nonumber\\
&\hspace*{5mm}=\frac{
\left(1-\frac{x^2}{2}-\ii x+\frac{x^2}{2}(a+\ii b)\right)
\left(1-\frac{x^2}{2}-\ii x-\frac{x^2}{2} (a-\ii b)\right)}{\left| 1-\frac{x^2}{2}+\ii x-\frac{x^2}{2} (a+\ii b) \right|^2}\nonumber\\
&\hspace*{5mm}=\frac{1-2x^2+bx^3-\frac{1}{4}(a^2+b^2-1)x^4+\ii\,x\left(\frac{b}{2}x-1\right)(2-x^2)}{\left| 1-\frac{x^2}{2}+\ii x-\frac{x^2}{2} (a+\ii b) \right|^2}.
\end{align}
Thus, the imaginary part of the above fraction is equal to zero just in case $b=2/x$, and for such $b$ the real part of the fraction is equal to
\[1-x^2+\frac{1}{4}(1-a^2)x^4\]
which is negative if either $a<\Re(v_2)$ or $a>\Re(v_1)$. Consequently, we make two cuts in the complex plane which are horizontal half lines  passing through $v_1$ and $v_2$ respectively; see Figure~\ref{contour}.
\begin{figure}[t]
   \centering
   \includegraphics[width=4.5in]{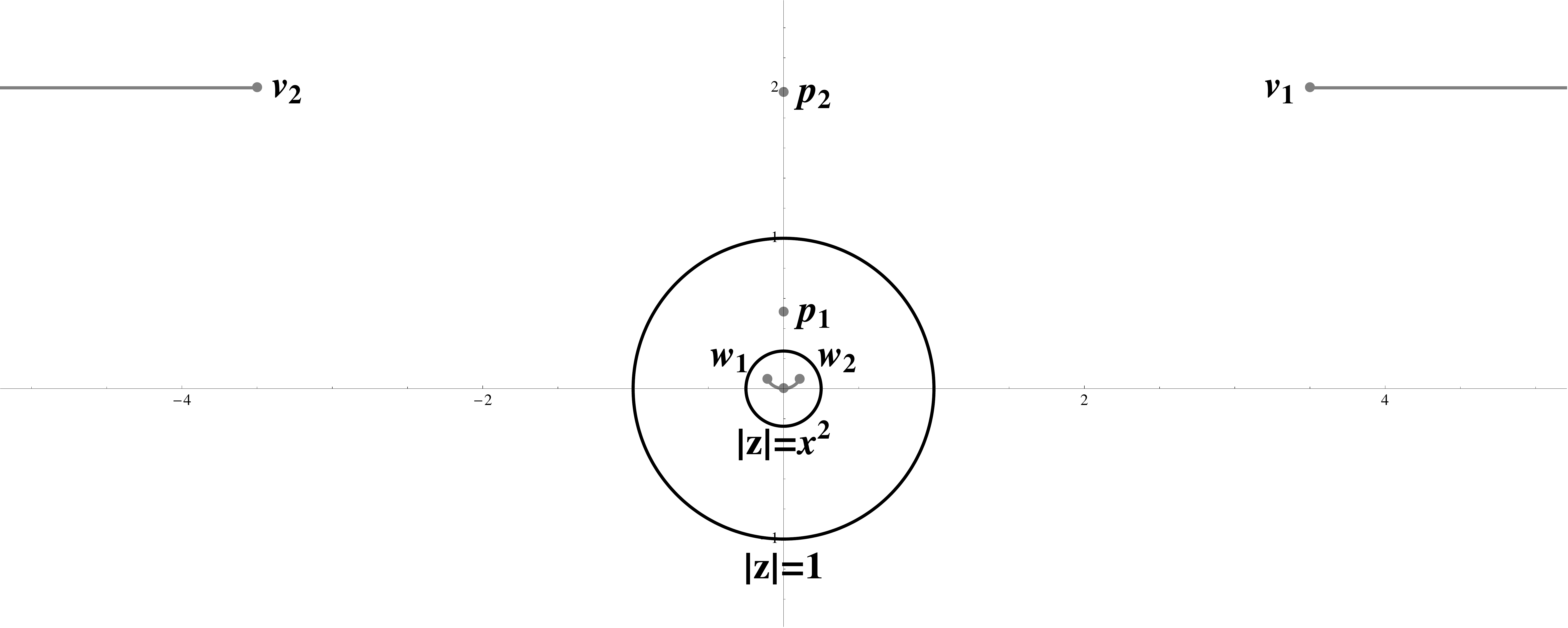}
   \caption{Integration contours with cuts and poles}
   \label{contour}
\end{figure}

The last remaining possible singularity can only occur when $g^\ast(x,z)=0$. 
Let us set 
\begin{align}
\tilde{g}(x,z)=\ii\ln\left(\frac{\left(1-\frac{x^2}{2}-\ii x\right)z -\frac{x^2}{2}}{\left(1-\frac{x^2}{2}+\ii x\right)z +\frac{x^2}{2}}\ \frac{1-\frac{x^2}{2}-\ii x +\frac{x^2}{2}z}
{1-\frac{x^2}{2}+\ii x -\frac{x^2}{2}z}\right);
\end{align}
then $g^\ast(x,z)=\tilde{g}(x,z)+2k\pi$ for some $k$ such that $|k|\leq 1$. If $g^\ast(x,z)=0$ for some $z$, then $\tilde{g}(x,z)+2k\pi=0$. However, since $|\tilde{g}(x,z)|\leq \pi$ we get that $k=0$ and  
$\tilde{g}(x,z)=0$. On the other hand, if $\tilde{g}(x,z)=0$, then  $g^\ast(x,z)=2k\pi$. Since the absolute value of the imaginary part of the first logarithm in $g^\ast(x,z)$ is smaller or equal to $\pi$ and the absolute value of the imaginary part of the second logarithm is strictly smaller than $\pi$ whenever 
$0<x<1/4$ and $|z|\leq 1$, we obtain that for such values of $x$ and $z$ we again have $k=0$ and $g^\ast(x,z)=0$. Thus, for  $0<x<1/4$ and $|z|\leq 1$ we have $g^\ast(x,z)=0$ if and only if $\tilde{g}(x,z)=0$, i.e., if and only if 
\begin{align}
\frac{\left(1-\frac{x^2}{2}-\ii x\right)z -\frac{x^2}{2}}{\left(1-\frac{x^2}{2}+\ii x\right)z +\frac{x^2}{2}}\ \frac{1-\frac{x^2}{2}-\ii x +\frac{x^2}{2}z}
{1-\frac{x^2}{2}+\ii x -\frac{x^2}{2}z}=1.
\end{align}
The solutions to this equation are
\begin{align}
&pl_1(x)=\ii \frac{x}{2\left(1+\sqrt{1-\frac{x^2}{4}}\right)}; &&pl_2(x)=\ii \frac{2\left(1+\sqrt{1-\frac{x^2}{4}}\right)}{x}.&
\end{align}
For $|x|<1/4$ pole $pl_2$ lies outside the unit disc $\{z\,:\, |z|\leq1\}$, while pole $pl_1$ lies inside the unit disc  but outside the disc $\{z\,:\, |z|\leq x^2\}$. Thus, we have
\begin{equation}\label{cf1}
\oint_{|z|=1}z^{-m-1}h^{\ast}(x,z)\dx z=\oint_{|z|=x^2}  z^{-m-1}h^{\ast}(x,z)\dx z+2\pi\ii \res{z^{-m-1}h^{\ast}(x,z)}{ pl_1}.
\end{equation}
Equations \eqref{fint} and \eqref{cf1} yield
\begin{align}\label{cf2}
c_m(x)&=\frac{1}{2\pi\ii}\oint_{|z|=x^2}  z^{-m-1}h^{\ast}(x,z)\dx z+\res{z^{-m-1}h^{\ast}(x,z)}{ pl_1}\\
&=\frac{1}{2\pi}\int_{-\pi}^{\pi}  x^{-2m}{\e}^{-\ii m t}h^{\ast}(x,x^{2}{\e}^{\ii t})\dx t+\res{z^{-m-1}h^{\ast}(x,z)}{ pl_1(x)}.\label{cf22}
\end{align}
Let us set $g_z^\prime(x,z)={\partial}g^\ast(x,z)/{\partial z}$; then direct calculations show that\footnote{These calculations as well as the residue evaluation  are also contained in file \texttt{OP/symbolic/11\_cuts\_and\_poles.nb.}}
\begin{align}
f^\ast(x,pl_1(x))&=2\ln\left(1-2x^2+\frac{x^4}{2}-2\ii x\sqrt{1-\frac{x^2}{4}}\left(1-\frac{x^2}{2}\right)\right);\label{ff}\\
g_z^\prime(x,pl_1(x))&=-8\ii\left(1-\frac{x^2}{2}\right)\sqrt{1-\frac{x^2}{4}}\left(1+\sqrt{1-\frac{x^2}{4}}\right),\label{dg}
\end{align}
and we obtain that for all $x$ such that $|x|<1/4$,
\begin{align*}
\res{z^{-m-1}h^\ast(x,z)}{ pl_1(x)}&=pl_1(x)^{-m-1}\frac{f^\ast(x,pl_1(x))}{g_z^\prime(x,pl_1(x))}.
\end{align*}
One can verify that the real part of the logarithm in \eqref{ff} is zero; thus, after some simplification, we obtain
\begin{align}\label{res1}
\res{z^{-m-1}h(x,z)}{ pl_1(x)}
&=\frac{{\ii}^{-m-1}x^{-m-1}}{2^{-m+1}\left(1+\sqrt{1-\frac{x^2}{4}}\right)^{-m}}\;\frac{\arctan\frac{2x\sqrt{1-\frac{x^2}{4}}
\left(1-\frac{x^2}{2}\right)}{1-2x^2+\frac{x^4}{4}}}
{\left(1-\frac{x^2}{2}\right)\sqrt{1-\frac{x^2}{4}}}\nonumber\\
&={\ii}^{-m-1}\left(\frac{x}{4}\right)^{-m}+\OO{x^{-m+2}}.
\end{align}
Since $h^\ast(x,x^{2}{\e}^{\ii t})$ is continuous for $(x,t)\in [0,1/4]\times [-\pi,\pi]$, for all $m\leq -1$ we have
\begin{align}\label{int1}
\left|\frac{1}{2\pi}\int_{-\pi}^{\pi}  x^{-2m}{\e}^{-\ii m t}h^\ast(x,x^{2}{\e}^{\ii t})\dx t\right|&<\frac{ x^{-2m}}{2\pi}\int_{-\pi}^{\pi} \left|h^\ast(x,x^{2}{\e}^{\ii t})\right|\dx t =\OO{x^{-2m}}.
\end{align}
Consequently, from \eqref{cf22}, \eqref{res1} and \eqref{int1} we obtain that for $m\geq1$
\begin{align}\label{cf3}\begin{split}
c_{-m}(x)&={\ii}^{m-1}\left(\frac{x}{4}\right)^{m}+\OO{x^{m+2}};\\
c_m(x)&=\overline{c_{-m}(x)}={(-\ii)}^{m-1}\left(\frac{x}{4}\right)^{m}+\OO{x^{m+2}}.
\end{split}
\end{align}
This, together with Lemma \ref{cor-main-1}, proves $(ii),(iii)$ and $(iv)$ of Lemma \ref{main2}.


\section{Fourier series of $\varepsilon_{m}(x,t)$}\label{fse}

We now prove Lemma~\ref{key2}.  Note that for $0<x<1/4$ equation \eqref{ggg} implies that $\overline{g(x,t)}=g(x,t)$; this and \eqref{conjugl} imply that 
for all $m\neq 0$, all $k$ and all sufficiently large $n$, 
\begin{equation}\label{mnk}
f^{m,n}_{k}=\overline{f^{-m,n}_{-k}}.
\end{equation}
\begin{lemma}\label{realb}
For all $m\neq 0$, all $k$ and all sufficiently large $n$, the coefficients $f_k^{m,n}$ are real if $m$ and $k$ are of the same parity and purely imaginary otherwise.
\end{lemma}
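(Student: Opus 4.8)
The plan is to obtain the parity dichotomy from a single change of variable $t\mapsto\pi-t$ in the Fourier integral \eqref{cff}, in the same spirit as the proof of Lemma~\ref{cor-main-1} but using the symmetry \eqref{pimin} for $l$ in place of the symmetry for $f$ used there.

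First I would fix $n$ large enough that $x:=\omega/\g{2n-1}$ lies in $(0,1/4)$. Then, as noted just before the statement, $g(x,t)$ is real-valued; moreover $g(x,t)\neq 0$ for every real $t$ (the zeros of $g^{\ast}(x,\cdot)$ inside and on the unit circle, computed in Section~\ref{fsh}, are $pl_1(x)$ and $pl_2(x)$, both off $|z|=1$ when $0<x<1/4$), so the integrand of \eqref{cff} is continuous and $f_k^{m,n}$ is well defined. In
\[
f_k^{m,n}=\frac{1}{2\pi}\int_{-\pi}^{\pi}\left(\frac{l\!\left(m,x,t\right)}{m\,g(x,t)}-1\right){\e}^{\ii(m-k)t}\dx t
\]
I would substitute $t=\pi-u$; since the integrand is $2\pi$-periodic the interval of integration may be shifted back to $[-\pi,\pi]$. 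By \eqref{pimin} and \eqref{gpi} we have $l(m,x,\pi-u)=\overline{l(m,x,u)}$ and $g(x,\pi-u)=g(x,u)$, and since $g(x,u)$ and $m$ are real,
\[
\frac{l\!\left(m,x,\pi-u\right)}{m\,g(x,\pi-u)}-1=\overline{\frac{l\!\left(m,x,u\right)}{m\,g(x,u)}-1}.
\]
Since also ${\e}^{\ii(m-k)(\pi-u)}=(-1)^{m-k}{\e}^{-\ii(m-k)u}$ and ${\e}^{-\ii(m-k)u}$ is the complex conjugate of ${\e}^{\ii(m-k)u}$, the whole integrand turns into $(-1)^{m-k}$ times the complex conjugate of the original integrand, whence
\[
f_k^{m,n}=(-1)^{m-k}\,\overline{f_k^{m,n}}.
\]

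Reading this off finishes the proof: if $m$ and $k$ have the same parity then $m-k$ is even and $f_k^{m,n}=\overline{f_k^{m,n}}$, so $f_k^{m,n}\in\Rset$; if they have opposite parity then $f_k^{m,n}=-\overline{f_k^{m,n}}$, so $f_k^{m,n}$ is purely imaginary. The only points requiring any care are the legitimacy of the periodic shift of the integration interval and the non-vanishing of $g(x,\cdot)$ on the unit circle for $0<x<1/4$; both are immediate from material already established, so I do not expect a genuine obstacle here — the argument is essentially this one substitution combined with the symmetries \eqref{pimin} and \eqref{gpi} and the reality of $g$.
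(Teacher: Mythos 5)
Your proposal is correct and is essentially the paper's own proof: the same substitution $t\mapsto\pi-u$ combined with the symmetries \eqref{pimin} and \eqref{gpi} and the reality of $g(x,\cdot)$, yielding $f_k^{m,n}=(-1)^{m-k}\overline{f_k^{m,n}}$ and hence the parity dichotomy. Your extra remark on the non-vanishing of $g(x,\cdot)$ (via the location of $pl_1$, $pl_2$) is a harmless bit of added care that the paper leaves implicit.
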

\begin{proof}Using \eqref{pimin}, \eqref{gpi}  and a substitution $u=\pi-t$, we obtain
\begin{align*}
f_{k}^{m}(x)&=\frac{1}{2\pi}\int_{-\pi}^{\pi}\left(\frac{l(m, x,t)}{m\,g(x,t)}-1\right)\,{\e}^{\ii (m-k) t}\dx t \\
&=\frac{1}{2\pi}\int_{-\pi}^{\pi}\left(\frac{\overline{l(m, x, \pi-t)}}{m\,g(x,\pi-t)}-1\right)\,{\e}^{\ii (m-k) t}\dx t \\
&= \frac{1}{2\pi}\int_{0}^{2\pi}\left(\frac{\overline{l(m, x, u)}}{m\,g(x,u)}-1\right)\,{\e}^{\ii (m-k) (\pi-u)}\dx u\\
&=(-1)^{m-k}\;\overline{f_{k}^{m}(x)}.
\end{align*}
\end{proof}

\begin{lemma}
For all $m\neq 0$, all $k$ and all sufficiently large $n$, the absolute values $|f_k^{\,m,n}|$ of the coefficients 
$f_k^{\,m,n}$ form an almost decreasing sequence.
\end{lemma}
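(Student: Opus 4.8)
The plan is to repeat, for the coefficient functions $f_k^m$, the complex-analytic argument used for the $c_m$ in Lemma~\ref{mono}. We may assume $f_k^m$ is not identically zero, since otherwise $|f_k^{m,n}|\equiv0$ and there is nothing to prove. First I would regard
\[
f_k^m(z)=\frac{1}{2\pi}\int_{-\pi}^{\pi}\left(\frac{l(m,z,t)}{m\,g(z,t)}-1\right){\e}^{\ii(m-k)t}\,\dx t
\]
as a function of a complex variable $z$ on a small disc $\{|z|\le\rho\}$. The preliminary point is that the integrand is analytic in $z$ there for each fixed real $t$: the arguments of the four logarithms in $l(m,z,t)$ and $g(z,t)$ are all of the form $1+\OO{z}$ uniformly in $t$, so for $\rho$ small they stay off the branch cut; moreover $g(z,t)=z\bigl(4+\OO{z}\bigr)$ (the analytic avatar of \eqref{serG}), so $g(z,t)$ vanishes exactly to first order at $z=0$ and is nonzero on the punctured disc, while $l(m,z,t)$ also vanishes at $z=0$, so the ratio $l(m,z,t)/g(z,t)$ extends holomorphically across $z=0$. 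A compactness argument on $\{|z|\le\rho\}\times\{|t|\le\pi\}$ together with Fubini's and Cauchy's theorems then gives $\oint_C f_k^m(z)\,\dx z=0$ for every closed contour $C$ in the disc, so by Morera's theorem $f_k^m(z)$, and hence $(f_k^m)'(z)$, is analytic on $\{|z|\le\rho\}$. Since $l(m,x,t)/(m\,g(x,t))\to1$ as $x\to0$ (this is why $\varepsilon_m$ carries the summand $-1$; compare Lemma~\ref{LG} and \eqref{ER}), we also have $\varepsilon_m(0,t)\equiv0$, hence $f_k^m(0)=0$.

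By Lemma~\ref{realb} the function $x\mapsto\ii^{\,m-k}f_k^m(x)$ is real-valued for real $x$, and it is real-analytic near $0$ by the previous paragraph. If it were not monotone on any interval $(0,\delta)$, its derivative would change sign infinitely often as $x\to0^{+}$, so $(f_k^m)'$ would have zeros accumulating at $0$; being analytic on the connected disc $\{|z|\le\rho\}$ this would force $(f_k^m)'\equiv0$, hence $f_k^m$ constant, hence (as $f_k^m(0)=0$) identically zero, contrary to our assumption. Therefore $\ii^{\,m-k}f_k^m$ is monotone on some $(0,\delta)$; being real-analytic and non-constant it is strictly monotone there, and since it vanishes at $0$, the function $|f_k^m|=|\ii^{\,m-k}f_k^m|$ is strictly increasing on $(0,\delta)$. (Alternatively one may invoke the explicit leading term of $f_k^m(x)$ as $x\to0$, the analogue of \eqref{cf3} obtained by the residue computation of this section, which shows directly that $f_k^m(x)$ is a nonzero constant times a positive power of $x$.)

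By condition \ref{c1} the $\g{n}$ form an almost increasing sequence, so $\omega/\g{2n-1}$ is almost decreasing and eventually lies in $(0,\delta)$; composing the almost decreasing sequence $\omega/\g{2n-1}$ with the strictly increasing function $|f_k^m|$ shows that $|f_k^{m,n}|=|f_k^m(\omega/\g{2n-1})|$ is an almost decreasing sequence, which is the assertion.

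The main obstacle is the bookkeeping of the first paragraph: one has to verify that for $z$ in a genuine complex neighbourhood of $0$ and $t$ real none of the four logarithms in $l$ and $g$ meets its branch cut, and that $g(z,t)$ has no zero other than the removable one at $z=0$. This is the same kind of analysis as the cut-and-pole discussion of Section~\ref{fsh}, but easier here because only a small disc around $z=0$ is involved; once it is in place, the Morera/identity-theorem argument and the reduction to a strictly increasing scalar function go through exactly as they did for $c_m$.
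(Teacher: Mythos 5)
Your proposal is correct and follows essentially the paper's own route: the paper proves this lemma by simply repeating for $f_k^m(x)$ the analyticity/Morera/identity-theorem monotonicity argument of Lemma~\ref{mono} and then composing with the almost decreasing sequence $\omega/\g{2n-1}$, which is exactly what you do (your extra care about the removable singularity at $z=0$ and the trivial identically-zero case is a reasonable filling-in of details the paper leaves implicit).
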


\begin{proof} The coefficients $f_k^m(x)$ can be proven to be monotonic in $x$ in a sufficiently small neighbourhood of 0 in the same way as this was done for $c_m(x)$. Thus, since $\g{2n-1}$ are almost increasing, $f_k^{\,m,n}$ form an almost decreasing sequence.
\end{proof}

\begin{lemma}\label{lm26}
For all $m\neq 0$, all $k$ and all sufficiently large $n$,
\begin{align}\label{bk2}
&f_k^{\,m,n}=\OO{{\g{2n-1}^{-2}}}.
\end{align}
\end{lemma}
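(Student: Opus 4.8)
The plan is to reduce the lemma to a single uniform estimate on the model functions $l(m,x,t)$ and $g(x,t)$, namely that
\[
\frac{l(m,x,t)}{m\,g(x,t)}-1=\OO{x^2}\qquad\text{uniformly in }t\in[-\pi,\pi].
\]
Granting this, the Fourier coefficient $f^m_k(x)=\frac{1}{2\pi}\int_{-\pi}^{\pi}\bigl(\frac{l(m,x,t)}{m\,g(x,t)}-1\bigr){\e}^{\ii(m-k)t}\dx t$ is itself $\OO{x^2}$, and substituting $x=\omega/\g{2n-1}$ — legitimate for all $n$ large enough that $\omega/\g{2n-1}<1/4$ — yields $f^{m,n}_k=\OO{(\omega/\g{2n-1})^2}=\OO{\g{2n-1}^{-2}}$, the implied constant depending only on the fixed data $m,k,\omega$.

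To establish the uniform estimate I would first put the model functions in polar form. For $|x|<1/4$ one has, directly from the definition of $l(m,x,t)$ and from the form \eqref{gs} of $g(x,t)$, that $l(m,x,t)=\ii({\e}^{2\ii m\alpha}-{\e}^{2\ii m\beta})$ and $g(x,t)=2(\beta-\alpha)$, where $\alpha=\arg A$, $\beta=\arg B$, $A=A(x,t)=1-\frac{x^2}{2}-\ii x-\frac{x^2}{2}{\e}^{-\ii t}$, $B=B(x,t)=1-\frac{x^2}{2}+\ii x+\frac{x^2}{2}{\e}^{-\ii t}$ (the fractions occurring in those definitions being $A/\overline A$, $B/\overline B$, $B/A$ and $\overline B/\overline A$). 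Both $\alpha(x,t)$ and $\beta(x,t)$ are real-analytic in $x$ near $0$, uniformly in $t$, with $\alpha(0,t)=\beta(0,t)=0$; and since $A(0,t)=B(0,t)=1$, $\partial_xA(0,t)=-\ii$, $\partial_xB(0,t)=\ii$, one gets $\partial_x\alpha(0,t)=\Im\bigl(\partial_xA(0,t)/A(0,t)\bigr)=-1$ and likewise $\partial_x\beta(0,t)=1$.

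The crux is the resulting second-order cancellation. Set $u(\theta)=\ii\,{\e}^{2\ii m\theta}+2m\theta$, so that $l(m,x,t)-m\,g(x,t)=u(\alpha)-u(\beta)$. Since $u'(\theta)=2m\bigl(1-{\e}^{2\ii m\theta}\bigr)$ vanishes at $\theta=0$, the Taylor expansion in $x$ of $u(\alpha(x,t))-u(\beta(x,t))$ has vanishing constant and vanishing linear terms, and its quadratic coefficient equals $\tfrac12u''(0)\bigl(\partial_x\alpha(0,t)^2-\partial_x\beta(0,t)^2\bigr)$, which also vanishes because $\partial_x\alpha(0,t)=-\partial_x\beta(0,t)$. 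Hence $l(m,x,t)-m\,g(x,t)=\OO{x^3}$ uniformly in $t$, while $g(x,t)=2(\beta-\alpha)=4x+\OO{x^2}$ is uniformly bounded below in absolute value by, say, $2|x|$ for small $x$; dividing then gives $\frac{l(m,x,t)}{m\,g(x,t)}-1=\OO{x^3}/\OO{x}=\OO{x^2}$, uniformly in $t$.

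The main obstacle is precisely this cancellation: a crude bound only gives $\frac{l}{m\,g}-1=\OO{x}$, and one must see that the $x^2$-coefficient of $u(\alpha)-u(\beta)$ drops out — which rests on the two elementary facts $u'(0)=0$ and $\partial_x\alpha(0,t)^2=\partial_x\beta(0,t)^2$. (Equivalently, one checks $AB=1-\ii x^3{\e}^{-\ii t}+\frac{x^4}{4}(1-{\e}^{-2\ii t})$, so $\alpha+\beta=\arg(AB)=\OO{x^3}$, which is what kills the $x^2$-term.) Everything else — analyticity in $x$ and uniformity over the compact interval $[-\pi,\pi]$ (these parallel the treatment of $c_m(x)$ in Section~\ref{fsh}), the branch-of-logarithm bookkeeping valid for $|x|<1/4$, and the final passage from an $x\to0$ estimate to a bound in $n$ — is routine.
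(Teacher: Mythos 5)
Your proposal is correct and follows essentially the same route as the paper: both reduce the lemma to the uniform estimate $\frac{l(m,x,t)}{m\,g(x,t)}-1=\OO{x^2}$ on $[-\pi,\pi]$ and then bound the Fourier integral \eqref{cff} and substitute $x=\omega/\g{2n-1}$. The only difference is in how the second-order agreement of $l(m,x,t)$ and $m\,g(x,t)$ is verified: the paper computes the explicit expansions $4mx-2mx^{2}\sin t+\OO{x^3}$ for both, while you obtain the same cancellation structurally, via $\alpha=\arg A$, $\beta=\arg B$, the identity $l-mg=u(\alpha)-u(\beta)$ with $u'(0)=0$, and $\partial_x\alpha(0,t)=-\partial_x\beta(0,t)$ (equivalently $\alpha+\beta=\arg(AB)=\OO{x^3}$) -- a sound, somewhat more conceptual verification of the same key step.
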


\begin{proof}
A second order expansion with respect to $x$ shows that\footnote{\emph{Mathematica} verification of Lemma~\ref{lm26} is in the file \texttt{OP/symbolic/12\_lemmas\_\ref{lm26}\_and\_\ref{resple}.nb}.} 
\begin{align}
\frac{1-\frac{x^2}{2}-\ii x -\frac{x^2}{2}{\e}^{-\ii t}}{1-\frac{x^2}{2}+\ii x -\frac{x^2}{2}{\e}^{\ii t}}=1-2\ii x -(2-\ii \sin t)x^2+\OO{x^3};\\
\frac{1-\frac{x^2}{2}+\ii x +\frac{x^2}{2}{\e}^{-\ii t}}{1-\frac{x^2}{2}-\ii x -\frac{x^2}{2}{\e}^{\ii t}}=1+2\ii x -(2+\ii \sin t)x^2+\OO{x^3}.
\end{align}
This implies 
\begin{align*}
&l(m,x,t)=\\
&\ii \left(\left(1-2\ii x -(2-\ii \sin t)x^2+\OO{x^3}\right)^m-\left(1+2\ii x -(2+\ii \sin t)x^2+\OO{x^3}\right)^m\right)\\
&=\ii \left(1-2\ii m x -m(2m-\ii \sin t)x^2-(1+2\ii m x -m (2m +\ii \sin t)x^2)+\OO{x^3}\right)\\
&=4m x-2m x^2\sin t+\OO{x^3}.
\end{align*}
A second order expansion of $g(x,t)$ with respect to $x$ also produces 
\begin{align*}
m\,g(x,t)&=4m x-2m x^2\sin t+\OO{x^3}.
\end{align*}
The last two equations imply
\[
\frac{l\!\left(m,x,t\right)}{m\,g\!\left(x,t\right)}-1=\OO{x^2},
\] 
which implies \eqref{bk2}.
\end{proof}

\begin{lemma}\label{resple} For all $m\neq0$, all $k$ and all sufficiently large $n$,
\[f_k^{m,n}=\OO{{\g{2n-1}^{-2(|k|-|m|)}}}. \]
\end{lemma}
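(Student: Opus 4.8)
The plan is to adapt the contour strategy of Section~\ref{fsh}, but this time to read $f_k^{m,n}$ as one Laurent coefficient of $R_m/g^\ast$ on an annulus around $|z|=1$, and to estimate that coefficient by pushing the circle of integration far in (when $k<0$) or far out (when $k>0$), rather than by collapsing onto $\{|z|=x^2\}$ as was done for the $c_m$. First I would make two harmless reductions. If $|k|\le|m|$ the exponent $-2(|k|-|m|)$ is non‑negative, so the asserted bound is weaker than Lemma~\ref{lm26} and nothing is to prove; hence assume $|k|>|m|$. Since $|f_k^{m,n}|=|f_{-k}^{-m,n}|$ by Lemma~\ref{key2}(i) while $|{-k}|-|{-m}|=|k|-|m|$, I may also assume $m$ is a positive integer. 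Setting $x=\omega/\g{2n-1}\to0$, the task becomes: for each integer $m\ge1$ and each integer $k$ with $|k|>m$, show $f_k^m(x)=\OO{x^{2(|k|-m)}}$ as $x\to0^+$.

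Next I would pass to a contour integral. Substituting $z=\e^{\ii t}$ in $\varepsilon_m(x,t)$ and clearing each $\e^{-\ii t}$ by multiplying the relevant numerator and denominator by $z$, one obtains, with the linear polynomials $P_1(z)=(1-\tfrac{x^2}2-\ii x)z-\tfrac{x^2}2$, $P_2(z)=(1-\tfrac{x^2}2+\ii x)z+\tfrac{x^2}2$, $Q_1(z)=1-\tfrac{x^2}2-\ii x+\tfrac{x^2}2z$, $Q_2(z)=1-\tfrac{x^2}2+\ii x-\tfrac{x^2}2z$ — precisely the numerators and denominators of the fractions in \eqref{fs}--\eqref{gs} — the identities
\[
l(m,x,t)=\ii\,z^{-m}R_m(x,z),\qquad R_m(x,z)=\Bigl(\tfrac{P_1(z)}{Q_2(z)}\Bigr)^{\!m}-\Bigl(\tfrac{P_2(z)}{Q_1(z)}\Bigr)^{\!m},\qquad g(x,t)=g^\ast(x,z),
\]
with $g^\ast$ the same function as in Section~\ref{fsh}. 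Hence $\varepsilon_m(x,t)=\tfrac{\ii}{m}R_m(x,z)/g^\ast(x,z)-z^m$, and since $k\neq m$ the $z^m$‑term contributes nothing to $\tfrac1{2\pi\ii}\oint_{|z|=1}z^{-k-1}\varepsilon_m(x,z)\,\dx z$; thus $f_k^m(x)=\tfrac{\ii}{m}\,r_k(x)$, where $r_k(x)$ is the coefficient of $z^k$ in the Laurent expansion of $R_m/g^\ast$ near $|z|=1$.

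The crucial remark is that $R_m/g^\ast$ is much tamer than it appears. The $2\pi\Zset$‑ambiguity of the logarithms in $g^\ast$ disappears under exponentiation, so $\e^{-\ii g^\ast(x,z)}=P_1(z)Q_1(z)/\bigl(P_2(z)Q_2(z)\bigr)$, and therefore
\[
R_m(x,z)=\Bigl(\tfrac{P_2(z)}{Q_1(z)}\Bigr)^{\!m}\bigl(\e^{-\ii m g^\ast(x,z)}-1\bigr),\qquad\text{so}\qquad\frac{R_m(x,z)}{g^\ast(x,z)}=\Bigl(\tfrac{P_2(z)}{Q_1(z)}\Bigr)^{\!m}E\bigl(g^\ast(x,z)\bigr),
\]
where $E(w)=(\e^{-\ii mw}-1)/w$ is entire and obeys $|E(w)|\le m\,\e^{m|w|}$. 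Consequently $R_m/g^\ast$ is analytic wherever $P_2/Q_1$ and $g^\ast$ are analytic; invoking the description of the cuts of $g^\ast$ from Section~\ref{fsh} (the arc through the origin inside $\{|z|\le x^2/2\}$, and the two horizontal half‑lines through $v_1,v_2$, which lie in $\{|z|\ge 2/x^2-1\}$), it is analytic on the whole annulus $\tfrac{x^2}2<|z|<\tfrac2{x^2}-1$, which for small $x$ contains $|z|=1$. Hence $r_k(x)=\tfrac1{2\pi\ii}\oint_{|z|=\rho}z^{-k-1}R_m(x,z)/g^\ast(x,z)\,\dx z$ for every radius $\rho$ in that annulus.

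Finally I would choose $\rho$ and estimate crudely, using the factorisations $P_1=(1-\tfrac{x^2}2-\ii x)(z-w_2)$, $P_2=(1-\tfrac{x^2}2+\ii x)(z-w_1)$, $Q_1=\tfrac{x^2}2(z-v_1)$, $Q_2=-\tfrac{x^2}2(z-v_2)$ together with the known moduli $|w_1|,|w_2|<x^2/2$ and $|v_1|,|v_2|>2/x^2$. These give, on $\{|z|=\tfrac1{2x^2}\}$, the bounds $|P_2/Q_1|=\OO{x^{-2}}$ and $|g^\ast|=\OO{1}$, and on $\{|z|=2x^2\}$, the bounds $|P_2/Q_1|=\OO{x^{2}}$ and $|g^\ast|=\OO{1}$, with all implied constants independent of $x$. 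Plugging these into $|R_m/g^\ast|\le|P_2/Q_1|^m\,m\,\e^{m|g^\ast|}$ and applying the $ML$‑inequality: for $k>0$ take $\rho=\tfrac1{2x^2}$, whence $|r_k(x)|\le (2x^2)^k\cdot\OO{x^{-2m}}=\OO{x^{2(k-m)}}$; for $k<0$ take $\rho=2x^2$, whence $|r_k(x)|\le (2x^2)^{-k}\cdot\OO{x^{2m}}=\OO{x^{2(|k|+m)}}$. In both cases $f_k^m(x)=\OO{x^{2(|k|-m)}}=\OO{x^{2(|k|-|m|)}}$, which is the lemma. The only step requiring genuine care is the uniform‑in‑$x$ control of $|g^\ast|$ along the two circles $\{|z|=2x^2\}$ and $\{|z|=\tfrac1{2x^2}\}$ (keeping them a fixed distance from the cuts of $g^\ast$); everything else is a routine computation or a direct appeal to Section~\ref{fsh}.
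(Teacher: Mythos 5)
Your proposal is correct, and its skeleton is the same as the paper's: identify $f_k^{m}(x)$ with a Laurent coefficient of the analytic continuation $l^\ast/(m g^\ast)$ from Section~\ref{fsh}, and move the contour off $|z|=1$ using the cut/pole geometry established there. Where you genuinely deviate is in how the key obstruction -- the zero of $g^\ast$ at $pl_1(x)$ inside the unit disc -- is neutralised, and in how the two signs of $k$ are treated. The paper checks by direct (computer-verified) substitution that $l^\ast(m,x,pl_1(x))=0$, contracts the contour inward to $|z|=x^2$, reads off $\OO{x^{2(m-k)}}$, and transfers the bound to the opposite sign of $k$ via $f^{m,n}_{k}=\overline{f^{-m,n}_{-k}}$ (equation \eqref{mnk}). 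You instead factor $l^\ast/g^\ast=\ii z^{-m}\,(P_2/Q_1)^m E(g^\ast)$ with $E(w)=({\e}^{-\ii m w}-1)/w$ entire, exploiting ${\e}^{-\ii g^\ast}=P_1Q_1/(P_2Q_2)$; this removes \emph{all} zeros of $g^\ast$ at once with no computation, and you use \eqref{mnk} only to normalise $m>0$, handling $k>0$ by pushing outward to $|z|=1/(2x^2)$ -- which is legitimate because Section~\ref{fsh} locates the outer cuts through $v_1,v_2$ and the pole of $P_2/Q_1$ at $v_1$ outside $|z|=2/x^2-1$. The quantitative outcome is identical to the paper's ($\OO{x^{2(k-m)}}$ for $k>m>0$ and the stronger $\OO{x^{2(|k|+m)}}$ for $k<0$), and your explicit ML estimates make precise the step the paper calls ``easily seen''; the one point you flag, uniform boundedness of $g^\ast$ on the circles $|z|=2x^2$ and $|z|=1/(2x^2)$, does go through, since off the constructed cuts each logarithm in $g^\ast$ is a principal branch (imaginary part at most $\pi$) and the moduli of its argument are bounded above and below on those circles. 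The preliminary reduction of the case $|k|\le|m|$ to Lemma~\ref{lm26} is also correct, as the claimed exponent is then nonnegative (and Lemma~\ref{key2}(v) is only asserted for $|k|>|m|$ anyway). What your route buys is independence from the symbolic verification at $pl_1(x)$ and robustness against any further zeros of $g^\ast$; what the paper's buys is brevity and the fact that it never needs estimates outside the unit disc.
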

\begin{proof}
Let us define 
\begin{align}
l^\ast(m,x,z)=\ii\left(\left(\frac{1-\frac{x^2}{2}-\ii x-\frac{x^2}{2}z^{-1}}
{1-\frac{x^2}{2}+\ii x-\frac{x^2}{2}z}\right)^m-
\left(\frac{1-\frac{x^2}{2}+\ii x+\frac{x^2}{2}z^{-1}}{1-\frac{x^2}{2}-\ii x+\frac{x^2}{2}z}\right)^m\right);
\end{align}
then $l(m,x,t)=l^\ast\!\left(m,x,{\e}^{\ii t}\right)$, and  
\begin{align}
&f_k^{\,m,n}=\frac{1}{2\pi}\oint_{|z|=1}\left(\frac{ l^\ast\!\left(m,x,z\right)}{m\,g^\ast\!\left(x,z\right)}-1\right)z^{m- k}\frac{dz}{\ii z}.
\end{align}

We have already found the singularities and branch cuts for $g^\ast(x,z)$; we also saw that if $g^\ast(x, z)=0$, $x<1/4$ and $|z|<1$, then $z=pl_1(x)$. However, a direct substitution shows that $l^\ast(m,x,pl_1(x))=0$.\footnote{See file \texttt{OP/symbolic/12\_lemmas\_\ref{lm26}\_and\_\ref{resple}.nb}.}  Thus, $z=pl_1(x)$ is a removable singularity of ${l^\ast(m,x,z)}/{g^\ast(x,z)}$ and consequently integration over the unit circle can be replaced by integration over a circle of radius $x^2$, thus obtaining 
\begin{align}
f_k^{m}(x)=\frac{1}{2\pi}\int_{-\pi}^{\pi}\left(\frac{ l^\ast\!\left(m,x,x^2{\e}^{\ii t}\right)}{m\,g^\ast\!\left(x,x^2{\e}^{\ii t}\right)}-1\right)\left(x^2{\e}^{\ii t}\right)^{m- k}dt. 
\end{align}
This is easily seen to imply $|f_{k}^{m}(x)|=\OO{x^{2(m-k)}}$. Thus, for  $k<0$ we obtain $f_{k}^{m}(x)=\OO{x^{2(|k|-|m|)}}$ and \eqref{mnk} implies that the same is true for positive $k$ as well; consequently,
$f_k^{m,n}=\OO{\g{2n-1}^{-2(|k|-|m|)}}$. 
\end{proof}


\section{Consequences of Theorem \ref{t1}}

\begin{Corollary}[\ref{corollary1}]
Let $\gamma_n$ be as in Theorem~\ref{t1}; then the limits below exist and satisfy
\begin{equation*}
\lim_{n\rightarrow \infty}
\frac{\sum_{k=0}^np^2_k(\omega)}{\sum_{k=0}^n\frac{1}{\gamma_k}}=\frac{1}{2}\lim_{n\rightarrow \infty}\g{n}(p^2_{n}(\omega)+p^2_{n+1}(\omega))
\end{equation*}
and convergence is uniform on every compact set.
\end{Corollary}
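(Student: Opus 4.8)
The plan is to prove the equivalent symmetric form \eqref{8} and then read off \eqref{eqsum1} from \eqref{even}. Put $A_n(\omega)=\sum_{k=0}^n p_k^2(\omega)$ and $B_n=\sum_{k=0}^n 1/\g{k}$; by condition \ref{c7} the sequence $B_n$ is strictly increasing with $B_n\to\infty$, and it does \emph{not} depend on $\omega$. A naive term-by-term Stolz--Ces\`aro argument is unavailable here, since $\g{n}p_n^2(\omega)$ does not converge (only $\g{n}(p_n^2(\omega)+p_{n+1}^2(\omega))$ does, by Theorem~\ref{t1}), so instead I group the summands in consecutive pairs: writing $u_j(\omega)=p_{2j}^2(\omega)+p_{2j+1}^2(\omega)$ and $v_j=1/\g{2j}+1/\g{2j+1}>0$, one has $A_{2n+1}(\omega)=\sum_{j=0}^n u_j(\omega)$ and $B_{2n+1}=\sum_{j=0}^n v_j$, while Theorem~\ref{t1} in the form \eqref{7} says precisely that $u_j(\omega)/v_j$ converges as $j\to\infty$, uniformly for $|\omega|\le B$, to a limit $L(\omega)$ with $m_B\le L(\omega)\le M_B$.

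Next I establish a uniform Stolz--Ces\`aro step. Given $\varepsilon>0$, choose $N$ (independent of $\omega$ with $|\omega|\le B$) so that $|u_j(\omega)/v_j-L(\omega)|<\varepsilon$ for all $j\ge N$. Then for $n>N$,
\begin{align*}
\frac{A_{2n+1}(\omega)}{B_{2n+1}}=\frac{A_{2N-1}(\omega)}{B_{2n+1}}+\frac{1}{B_{2n+1}}\sum_{j=N}^{n}\frac{u_j(\omega)}{v_j}\,v_j,
\end{align*}
where the first term tends to $0$ uniformly on $|\omega|\le B$ because $A_{2N-1}$ is a finite sum of continuous (hence on the compact set uniformly bounded) functions while $B_{2n+1}\to\infty$, and the second term lies between $(L(\omega)-\varepsilon)(B_{2n+1}-B_{2N-1})/B_{2n+1}$ and $(L(\omega)+\varepsilon)(B_{2n+1}-B_{2N-1})/B_{2n+1}$, with $(B_{2n+1}-B_{2N-1})/B_{2n+1}\to 1$. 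Hence $\limsup_{n}\sup_{|\omega|\le B}\bigl|A_{2n+1}(\omega)/B_{2n+1}-L(\omega)\bigr|\le\varepsilon(1+M_B)$, and letting $\varepsilon\to 0$ shows $A_{2n+1}/B_{2n+1}\to L$ uniformly on $|\omega|\le B$.

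To pass from the odd indices to all indices, note Theorem~\ref{t1} gives $p_k^2(\omega)\le \g{k}(p_k^2(\omega)+p_{k+1}^2(\omega))/\g{k}\le M_B/\g{k}\to 0$ uniformly on $|\omega|\le B$, while also $1/\g{k}\to 0$; since $A_{2n}=A_{2n+1}-p_{2n+1}^2$ and $B_{2n}=B_{2n+1}-1/\g{2n+1}$ with $B_{2n+1}\to\infty$, it follows that $A_{2n}/B_{2n}-A_{2n+1}/B_{2n+1}\to 0$ uniformly, so the full sequence $A_N/B_N$ converges uniformly to $L$ on every compact interval, which is \eqref{8}. Finally \eqref{even} gives $L(\omega)=\tfrac12\lim_{n}\g{2n}(p_{2n}^2(\omega)+p_{2n+1}^2(\omega))$; and since by Theorem~\ref{t1} the whole sequence $\g{n}(p_n^2(\omega)+p_{n+1}^2(\omega))$ converges, its limit along the even subsequence equals $\lim_{n}\g{n}(p_n^2(\omega)+p_{n+1}^2(\omega))$, yielding \eqref{eqsum1} together with uniformity on compact sets. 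The one point that needs care is the pairing of summands (forced on us because $\g{n}p_n^2(\omega)$ oscillates) combined with the uniform version of the Stolz--Ces\`aro estimate; once the partition into pairs is in place, uniformity is essentially automatic, because the denominator $B_n$ is independent of $\omega$ and the finitely many leftover numerator terms are continuous in $\omega$ and bounded on the compact set.
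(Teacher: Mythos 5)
Your proposal is correct and follows essentially the same route as the paper: pair the summands as $p_{2j}^2+p_{2j+1}^2$ over $1/\g{2j}+1/\g{2j+1}$, apply a Stolz--Ces\`aro argument (the paper simply cites the theorem, while you spell out a uniform $\varepsilon$--$N$ version, which in fact makes the claimed uniformity on compact sets more explicit), and then dispose of the parity mismatch by showing the single leftover term $p_{2n+1}^2(\omega)$, respectively $1/\g{2n+1}$, is negligible against the divergent denominator --- exactly the paper's treatment of the even-index case in a slightly different algebraic dressing. No gaps; only the cosmetic point that the bound $M_B$ controls the limit, so one should say ``eventually bounded by $M_B+1$'' when bounding $\g{k}(p_k^2+p_{k+1}^2)$ termwise, which your uniform-convergence setup already supplies.
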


\begin{proof}
Using Theorem  \ref{t1} and equality \eqref{even}, let $L(\omega)>0$ be such that\footnote{Numerical simulations regarding the convergence and equality of the two limits mentioned in this corollary are in file \texttt{OP/numerical/5-OP-asymptotics.nb}.}
\begin{equation*}
\frac{1}{2}\LIM{n}\g{n}(p_{n}^2(\omega)+p_{n+1}^2(\omega))={\lim_{n\rightarrow \infty}{\frac{p^2_{2n}(\omega)+p^2_{2n+1}(\omega)}{\frac{1}{\g{2n}}+\frac{1}{\g{2n+1}}}}}=L(\omega).
\end{equation*}
For odd $n$, $n=2k+1$, since $\sum_{j=0}^n1/\g{j}$ diverges, the Stolz-Ces\`{a}ro theorem immediately implies that 
\begin{equation*}
\frac{\sum_{j=0}^{2k+1}p_j^2(\omega)}{\sum_{j=0}^{2k+1}\frac{1}{\g{j}}}=\frac{\sum_{j=0}^{k}(p_{2j}^2(\omega)+p_{2j+1}^2(\omega))}{\sum_{j=0}^{k}\left(\frac{1}{\g{2j}}+\frac{1}{\g{2j+1}}\right)}\rightarrow L(\omega).
\end{equation*}
For even $n$ we observe that 
\begin{align*}
\frac{\sum_{j=0}^{2k+2}p_j^2(\omega)}{\sum_{j=0}^{2k+2}\frac{1}{\g{j}}}=
\frac{\sum_{j=0}^{2k+1}p_j^2(\omega)}{\sum_{j=0}^{2k+1}\frac{1}{\g{j}}}\;
\left(1-\frac{\frac{1}{\g{2k+2}}}{\sum_{j=0}^{2k+2}\frac{1}{\g{j}}}\right)+
\frac{p_{2k+2}^2(\omega)}{\sum_{j=0}^{2k+2}\frac{1}{\g{j}}}.
\end{align*}
Our conditions on the recurrence coefficients and the case for odd $n$ imply that the first summand converges to $L(\omega)$ and that the second summand satisfies 
\begin{align*}
0\leq \frac{p_{2k+2}^2(\omega)}{\sum_{j=0}^{2k+2}\frac{1}{\g{j}}}< \frac{\g{2n+2}(p_{2k+2}^2(\omega)+p_{2k+3}^2(\omega))}{\sum_{j=0}^{2k+2}\frac{1}{\g{j}}}\rightarrow 0.
\end{align*}
\end{proof}

\begin{Corollary}[\ref{corollary2}]If 
 $\gamma_n=(n+1)^p$ for some $p$ such that $0<p<1$, then \[
0<\lim_{n\rightarrow\infty}\frac{\sum_{j=0}^np^2_j(\omega)}{(n+1)^{1-p}}<\infty.\]
\end{Corollary}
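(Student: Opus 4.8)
The plan is to deduce this immediately from Corollary~\ref{corollary1} together with an elementary estimate of the partial sums $\sum_{k=0}^{n}1/\g{k}$. First I would invoke Lemma~\ref{lemma0}(a): the coefficients $\g{n}=(n+1)^p$ with $0<p<1$ satisfy conditions \ref{c2}--\ref{c6}, so both Theorem~\ref{t1} and Corollary~\ref{corollary1} apply. By Corollary~\ref{corollary1} the limit
\[
L(\omega)=\LIM{n}\frac{\sum_{k=0}^{n}p_k^2(\omega)}{\sum_{k=0}^{n}\frac{1}{\g{k}}}=\frac12\LIM{n}\g{n}\bigl(p_n^2(\omega)+p_{n+1}^2(\omega)\bigr)
\]
exists and the convergence is uniform on every compact set; moreover, by \eqref{eqbounds} in Theorem~\ref{t1}, for every $B>0$ there are constants with $0<m_B\le L(\omega)\le M_B<\infty$ for all $|\omega|\le B$.

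Next I would estimate the denominator. Since $\sum_{k=0}^{n}1/\g{k}=\sum_{k=1}^{n+1}k^{-p}$ with $0<p<1$, comparing this sum with the integral $\int_{1}^{n+1}x^{-p}\,\dx x=\frac{(n+1)^{1-p}-1}{1-p}$ (equivalently, applying Euler--Maclaurin) gives
\[
\sum_{k=1}^{n+1}k^{-p}=\frac{(n+1)^{1-p}}{1-p}+\OO{1},
\]
and hence
\[
\LIM{n}\frac{\sum_{k=0}^{n}\frac{1}{\g{k}}}{(n+1)^{1-p}}=\frac{1}{1-p}.
\]

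Finally I would factor
\[
\frac{\sum_{k=0}^{n}p_k^2(\omega)}{(n+1)^{1-p}}=\frac{\sum_{k=0}^{n}p_k^2(\omega)}{\sum_{k=0}^{n}\frac{1}{\g{k}}}\cdot\frac{\sum_{k=0}^{n}\frac{1}{\g{k}}}{(n+1)^{1-p}},
\]
so that the left-hand side converges to $L(\omega)/(1-p)$. This limit is strictly positive and finite because $0<m_B\le L(\omega)\le M_B<\infty$ on $|\omega|\le B$, and the convergence is uniform on every compact set since the first factor converges uniformly by Corollary~\ref{corollary1} while the second factor does not depend on $\omega$. I do not expect any real obstacle here: the only non-formal ingredient is the asymptotics of $\sum_{k\le n}k^{-p}$, which is standard, and everything else is a direct bookkeeping use of the already-established Theorem~\ref{t1} and Corollary~\ref{corollary1}.
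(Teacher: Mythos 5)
Your proposal is correct and follows essentially the same route as the paper: both deduce the statement from Corollary~\ref{corollary1} combined with a comparison of $\sum_{k=0}^{n}1/\g{k}$ against $\int_{1}^{n+1}x^{-p}\,\dx x$. In fact your version is slightly sharper, since you give the two-sided asymptotic $\sum_{k=0}^{n}1/\g{k}\sim(n+1)^{1-p}/(1-p)$ (hence the exact limit $L(\omega)/(1-p)$), whereas the paper records only an $\operatorname{O}$-bound for the denominator.
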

\begin{proof}
Follows from the previous Lemma and the fact that in this case  
\begin{equation*}
\sum_{k=0}^{n}\frac{1}{\g{k}}=\OO{\sum_{k=0}^{n}(k+1)^{-p}}=\OO{\int_{1}^{n+1}x^{-p}{d}x}=\OO{\frac{(n+1)^{1-p}-1}{1-p}}.\label{simpg}
\end{equation*}
\end{proof}


\section{Hilbert Spaces Associated with Orthonormal Polynomials}

We now present an application of Corollary~\ref{corollary1}; in fact, this application was the author's sole motivation  for the present work, because it  proves his conjecture from \cite{I1} under additional assumptions. We denote by $\dd_t$ the operator of differentiation  with respect to variable $t$, and in the remaining part of this paper we will consider functions  $f:\Rset\rightarrow \Cset$ whose real and imaginary parts are infinitely differentiable functions of a \emph{real variable}. 
The set of such functions will be denoted by $\Dset$. Note that if $f(t)\in\Dset$,  then also 
$\overline{f(t)}\in \Dset$ and $D_t[\overline{f(t)}]=\overline{D_t[f(t)]}$.

Assuming that the families of orthogonal polynomials we consider satisfy our conditions \ref{c2}-\ref{c6},
we define a corresponding family of linear differential operators $\K{n}_t$ by
\begin{equation}\label{k}
\K{n}_t=(-{\ii})^{n}
p_{\scriptstyle{n}}
\left(\ii\;\frac{{\rm d}}{{\rm d} t}\right).
\end{equation}
Such operators have real coefficients and satisfy
\begin{equation}
\K{n}_t[{\e}^{\ii\omega t}]=
{\ii}^np_{\scriptstyle{n}}(\omega)\,{\e}^{\ii\omega t}.
\end{equation}
It is easy to see that such operators satisfy the recurrence
\begin{equation}\label{three-term}
\g{n}\,\K{n+1}_t=\dd_t\circ
\K{n}_t+\g{n-1}\, \K{n-1}_t,
\end{equation}
with the same coefficients $\gamma_n>0$ as in \eqref{poly}. As for orthonormal polynomials, setting $\g{-1}=1$ and $\K{-1}_t[f(t)]\equiv 0$ the above recurrence is valid for all $n\geq 0$.
The following lemma shows that  differential operators $\K{n}_t$ which correspond to polynomials $\PP{n}{\omega}$ have a property which is analogous to the Christoffel--Darboux equality for orthogonal polynomials, 
\begin{equation}\label{CDP}
(\omega-\sigma)\sum_{k=0}^{n}
\PP{{k}}{\omega}\PP{{k}}{\sigma}
= \gamma_n
(\PP{{n+1}}{\omega}\PP{{n}}{\sigma}-\PP{{n+1}}{\sigma}
\PP{{n}}{\omega}),
\end{equation}

\begin{lemma}\label{cd}
For all $f,g\in\Dset$ and all $n\in\Nset$,
\begin{equation}\label{C-D}
 \dd_t\left[\sum_{m=0}^{n} \K{m}_t[f(t)]\,\K{m}_t[{g(t)}]\right]= {\g{n}}\,
(\K{n+1}_t[f(t)]\,\K{n}_t[g(t)]+\K{n}_t[f(t)]\,\K{n+1}_t[g(t)]).
\end{equation}
\end{lemma}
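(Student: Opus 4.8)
The plan is to argue by induction on $n$, using the operator three term recurrence \eqref{three-term} together with the ordinary product rule for $\dd_t$; this is the exact analogue of the standard telescoping derivation of the Christoffel--Darboux formula \eqref{CDP}, with \eqref{poly} replaced by \eqref{three-term}.

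For the base case $n=0$ note that $p_0(\omega)=1$ forces $\K{0}_t$ to be the identity operator, so the left hand side is $\dd_t[f(t)g(t)]=\dd_t[f(t)]\,g(t)+f(t)\,\dd_t[g(t)]$. On the other hand, \eqref{three-term} with $n=0$, together with the conventions $\g{-1}=1$ and $\K{-1}_t\equiv 0$, gives $\g{0}\,\K{1}_t=\dd_t\circ\K{0}_t=\dd_t$, so the right hand side equals $\dd_t[f(t)]\,g(t)+f(t)\,\dd_t[g(t)]$ as well, and the two sides agree.

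For the inductive step I would assume the identity holds with $n$ replaced by $n-1$. Adding $\dd_t\bigl[\K{n}_t[f(t)]\,\K{n}_t[g(t)]\bigr]$ to both sides of the inductive hypothesis produces exactly the desired left hand side for $n$, so it remains to simplify
\[
\g{n-1}\bigl(\K{n}_t[f]\,\K{n-1}_t[g]+\K{n-1}_t[f]\,\K{n}_t[g]\bigr)+\dd_t\bigl[\K{n}_t[f]\,\K{n}_t[g]\bigr].
\]
I would apply the product rule to the last term and then rewrite $\dd_t\circ\K{n}_t=\g{n}\,\K{n+1}_t-\g{n-1}\,\K{n-1}_t$, which is just \eqref{three-term} rearranged, applied to $f$ and to $g$. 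After this substitution the two $\g{n-1}$ contributions cancel in pairs, and what survives is precisely $\g{n}\bigl(\K{n+1}_t[f]\,\K{n}_t[g]+\K{n}_t[f]\,\K{n+1}_t[g]\bigr)$, which is the right hand side for $n$.

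There is essentially no serious obstacle here: the only points needing a little care are the bookkeeping of the $\g{n-1}$ terms in the cancellation and checking that the boundary conventions $\g{-1}=1$, $\K{-1}_t\equiv 0$ make the base case go through. The fact that the operators $\K{n}_t$ have real coefficients and map $\Dset$ into $\Dset$ guarantees that every expression appearing in the argument is well defined for $f,g\in\Dset$.
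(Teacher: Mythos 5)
Your proof is correct and matches the paper's own argument, which is exactly the induction on $n$ using the recurrence \eqref{three-term} in the inductive step and its $n=0$ instance for the base case (the paper also mentions the equivalent telescoping-sum formulation). The cancellation of the $\g{n-1}$ terms after substituting $\dd_t\circ\K{n}_t=\g{n}\,\K{n+1}_t-\g{n-1}\,\K{n-1}_t$ is precisely the intended mechanism.
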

\begin{proof} Use \eqref{three-term} to form a telescopic sum, or, alternatively, by induction on $n$, using  \eqref{three-term} in the induction step, and its instance for $n=0$ for the base case of induction. 
\end{proof}
When applying operators $\K{n}_t$ to a function $f(t)$ of a single variable $t$ we will omit  index $t$ and write instead just $\K{n}[f(t)]$. 
More details about operators $\K{n}$ can be found in \cite{I1},\cite{IGT}.

\begin{definition} Assume that the recurrence coefficients satisfy conditions \ref{c2}-\ref{c6};
we denote by $\CC$ the vector space
of functions $f(t)\in\Dset$ such that the sequence of corresponding functions $(\beta_{n}^{f}(t)\,:\,n\in\Nset)$ defined by 
\begin{equation}\label{tau}
\beta_{n}^{f}(t)=\g{n} (|\K{n}[f(t)]|^2+ |\K{n+1}[f(t)]|^2)
\end{equation}
converges uniformly on every compact interval $I\subset\Rset$.
\end{definition}

\begin{lemma}\label{FG}
If $f(t)\in\CC$ then $(\beta_{n}^{f}(t)\,:\,n\in\Nset)$ converges to a constant function, \\
$\LIM{n}\beta_{n}^{f}(t)=L$. Moreover, if we define 
\begin{equation}\label{nu}
\nu_{n}^{f}(t)=\frac{\sum_{k=0}^{n} |\K{k}[f(t)]|^2}{\sum_{k=0}^{n}\frac{1}{\gamma_k}},
\end{equation}
then $\nu_{n}^{f}(t)$ converges to a constant function $L/2$, uniformly on every compact interval.
\end{lemma}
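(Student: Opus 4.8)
The plan is to reduce the lemma to two ingredients: a Stolz--Cesàro identification of $\LIM{n}\nu_n^f$, which is the function-space analogue of Corollary~\ref{corollary1} with $|\K{k}[f(t)]|^2$ in the role of $p_k^2(\omega)$; and a bound on the $t$-derivative of the partial sums $\sum_{m=0}^{n}|\K{m}[f(t)]|^2$ coming from the Christoffel--Darboux type identity of Lemma~\ref{cd}, which forces the limit to be a constant function.

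First I would note that, since the $\K{m}$ are differential operators with real (constant) coefficients, $\overline{\K{m}[f(t)]}=\K{m}[\overline{f(t)}]$, so $|\K{m}[f(t)]|^2=\K{m}[f(t)]\,\K{m}[\overline{f(t)}]$; applying Lemma~\ref{cd} with $g=\overline{f}$ then gives
\[
\dd_t\!\left[\sum_{m=0}^{n}|\K{m}[f(t)]|^2\right]=2\g{n}\,\Re\!\bigl(\K{n+1}[f(t)]\,\overline{\K{n}[f(t)]}\bigr),
\]
whose modulus is at most $\g{n}\bigl(|\K{n}[f(t)]|^2+|\K{n+1}[f(t)]|^2\bigr)=\beta_n^f(t)$ by $2|zw|\le|z|^2+|w|^2$. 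On a fixed compact interval $I$ the continuous functions $\beta_n^f$ converge uniformly because $f\in\CC$, hence are uniformly bounded on $I$, say by $M_I$; therefore $\bigl|\dd_t\sum_{m=0}^{n}|\K{m}[f(t)]|^2\bigr|\le M_I$ on $I$, uniformly in $n$.

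Next I would carry out the Stolz--Cesàro argument of Corollary~\ref{corollary1} verbatim: \ref{c7} makes $\sum_{k=0}^{n}1/\g{k}\to\infty$, the numerator of $\nu_n^f$ is nondecreasing, and splitting into odd and even $n$ yields $\nu_n^f(t)\to\LIM{j}\bigl(|\K{2j}[f(t)]|^2+|\K{2j+1}[f(t)]|^2\bigr)\big/\bigl(1/\g{2j}+1/\g{2j+1}\bigr)$. Writing this ratio as $\beta_{2j}^f(t)\,\g{2j+1}/(\g{2j}+\g{2j+1})$ and using $\g{2j}/\g{2j+1}\to1$ (from \ref{c3}, as in \eqref{even}) identifies the limit as $\tfrac12\beta^f(t)$, where $\beta^f(t):=\LIM{n}\beta_n^f(t)$. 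Since $\beta_n^f\to\beta^f$ uniformly on $I$ and all the denominators are $t$-independent, every error term in the Stolz--Cesàro estimate is dominated by a multiple of $\beta_n^f$, so the convergence $\nu_n^f\to\tfrac12\beta^f$ is uniform on every compact interval. Dividing the bound of the previous paragraph by $\sum_{k=0}^{n}1/\g{k}$ gives $\bigl|\dd_t\nu_n^f(t)\bigr|\le M_I\big/\sum_{k=0}^{n}1/\g{k}\to0$ uniformly on $I$, and since each $\nu_n^f\in\Dset$, the standard theorem on termwise differentiation shows $\tfrac12\beta^f$ is $C^1$ on $I$ with zero derivative, hence constant on $I$. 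Letting $I$ exhaust $\Rset$ we obtain $\beta^f\equiv L$ for a constant $L$, so $\LIM{n}\beta_n^f(t)=L$ and $\nu_n^f\to L/2$ uniformly on every compact interval.

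I expect the only delicate point to be this last step: checking that uniform convergence $\nu_n^f\to\tfrac12\beta^f$ together with uniform convergence $\dd_t\nu_n^f\to0$ really forces $\beta^f$ to be constant (not merely continuous), which requires the $C^1$ forms of those statements, and, symmetrically, that the Stolz--Cesàro step produces uniform rather than merely pointwise convergence. Both follow routinely once one observes that every error term in sight is controlled by $\beta_n^f$, whose uniform convergence on compacta is exactly the defining property of $f\in\CC$.
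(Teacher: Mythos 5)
Your proposal is correct and follows essentially the same route as the paper: identify $\LIM{n}\nu_n^f$ via Stolz--Ces\`{a}ro as $\tfrac12\LIM{n}\beta_n^f$, then use Lemma~\ref{cd} with $g=\overline{f}$ to bound $\bigl|\dd_t\,\nu_n^f(t)\bigr|$ by $\beta_n^f(t)\big/\sum_{k=0}^{n}1/\g{k}\to 0$ uniformly on compacta, and conclude by termwise differentiation that the limit is a constant function. The only difference is presentational: you spell out the final differentiation-of-limits step and the uniformity bookkeeping that the paper leaves implicit.
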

\begin{proof}
Assume $\beta_{n}^{f}(t)\rightarrow l(t)$, uniformly on every compact interval $I\subset\Rset$. 
Just as in the proof of Corollary \ref{corollary1}, the Stolz-Ces\`{a}ro theorem implies that $\nu_{n}^{f}(t)\rightarrow l(t)/2$. 
We now observe that  \eqref{C-D} with $g(t)=\overline{f(t)}$ yields
\begin{align*}
\left|\frac{d}{d t}\nu_{n}^{f}(t)\right|&\leq\frac{\g{n}\,
(|\K{n+1} [f(t)]\,\K{n} [\overline{f(t)}]|+|\K{n} [f(t)]\,\K{n+1} [\overline{f(t)}]|)}{\sum_{k=0}^{n}\frac{1}{\gamma_k}}\\
&\leq
\frac{\g{n}
(|\K{n} [f(t)]|^2+|\K{n+1} [f(t)]|^2)}{\sum_{k=0}^{n}\frac{1}{\gamma_k}}
\end{align*}
Since  $\g{n}(|\K{n} [f(t)]|^2+|\K{n+1} [f(t)]|^2)$ converges to a finite limit $l(t)$ uniformly on every finite interval  and  since $\sum_{k=0}^{n}{1}/{\gamma_k}$ diverges, ${d}/{d t}\;\nu_{n}^{f}(t)$ converges to $0$ uniformly on every finite interval as well. Thus,  for some $L\geq 0$, $\beta_{n}^{f}(t)$ and $\nu_{n}^{f}(t)$ converge to constant functions $L$ and $L/2$ respectively.
\end{proof}

\begin{corollary}\label{30}
Let $\mathcal{B}_0\subset \mathcal{B}$ be the set of all $f\in\mathcal{B}$ such that 
\[\LIM{n}\g{n}(|\K{n} [f(t)]|^2+|\K{n+1} [f(t)]|^2)=0;\] 
then in the quotient space $\mathcal{B}_2=\mathcal{B}/\mathcal{B}_0$, we can introduce a norm with the following expressions
whose value does not depend on $t$:
\begin{align}
\|f\|=\LIM{n}\sqrt{\frac{\g{n}}{2}(|\K{n} [f(t)]|^2+|\K{n+1} [f(t)]|^2)}=\LIM{n}\sqrt{\frac{\sum_{k=0}^{n} |\K{k}[f(t)]|^2}{\sum_{k=0}^{n}\frac{1}{\gamma_k}}}
\end{align}
\end{corollary}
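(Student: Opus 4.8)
The plan is to read off essentially everything from Lemma~\ref{FG}. Fix $f\in\mathcal{B}$. By Lemma~\ref{FG} the sequence $\beta_n^f(t)=\g{n}(|\K{n}[f(t)]|^2+|\K{n+1}[f(t)]|^2)$ converges, uniformly on every compact interval, to a \emph{constant} $L=L_f\ge 0$, and $\nu_n^f(t)$ converges, uniformly on every compact interval, to the constant $L_f/2$. Since $x\mapsto\sqrt x$ is continuous, taking square roots commutes with both limits, and, because the sequences are uniformly bounded on each compact interval, the convergence of the square roots is again uniform there; hence
\[
\LIM{n}\sqrt{\tfrac{\g{n}}{2}\bigl(|\K{n}[f(t)]|^2+|\K{n+1}[f(t)]|^2\bigr)}=\sqrt{\tfrac{L_f}{2}}=\LIM{n}\sqrt{\nu_n^f(t)}.
\]
Thus the two displayed expressions agree, and their common value is independent of $t$; write $N(f)=\sqrt{L_f/2}$.

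Next I would record the small amount of algebra underlying the quotient construction. Since each $\K{n}$ is linear, setting
\[
u_n^f(t)=\sqrt{\g{n}}\,\bigl(\K{n}[f(t)],\ \K{n+1}[f(t)]\bigr)\in\Cset^2
\]
we have $u_n^{f+g}=u_n^f+u_n^g$, $u_n^{cf}=c\,u_n^f$, and $\beta_n^f(t)$ is the squared Euclidean norm of $u_n^f(t)$. Applying the triangle inequality in $\Cset^2$ termwise gives, for all $f,g\in\mathcal{B}$,
\[
\bigl|\,\sqrt{\beta_n^f(t)}-\sqrt{\beta_n^g(t)}\,\bigr|\ \le\ \sqrt{\beta_n^{f+g}(t)}\ \le\ \sqrt{\beta_n^f(t)}+\sqrt{\beta_n^g(t)},
\]
and the same holds with $f-g$ in place of $f+g$, since $\beta_n^{-g}=\beta_n^g$. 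From the upper bound with $f,g\in\mathcal{B}_0$ (so both outer quantities tend to $0$) we get $\beta_n^{f+g}(t)\to0$ uniformly on compacts, hence $f+g\in\mathcal{B}_0$; together with $\beta_n^{cf}=|c|^2\beta_n^f$ this shows that $\mathcal{B}_0$ is a linear subspace of $\mathcal{B}$, so $\mathcal{B}_2=\mathcal{B}/\mathcal{B}_0$ is a genuine quotient space. Letting $n\to\infty$ in the lower bound with $f-g$ in place of $f+g$ gives $|\sqrt{L_f}-\sqrt{L_g}|\le\sqrt{L_{f-g}}$, so $f-g\in\mathcal{B}_0$ (i.e.\ $L_{f-g}=0$) forces $L_f=L_g$; hence $N$ descends to a well-defined function $\|\cdot\|$ on $\mathcal{B}_2$.

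It then remains to verify the norm axioms for $\|\cdot\|$ on $\mathcal{B}_2$. Homogeneity $\|c[f]\|=|c|\,\|[f]\|$ is immediate from $\beta_n^{cf}=|c|^2\beta_n^f$. The triangle inequality $\|[f]+[g]\|\le\|[f]\|+\|[g]\|$ follows by passing to the limit $n\to\infty$ in the upper bound above and dividing by $\sqrt2$. Finally $\|[f]\|\ge0$ always, and $\|[f]\|=0$ is equivalent to $L_f=0$, which is exactly the defining condition $f\in\mathcal{B}_0$, i.e.\ $[f]=0$ in $\mathcal{B}_2$; so $\|\cdot\|$ is positive definite and is a norm. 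The only genuinely delicate point here is the well-definedness of $\|\cdot\|$ on the quotient, and it is handled precisely by the observation that linearity of the operators $\K{n}$ turns $\beta_n$ into the squared Euclidean norm of a $\Cset^2$-valued sequence, so that the triangle inequalities hold termwise and pass to the limit supplied by Lemma~\ref{FG}.
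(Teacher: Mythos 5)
Your argument is correct and follows exactly the route the paper intends: the paper states this corollary with no proof at all, treating it as immediate from Lemma~\ref{FG}, and your write-up merely supplies the routine verifications (identifying $\beta_n^f(t)$ as the squared Euclidean norm of a $\Cset^2$-valued sequence so the triangle and reverse triangle inequalities pass to the limit, giving well-definedness on the quotient and the norm axioms). The only caveat, which you share with the paper's own definition of $\mathcal{B}$, is that closure of $\mathcal{B}$ under addition (needed so that $[f]+[g]$ and $L_{f+g}$ make sense) is taken for granted rather than proved.
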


\begin{lemma}\label{prop1} Let $f,g\in\CC$ and let 
\begin{equation}\label{sigma}
\sigma^{fg}_n(t)=\frac{\sum_{k=0}^{n}
\K{k}[f(t)]{\K{k}[\overline{g(t)}]}}{\sum_{k=0}^{n}\frac{1}{\gamma_k}}.
\end{equation}
If  for some $t=t_0$ the sequence $\sigma^{fg}_n(t_0)$ converges as $n\rightarrow\infty$, then $\sigma^{fg}_n(t)$ converges for all $t$ to a constant function.
\end{lemma}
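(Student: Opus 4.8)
The plan is to differentiate $\sigma^{fg}_n(t)$ with respect to $t$, bound the derivative by means of the Christoffel--Darboux type identity of Lemma~\ref{cd} together with the Cauchy--Schwarz inequality, show that this derivative tends to $0$ uniformly on every compact interval, and then recover convergence of $\sigma^{fg}_n(t)$ for all $t$ by integrating back from $t_0$.

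First I would apply Lemma~\ref{cd} with $g(t)$ replaced by $\overline{g(t)}\in\Dset$, using that $\K{m}[\overline{g(t)}]=\overline{\K{m}[g(t)]}$ since the operators $\K{m}$ have real coefficients. Because $\sum_{k=0}^{n}1/\g{k}$ does not depend on $t$, equation~\eqref{C-D} then gives
\[
\dd_t\,\sigma^{fg}_n(t)=\frac{\g{n}\bigl(\K{n+1}[f(t)]\,\overline{\K{n}[g(t)]}+\K{n}[f(t)]\,\overline{\K{n+1}[g(t)]}\bigr)}{\sum_{k=0}^{n}\frac{1}{\g{k}}}.
\]
Applying the Cauchy--Schwarz inequality in $\Rset^2$ to the vectors $\bigl(|\K{n}[f(t)]|,|\K{n+1}[f(t)]|\bigr)$ and $\bigl(|\K{n+1}[g(t)]|,|\K{n}[g(t)]|\bigr)$ and writing $\g{n}=\sqrt{\g{n}}\cdot\sqrt{\g{n}}$, the absolute value of the numerator is at most $\sqrt{\beta_n^f(t)\,\beta_n^g(t)}$, with $\beta_n^f,\beta_n^g$ as in~\eqref{tau}, so that
\[
\bigl|\dd_t\,\sigma^{fg}_n(t)\bigr|\le \frac{\sqrt{\beta_n^f(t)\,\beta_n^g(t)}}{\sum_{k=0}^{n}\frac{1}{\g{k}}}.
\]

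Since $f,g\in\CC$, Lemma~\ref{FG} shows that $\beta_n^f(t)$ and $\beta_n^g(t)$ converge uniformly on every compact interval and hence are uniformly bounded there, while condition~\ref{c7} forces $\sum_{k=0}^{n}1/\g{k}\to\infty$; therefore $\dd_t\,\sigma^{fg}_n(t)\to 0$ uniformly on every compact interval. Finally, writing $\sigma^{fg}_n(t)=\sigma^{fg}_n(t_0)+\int_{t_0}^{t}\dd_s\,\sigma^{fg}_n(s)\,\dx s$ and bounding the integral by $|t-t_0|\,\sup_{s}\bigl|\dd_s\,\sigma^{fg}_n(s)\bigr|$ over the compact interval with endpoints $t_0$ and $t$, the assumption that $\sigma^{fg}_n(t_0)$ converges implies that $\sigma^{fg}_n(t)$ converges to the same limit for every $t$; thus $\sigma^{fg}_n$ converges, uniformly on every compact interval, to a constant function. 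The argument is routine and parallels the proof of Lemma~\ref{FG}; the only points requiring care are that $\sigma^{fg}_n$ is complex valued, so one integrates the derivative instead of invoking the mean value theorem, and that the identity $\K{m}[\overline{g(t)}]=\overline{\K{m}[g(t)]}$ is what permits Lemma~\ref{cd} to be applied to the pair $f,\overline{g}$.
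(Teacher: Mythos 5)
Your proof is correct and follows essentially the same route as the paper: differentiate $\sigma^{fg}_n$ via Lemma~\ref{cd} applied to $f$ and $\overline{g}$, bound the resulting expression by the quantities $\beta_n^f,\beta_n^g$ (the paper uses the inequality $2|ab|\le a^2+b^2$ where you use Cauchy--Schwarz, an immaterial difference), conclude the derivative tends to $0$ uniformly on compacts since the denominator diverges, and propagate convergence from $t_0$ to all $t$. Your explicit integration step merely spells out what the paper leaves implicit.
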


\begin{proof} 
Using Lemma \ref{cd} we get 
\begin{align*}
\left|\frac{d}{d t}\sigma_{n}^{fg}(t)\right|&\leq\frac{\g{n}\,
(|\K{n+1} [f(t)]\,\K{n} [\overline{g(t)}]|+|\K{n} [f(t)]\,\K{n+1} [\overline{g(t)}]|)}{\sum_{k=0}^{n}\frac{1}{\gamma_k}}\\
&\leq
\frac{\g{n}
(|\K{n} [f(t)]|^2+|\K{n+1} [f(t)]|^2+|\K{n} [g(t)]|^2+|\K{n+1} [g(t)]|^2)}{2\sum_{k=0}^{n}\frac{1}{\gamma_k}}.
\end{align*}
Since $f,g\in\CC$, the numerator converges uniformly on every finite interval; since the denominator diverges, $\left|\frac{d}{d t}\sigma_{n}^{f}(t)\right|$ converges uniformly to zero on every finite interval. Thus, if $\sigma^{fg}(t_0)$ converges for some $t=t_0$ it converges for all $t$ to a constant function. 
\end{proof}

\begin{theorem}\label{HA}
If the recurrence coefficients satisfy conditions \ref{c2}-\ref{c6}, then the complex exponentials $e_{\omega}(t)={\e}^{\ii \omega t}$ belong to the associated space $\CC_2$ and  for every $e_{\omega}(t)={\e}^{\ii \omega t}$ and $e_{\sigma}(t)={\e}^{\ii \sigma t}$ such that $\omega\neq \sigma$ we have 
\begin{align*}
 \LIM{n}\frac{\sum_{k=0}^{n} \K{k}_{t}[{\e}^{\ii \omega t}]  \K{k}_{t}[\overline{{\e}^{\ii \sigma t}}] }
{\sum_{j=0}^{n}\frac{1}{\gamma_j}}= 0.
\end{align*}
\end{theorem}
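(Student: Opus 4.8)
The plan is to put both complex exponentials into $\CC$ by invoking Theorem~\ref{t1}, then to make the bilinear expression completely explicit, at which point it collapses to a Christoffel--Darboux kernel whose size is already controlled by Theorem~\ref{t1}. For the membership claim, note that since $\K{k}_t[{\e}^{\ii\omega t}]={\ii}^k p_k(\omega){\e}^{\ii\omega t}$ we have $|\K{k}_t[{\e}^{\ii\omega t}]|^2=p_k^2(\omega)$, so the sequence $\beta_n^{e_\omega}(t)=\g{n}(p_k^2(\omega)+p_{n+1}^2(\omega))$ from \eqref{tau} does not depend on $t$; by Theorem~\ref{t1} it converges, hence trivially uniformly on every compact interval, so $e_\omega\in\CC$ and its class lies in $\CC_2$, and likewise for $e_\sigma$.

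Next I would use the symmetry of the family, $p_k(-\sigma)=(-1)^k p_k(\sigma)$, to compute $\K{k}_t[\overline{{\e}^{\ii\sigma t}}]=\K{k}_t[{\e}^{-\ii\sigma t}]={\ii}^k p_k(-\sigma){\e}^{-\ii\sigma t}=(-{\ii})^k p_k(\sigma){\e}^{-\ii\sigma t}$, so that, since ${\ii}^k(-{\ii})^k=1$,
\[
\sum_{k=0}^{n}\K{k}_t[{\e}^{\ii\omega t}]\,\K{k}_t[\overline{{\e}^{\ii\sigma t}}]={\e}^{\ii(\omega-\sigma)t}\sum_{k=0}^{n}p_k(\omega)p_k(\sigma).
\]
Now I would invoke the Christoffel--Darboux identity \eqref{CDP}, which gives $(\omega-\sigma)\sum_{k=0}^{n}p_k(\omega)p_k(\sigma)=\g{n}(p_{n+1}(\omega)p_n(\sigma)-p_{n+1}(\sigma)p_n(\omega))$, and bound its right-hand side using $2|ab|\le a^2+b^2$ together with $\g{n}p_{n+1}^2(\omega)\le\beta_n^{e_\omega}(t)$ and similar inequalities; this yields $|(\omega-\sigma)\sum_{k=0}^{n}p_k(\omega)p_k(\sigma)|\le\beta_n^{e_\omega}(t)+\beta_n^{e_\sigma}(t)$. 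Since both sequences on the right converge by Theorem~\ref{t1}, the partial sums $\sum_{k=0}^{n}p_k(\omega)p_k(\sigma)$ stay bounded in $n$.

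Finally, condition \ref{c7} gives $\sum_{j=0}^{n}1/\g{j}\to\infty$, so dividing a bounded sequence by one tending to infinity yields
\[
\LIM{n}\frac{\sum_{k=0}^{n}\K{k}_t[{\e}^{\ii\omega t}]\,\K{k}_t[\overline{{\e}^{\ii\sigma t}}]}{\sum_{j=0}^{n}1/\g{j}}={\e}^{\ii(\omega-\sigma)t}\LIM{n}\frac{\sum_{k=0}^{n}p_k(\omega)p_k(\sigma)}{\sum_{j=0}^{n}1/\g{j}}=0
\]
for every $t$, which is the assertion. (Alternatively, once convergence of $\sigma^{e_\omega e_\sigma}_n(t)$ is known at a single $t$, Lemma~\ref{prop1} forces the limit to be a constant function; since it is visibly a constant multiple of ${\e}^{\ii(\omega-\sigma)t}$ with $\omega\ne\sigma$, that constant must be $0$.) I do not expect a serious obstacle here beyond Theorem~\ref{t1} itself; the only step requiring a moment's care is the bound on the Christoffel--Darboux kernel $\g{n}(p_{n+1}(\omega)p_n(\sigma)-p_{n+1}(\sigma)p_n(\omega))$ in terms of the $\beta_n$'s, which is precisely where the boundedness from \eqref{eqbounds} enters.
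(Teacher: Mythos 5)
Your proposal is correct and follows essentially the same route as the paper: membership in $\CC_2$ via the convergence (with positive lower bound) of $\g{n}(p_n^2(\omega)+p_{n+1}^2(\omega))$ from Theorem~\ref{t1}/Corollary~\ref{corollary1}, reduction of the mixed sum to $\e^{\ii(\omega-\sigma)t}\sum_k p_k(\omega)p_k(\sigma)$, the Christoffel--Darboux identity with the $2|ab|\le a^2+b^2$ bound, and division by the divergent sum $\sum_j 1/\g{j}$ from \ref{c7}. The only blemish is the typo $p_k^2(\omega)$ for $p_n^2(\omega)$ in your expression for $\beta_n^{e_\omega}(t)$, which does not affect the argument.
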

\begin{proof}
Let $\omega> 0$;  since
\[
 |\K{n}_{t}[{\e}^{\ii \omega t}]|^2 = |{\ii}^n p_{n}(\omega){\e}^{\ii \omega t}|^2= p_{n}^2(\omega)
\]
and thus 
\begin{equation}{\g{n}( |\K{n}_{t}[{\e}^{\ii \omega t}]|^2+ |\K{n+1}_{t}[{\e}^{\ii \omega t}]|^2)}=
{\g{n}(p_{n}^2(\omega)+p_{n+1}^2(\omega))},\label{fnorm}
\end{equation}
our Corollary \ref{corollary1} implies that the sequence 
${\g{n}( |\K{n}_{t}[{\e}^{\ii \omega t}]|^2+ |\K{n+1}_{t}[{\e}^{\ii \omega t}]|^2)}$
converges to a positive finite limit uniformly in $t\in\Rset$ and uniformly in $\omega$ from any compact interval. Thus, $0<\|{\e}^{\ii \omega t}\|<\infty$ and ${\e}^{\ii \omega t} \in \CC_2$. Let $\omega\neq \sigma$. 
Note that 
\begin{align}\label{KKsin}
\sum_{k=0}^{n} \K{k}_{t}[{\e}^{\ii \omega t}]  \K{k}_{t}[{\e}^{-\ii \sigma t}] &=\sum_{k=0}^{n}  p_{k}(\omega)p_{k}(\sigma){\e}^{\ii (\omega-\sigma) t}.
\end{align}
By the Christoffel Darboux equality \eqref{CDP} we have
\begin{align}\label{cde}
\sum_{k=0}^{n}  p_{k}(\omega)p_{k}(\sigma)=\frac{\gamma_{n}}{\omega-\sigma}
(\PP{{n+1}}{\omega}\PP{{n}}{\sigma}-\PP{{n+1}}{\sigma}
\PP{{n}}{\omega}).
\end{align}
Thus, 
\begin{align*}
\left|\sum_{k=0}^{n} \K{k}_{t}[{\e}^{\ii \omega t}]  \K{k}_{t}[{\e}^{-\ii \sigma t}]\right| &=\left|\sum_{k=0}^{n}  p_{k}(\omega)p_{k}(\sigma)\right| \\
&\leq \frac{\gamma_{n}}{\omega-\sigma}
(|p_{n+1}(\omega)p_{n}(\sigma)|+|p_{n+1}(\sigma)
p_{n}(\omega)|)\\
&\leq \frac{\gamma_{n}}{2(\omega-\sigma)}
(p_{n+1}^2(\omega)+p_{n}^2(\sigma)+p_{n+1}^2(\sigma)+
p_{n}^2(\omega)).
\end{align*}
Since $\gamma_{n}
(p_{n+1}^2(\omega)+p_{n}^2(\omega))$ and $\gamma_{n}
(p_{n+1}^2(\sigma)+p_{n}^2(\sigma))$ both converge to a finite limit and $\Sigma_{k=0}^n1/\g{k}$ diverges as $n\rightarrow\infty$,  we obtain
\begin{align*}
 \LIM{n}\frac{\sum_{k=0}^{n} \K{k}_{t}[{\e}^{\ii \omega t}]  \K{k}_{t}[{\e}^{-\ii \sigma t}] }
{\sum_{k=0}^{n}\frac{1}{\gamma_k}}= 0.
\end{align*}
\end{proof}
Thus, on the vector space of all finite linear combinations of complex exponentials $\LIM{n}\sigma_n^{fg}(t)$ is independent of $t$ and it defines an inner product which makes all complex exponentials of distinct frequencies mutually orthogonal. Such an inner product can now be extended to various spaces relevant to signal processing, such as spaces of functions of the form 
$f(t)=\sum_{k=0}^\infty q_k{\e}^{\ii \omega_k t}$ where $\sum_{k=0}^\infty |q_k|<\infty$ and for some $B>0$ all $\omega_k$ satisfy $|\omega_k|<B$. Such signals have bounded amplitude and finite bandwidth; they are important, for example, for representation of speech signals. The values of  $\gamma_n(|\K{n}[f(t)]|^2+|\K{n+1}[f(t)]|^2)$ for various $n$ are measures of ``local energy" around an instant $t$ of a signal of infinite total energy (in the usual $L_2$ sense); increasing $n$ reduces such localisation.  Operators $(\K{k}\,:\,k\in\Nset)$ can also be used for frequency estimation of multiple sinusoids in the presence of coloured noise; see \cite{IGF}; they originated in the course of author's design of a pulse-width modulation switching power amplifier. 

From purely mathematical perspective, it remains to be seen if the method presented here can be applied to  non-symmetric families of orthogonal polynomials or families with bounded recurrence coefficients. For a general family of positive definite orthonormal polynomials $p_n(\omega)$ we have   
\begin{equation}\label{polyn}
{\g{n}}p_{n+1}(\omega)=({\omega}+\beta_n) p_{n}(\omega)-{\g{n-1}}\, p_{n-1}(\omega).
\end{equation}
It is easy to see that $a_n$, $b_n$ and $E_n$ can be defined so that \eqref{main1} and \eqref{polar} hold and that again $|E_n|=\prod_{j=0}^n|a_j+b_j\e^{-2\ii\Phi_{j-1}}|$. If we define $\arg z$ to satisfy $0\leq \arg z<2\pi$ with the corresponding cut along the positive reals, then for bounded recurrence coefficients such that $\g{n}\rightarrow \gamma$ and $\beta_n/\gamma_n\rightarrow \rho$  it is easy to see that $|\Im{(a_n)}|>|b_n|$ holds just in case\footnote{These calculations can be found in  \url{http://www.cse.unsw.edu.au/~ignjat/diff/OP2.zip}.}
\begin{equation}\label{condit}
 \left|\frac{\omega}{\gamma}-\rho\right|<2. 
 \end{equation}
Thus, the classical results of Paul Nevai, see Theorem 7 on page 23 of \cite{PN},  imply that our argument proving \eqref{td}, i.e., $\del{n}=\arg\bigl(\zt{n} + \ztt{n}{\e}^{-2\ii \Te{n-1}}\bigr)$ holds for all  $\omega$ in the support of the continuous part of the corresponding measure of orthogonality. As we have mentioned before, Nevai's results from \cite{PN} (formula (16) on page 140), imply that in such a case $p_{2n}^2(\omega)+p_{2n+1}^2(\omega)$ cannot converge; however, perhaps there is some hope that our method might be used to show directly that $\sum_{n=0}^{\infty}p_n^2(\omega)/(n+1)$ converges for such $\omega$, under some weak assumption on $\g{n}$ and $\beta_n$.\\

In the case of unbounded recurrence coefficients, assuming only $\g{n}\rightarrow \infty$,  $\s{n}=\g{n+1}-\g{n}\rightarrow 0$ and $\beta_n/\gamma_n\rightarrow \rho$, condition \eqref{condit} reduces to $|\rho|<2$. Thus, we obtain that $|\Im{a_n}|>|b_n|$ holds just in case $|\rho|<2$.  To the authors' big surprise, extensive numerical tests indicate that it is also just in case $|\rho|<2$ that the ratio ${\sum_{k=0}^np^2_k(\omega)}/{\sum_{k=0}^n{1}/{\gamma_k}}$ converges to a positive finite limit $L(\omega)$; the value of $\g{n}(p^2_{n}(\omega)+p^2_{n+1}(\omega))$ appears to be oscillating around $L(\omega)$, contained between two envelopes which converge to two horizontal straight lines. Thus, we finish this paper with the following conjecture.\footnote{Recently, Doron Lubinsky and the author have evaluated the limits from Corollary \ref{corollary1} for a class of even exponential weights \cite{IL} and even more recently, Grzegorz  \'{S}widerski and  Bartosz  Trojan have improved  Corollary \ref{corollary1}, proving it under weaker assumptions \cite{ST}; Grzegorz  \'{S}widerski also improved the results from \cite{IL} by evaluating these limits in a general setup \cite{S}.}
\begin{conjecture}
Assume that a family of orthonormal polynomials $p_n(\omega)$ satisfies recurrence \eqref{polyn}, coefficients $\g{n}$ are unbounded and satisfy conditions \ref{c2}-\ref{c6} and that  $\left|\LIM{n}{\beta_n}/{\g{n}}\right|<2.$ Then ${\sum_{k=0}^np^2_k(\omega)}/{\sum_{k=0}^n{1}/{\g{k}}}$ converges to a positive finite limit $L(\omega)$. Thus, in such a case the asymptotic growth rate of ${\sum_{k=0}^np^2_k(\omega)}$ does not depend on ${\beta_n}$ (but the value of $L(\omega)$ does). 
\end{conjecture}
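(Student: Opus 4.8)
The natural plan is to re-run the machinery of Sections~2--8 while keeping the extra term $\beta_n\,p_n(\omega)$ from \eqref{polyn}, and to reduce the claim, exactly as in the symmetric case, to convergence of $\mathcal{S}_n=2\sum_{j=0}^{n}\ln\mult{j}-\ln\!\left(\frac{1}{\g{2n}}+\frac{1}{\g{2n+1}}\right)$. As the discussion preceding the conjecture already notes, one can still define $\zt{n},\ztt{n},E_n$ so that the two--term recurrence \eqref{main1}, the polar decomposition \eqref{polar}, and the product formula $|E_n|=\prod_{j=0}^{n}|\zt{j}+\ztt{j}{\e}^{-2\ii\Te{j-1}}|$ all hold, with $p_{2n}^2(\omega)+p_{2n+1}^2(\omega)=|E_n|^2$ again true. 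Once $\mathcal{S}_n$ is shown to converge uniformly in $\omega$ on compacts, the Stolz--Ces\`{a}ro argument of Corollary~\ref{corollary1} delivers the stated convergence of $\sum_{k=0}^{n}p_k^2(\omega)\big/\sum_{k=0}^{n}1/\g{k}$ to a positive finite limit $L(\omega)$.

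The first step is to confirm that the hypothesis $|\rho|<2$, where $\rho=\LIM{n}\beta_n/\g{n}$, is exactly what keeps the geometric picture of Figure~\ref{small-plot} alive: as in \eqref{condit} and the discussion following it, $|\rho|<2$ forces eventually $|\Im(\zt{n})|>|\ztt{n}|$, hence $\arg(\zt{n}+\ztt{n}{\e}^{-2\ii\Te{n-1}})>0$ and the identification \eqref{td}, $\del{n}=\arg(\zt{n}+\ztt{n}{\e}^{-2\ii\Te{n-1}})$, together with $\del{n}\to 0$ and $\Te{n}\to\infty$, still holds. Next I would redo Lemmas~\ref{basicn}--\ref{F/G}: asymptotic expansions of $\zt{n},\ztt{n}$ and of $\zt{n-1},\ztt{n-1}$, then of the logarithms, then of $H_n(t)=F_n(t)/G_n(t)$, now in powers of $1/\g{2n-1}$ but with coefficients depending also on $\rho$ and on the fluctuation $\beta_n/\g{n}-\rho$. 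The crucial requirement is that the telescoping sum \eqref{telescope} and the pairing of $\ln\mult{j-1}$ with $\ln\mult{j}$ still eliminate every term that is only $\OO{|\s{n}|/\g{n}^2}$-or-$\OO{|\ds{n}|/\g{n}}$ summable but not absolutely summable. After that, $\mathcal{S}_n^\ast$ again reduces to Riemann sums for damped exponentials $c_m^j{\e}^{\ii 2m\Te{j-1}}(\del{j-1}+\del{j})$, now with $c_m^j=c_m(\omega/\g{2j-1},\rho)$; one re-proves the analogue of Lemma~\ref{main2} — parity/sign structure, almost-monotonicity of $|c_m^j|$ in $\g{2j-1}$, and the decay $|c_m^j|=\OO{\g{2j-1}^{-|m|}}$ — by the contour/residue computation of Section~\ref{fsh}, tracking how the branch cuts and the pole $pl_1$ of $h^\ast$ move with $\rho$ and verifying that the cut still lies inside a disc of radius $\OO{(\omega/\g{2j-1})^2}$, so the high-frequency tail is absorbed by condition~\ref{c8}. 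The error terms $\mathcal{E}_m(n)$ are then controlled by the same bootstrap, reducing to Riemann sums whose Fourier coefficients shrink by a factor $\OO{\g{2j-1}^{-2}}$ per iteration until condition~\ref{c8} applies.

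The step I expect to be the main obstacle is precisely this cancellation of non-absolutely-summable terms once $\beta_n\neq 0$. In the symmetric case those terms were built only from $\s{n}$'s and $\ds{n}$'s, and they cancelled because of the specific algebra relating $\zt{n}+\ztt{n}{\e}^{-\ii t}$ to $\zt{n-1}-\overline{\ztt{n-1}}{\e}^{\ii t}$; with $\beta_n$ present there will be extra terms proportional to $(\beta_{2n-1}+\beta_{2n-2})/\g{2n-1}$ and to first and second forward differences of $\beta_n$, and the conjecture assumes only that $\beta_n/\g{n}$ \emph{converges}, which does not by itself make such terms summable. So the analysis will likely force one either to impose on $\beta_n$ analogues of conditions \ref{c5}--\ref{c6} — yielding a theorem rather than the full conjecture — or to find a new mechanism (a second telescoping tuned to $\beta_n$, or a substitution absorbing $\beta_n$ into a shifted effective frequency) that produces the needed cancellation from $\beta_n/\g{n}\to\rho$ alone. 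A secondary difficulty is bookkeeping: every estimate now carries the extra parameter $\rho$, and one must check that uniformity in $\omega$ on compact sets survives, and in particular that the constants do not blow up as $|\rho|\uparrow 2$, where the configuration in Figure~\ref{small-plot} degenerates.
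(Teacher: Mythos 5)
This statement is stated in the paper as a \emph{conjecture}: the paper offers no proof of it, only numerical evidence, so there is no argument of the author's to compare yours against. Your proposal, as you yourself acknowledge, is a programme rather than a proof: the decisive step --- showing that, once $\beta_n\neq 0$, the telescoping \eqref{telescope} and the pairing of $\ln\mult{j-1}$ with $\ln\mult{j}$ still cancel all terms that are not absolutely summable --- is left open, and you concede that with only $\beta_n/\g{n}\rightarrow\rho$ in hand you may be forced to impose analogues of \ref{c5}--\ref{c6} on $\beta_n$, which would establish a weaker theorem, not the conjecture as stated. So the gap is not a technicality; it is exactly the point that keeps the statement a conjecture.

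There is also a structural problem with the reduction you choose. You propose to reduce the claim, ``exactly as in the symmetric case,'' to convergence of $\mathcal{S}_n$, i.e.\ of $\bigl(p_{2n}^2(\omega)+p_{2n+1}^2(\omega)\bigr)\big/\bigl(\tfrac{1}{\g{2n}}+\tfrac{1}{\g{2n+1}}\bigr)$, and then invoke Stolz--Ces\`{a}ro. But the paper's own numerical observations for the non-symmetric case indicate that $\g{n}\bigl(p_n^2(\omega)+p_{n+1}^2(\omega)\bigr)$ does \emph{not} converge when $\beta_n\not\equiv 0$: it appears to oscillate around $L(\omega)$ between two envelopes tending to horizontal lines, while only the averaged ratio $\sum_{k=0}^{n}p_k^2(\omega)\big/\sum_{k=0}^{n}1/\g{k}$ converges. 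Hence proving convergence of $\mathcal{S}_n$ is very likely impossible here, and any successful attack must instead control the oscillatory part --- for instance by showing that the non-decaying oscillation in $\ln\mult{j}$ is asymptotically mean-zero along the sampling points $2\Te{j-1}$, so that it is averaged out in the Ces\`{a}ro sum rather than eliminated termwise. Your proposal contains no mechanism of this kind, so even apart from the unresolved cancellation of the $\beta$-difference terms, the route as described aims at a statement stronger than the conjecture and apparently false.
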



\bibliographystyle{amsplain}

\begin{thebibliography}{99}

\bibitem{GF} G. Freud, Orthogonal polynomials, Pergamon Press, 1971.

\bibitem{DKMVZ} P. Deift, T. Kriecherbauer, K.T.-R. McLaughlin, S. Venekides, X. Zhou, A Riemann--Hilbert approach to asymptotic questions for orthogonal polynomials, J. Comput. Appl. Math., 133 (2001) 47-63.

\bibitem{PN} P. Nevai, Orthogonal Polynomials, Mem. Amer. Math. Soc. 213, (1979).

\bibitem{I1} A. Ignjatovic, Chromatic derivatives, chromatic expansions and associated function spaces, East Journal on Approximations, Vol. 15, No. 2, (2009), pp. 263-302.

\bibitem{IGT} A. Ignjatovic,  Chromatic derivatives and local approximations, IEEE Trans. Signal Process., Volume 57, Issue 8, (2009), 2998-3007.

\bibitem{IGF} A. Ignjatovic,  Frequency estimation using time domain methods based on robust differential operators, Proceedings of the 10th IEEE International Conference on Signal Processing (ICSP), 26 -28 October, Beijing, China.

\bibitem{I2} A. Ignjatovic, Asymptotic behaviour of some families of orthonormal polynomials and an associated Hilbert space, UNSW Computer Science and Engineering Technical Report no. UNSW-CSE-TR-201421,  9 Sept. 2014;  arXiv:1410.2661, 10 Oct. 2014.

\bibitem{IL} A. Ignjatovic, D. Lubinsky,  On an asymptotic equality for reproducing kernels and sums of squares of orthonormal polynomials, Technical Report UNSW-CSE-TR-201518, 14 Nov. 2015, to appear in the special volume on approximation theory in the memory of Q.I. Rahman, to be published by Springer.

\bibitem{ST} G.  \'{S}widerski, B.  Trojan, Strong non-subordinacy of a class of unbounded Jacobi matrices,  arXiv:1602.06273v1, 19 Feb. 2016.

\bibitem{S} G.  \'{S}widerski, The measure for orthogonal polynomials in unbounded settings, arXiv:1602.06728, 22 Feb. 2016.



\end{thebibliography}

\end{document}